\documentclass[11pt]{article}

\usepackage[T1]{fontenc}
\usepackage{lmodern}
\usepackage[margin=1in]{geometry}
\usepackage{amsmath,amssymb,amsthm,mathtools}
\usepackage{microtype}
\usepackage[hidelinks]{hyperref}

\newtheorem{theorem}{Theorem}[section]
\newtheorem{proposition}[theorem]{Proposition}
\newtheorem{lemma}[theorem]{Lemma}
\newtheorem{corollary}[theorem]{Corollary}
\theoremstyle{definition}
\newtheorem{definition}[theorem]{Definition}

\newcommand{\Sphere}{\mathbb S}
\newcommand{\R}{\mathbb R}
\newcommand{\K}{\mathcal K}
\newcommand{\C}{\mathcal C}
\newcommand{\sgn}{\operatorname{sgn}}
\newcommand{\pos}{\operatorname{pos}}
\newcommand{\Susp}{\operatorname{Susp}}
\newcommand{\indicator}{\mathbf{1}}
\newcommand{\dd}{\,\mathrm d}

\hypersetup{
  pdftitle={The Spherical Hadwiger Theorem},
  pdfauthor={Suijie Wang and Shengguo Wu},
  pdfsubject={Geometric valuations and spherical convexity},
  pdfkeywords={geometric valuations, spherical convexity, spherical Hadwiger theorem,
    spherical intrinsic volumes, conic valuations, conic intrinsic volumes}
}

\title{The Spherical Hadwiger Theorem}
\author{Suijie Wang \qquad Shengguo Wu\\
School of Mathematics, Hunan University\\
Changsha 410082, Hunan, P. R. China\\
\texttt{wangsuijie@hnu.edu.cn}\qquad
\texttt{2072704745@hnu.edu.cn}}
\date{}

\begin{document}

\maketitle

\begin{abstract}
We prove the spherical Hadwiger classification in every dimension.  For
\(n\geq1\), every continuous \(SO(n+1)\)-invariant valuation on the
space of all closed spherical convex sets in \(\Sphere^n\) can be
written uniquely as a linear combination of the spherical intrinsic
volumes \(V_0,\ldots,V_n\).  The same classification holds when the
domain is restricted to sets contained in an open hemisphere.  The
proof is inductive: it reduces the problem to a vanishing statement for
simple valuations and uses a continuous alternating cocycle on tuples
of spherical points together with a signed coning transform.  Through
the cone--sphere correspondence, this yields, for \(d\geq2\), the
corresponding classification of continuous \(SO(d)\)-invariant conic
valuations on all closed convex cones in \(\R^d\), without imposing
normalization at the zero cone.  In particular, \(SO\)-invariance
implies \(O\)-invariance in both settings.
\end{abstract}

\noindent\textbf{2020 Mathematics Subject Classification.}
Primary 52B45; Secondary 52A55, 53C65.

\noindent\textbf{Keywords.}
Geometric valuations; spherical convexity; spherical Hadwiger theorem;
spherical intrinsic volumes; conic valuations; conic intrinsic volumes.

\section{Introduction}

For \(d\geq1\), Hadwiger's classical theorem states that every continuous valuation on
the nonempty compact convex subsets of \(\R^d\) that is invariant under
translations and \(SO(d)\) is a linear combination of the intrinsic
volumes \cite{Hadwiger,KlainRota,ChenHadwiger}.  It has motivated extensive work on
invariant valuations in other homogeneous geometries; see, for example,
\cite{SchneiderWeil,KlainHyperbolic,BernigFuSolanes}.  The spherical Hadwiger problem
asks for the analogous classification of continuous valuations on
spherical convex sets.  We establish this classification in every
dimension, assuming only invariance under the special orthogonal group.

Spherical convexity is connected to conic geometry through the
cone--sphere correspondence.  For a nonempty set \(K\subset\Sphere^n\),
define its radial extension by
\[
 \widehat K:=\{r x:r\geq0,\ x\in K\}.
\]
Let \(\K(\Sphere^n)\) denote the empty set together with all closed
\(K\subset\Sphere^n\) for which \(\widehat K\) is convex.  We equip its
nonempty part with the Hausdorff topology induced by the spherical
geodesic distance and declare the empty set to be isolated.  A valuation
\(\mu:\K(\Sphere^n)\to\R\) satisfies \(\mu(\emptyset)=0\) and
\[
 \mu(K)+\mu(L)=\mu(K\cup L)+\mu(K\cap L)
\]
whenever \(K,L,K\cup L\in\K(\Sphere^n)\).  Let
\(v_0,\ldots,v_{n+1}\) be the conic intrinsic volumes in \(\R^{n+1}\),
and for nonempty \(K\in\K(\Sphere^n)\) set
\[
 V_j(K)=v_{j+1}(\widehat K)\qquad(0\leq j\leq n),
\]
with \(V_j(\emptyset)=0\).  Our main result is the following.

\begin{theorem}[Spherical Hadwiger theorem]\label{thm:main}
Let \(n\geq1\), and let
\(\mu:\K(\Sphere^n)\longrightarrow\R\) be a valuation that is
continuous in the spherical Hausdorff topology
and invariant under \(SO(n+1)\).  Then \(\mu\) has the unique representation
\[
 \mu=\sum_{j=0}^{n}\mu(S_j)V_j
 \qquad\text{on }\K(\Sphere^n).
\]
Here \(S_j\) is any great \(j\)-sphere in \(\Sphere^n\), with
\(S_0\) an antipodal pair and \(S_n=\Sphere^n\).
\end{theorem}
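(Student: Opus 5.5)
We argue by induction on \(n\), using the standard reduction of Hadwiger-type results to a vanishing statement for simple valuations. Uniqueness comes first and is easy: \(V_j(S_k)=\delta_{jk}\), since the radial extension of a great \(k\)-sphere is a \((k+1)\)-dimensional linear subspace, whose only nonzero conic intrinsic volume is \(v_{k+1}=1\). Hence the coefficients in any representation are forced to equal \(\mu(S_j)\). For existence, we replace \(\mu\) by
\[
\nu=\mu-\sum_{j=0}^n\mu(S_j)V_j .
\]
This is a continuous \(SO(n+1)\)-invariant valuation with \(\nu(S_j)=0\) for all \(j\), and the goal is to show \(\nu\equiv0\).

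The first step is to show that \(\nu\) vanishes on all sets of dimension \(<n\), meaning sets contained in a great \((n-1)\)-sphere \(S\). The restriction of \(\nu\) to \(\K(S)\) is a continuous valuation. It is invariant under the stabilizer of \(S\) in \(SO(n+1)\), and this stabilizer acts on \(S\cong\Sphere^{n-1}\) as the full \(O(n)\), because a reflection of \(S\) extends to a rotation of \(\Sphere^n\) by flipping the normal direction. So the induction hypothesis applies on \(S\) and gives \(\nu|_{\K(S)}=\sum_{j<n}\nu(S_j)V_j=0\). The base case \(n=1\) is handled directly: the convex sets in \(\Sphere^1\) are points, arcs, antipodal pairs and the whole circle. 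The value on an arc of length \(\theta\) is an additive continuous function of \(\theta\) on \([0,\pi]\), hence linear, and together with the values on points, antipodal pairs, semicircles and the circle this forces the claim. One must also track the non-pointed sets, that is, sets whose cone contains a line, such as lunes \(S_k*C\). These have the form \(S_k\ast C\) with \(C\) lying in the orthogonal great sphere, so they are either lower-dimensional or reduce to a pointed case. The conclusion is that \(\nu\) is a simple valuation, vanishing on lower-dimensional sets.

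The core of the proof is the claim that a continuous simple \(SO(n+1)\)-invariant valuation \(\nu\) with \(\nu(\Sphere^n)=0\) vanishes identically. Following the abstract, we first use inclusion–exclusion, available because \(\nu\) is simple, to reduce to spherical simplices \(\langle x_0,\dots,x_n\rangle\) with \(x_i\) in general position. Define
\[
f(x_0,\dots,x_n)=\sgn\det(x_0,\dots,x_n)\,\nu(\langle x_0,\dots,x_n\rangle),
\]
extended by \(0\) on degenerate tuples. Simplicity makes \(f\) continuous and alternating. Coning from an extra point \(y\) and applying additivity of \(\nu\) to the subdivision of a simplex gives the cocycle identity
\[
\sum_i(-1)^i f(x_0,\dots,\hat x_i,\dots,x_{n+1})=0
\]
for all tuples. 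Such a cocycle is a coboundary: \(f=\delta g\) with
\[
g(x_1,\dots,x_n)=\int_{\Sphere^n} f(y,x_1,\dots,x_n)\dd y ,
\]
up to normalization, and this \(g\) is the signed coning transform. Then \(g\) is an \(SO(n+1)\)-invariant alternating function of \(n\) points. The integral over \(y\) of the signed simplex measure pairs each \(y\) with its antipode \(-y\), which flips the sign of the determinant; using this antipodal symmetry and the normalization \(\nu(\Sphere^n)=0\), we get \(g\) expressed through \(\nu\) on hemispherical cones, where it vanishes. Alternatively, we show that \(g\) factors through a simple valuation in dimension \(n-1\), to which the induction hypothesis applies.

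The main obstacle is this last step. One must prove that the cocycle \(f\) is actually zero rather than just a coboundary, and handle the sign conventions together with continuity at degenerate tuples, where the determinant changes sign. Continuity there relies on \(\nu\) vanishing on lower-dimensional sets. Finally, the version for sets contained in an open hemisphere follows from the same argument, since all simplices used in the argument lie in open hemispheres.
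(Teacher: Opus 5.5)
Your reduction to a simple valuation is sound. Applying the full \((n-1)\)-dimensional theorem on each great hypersphere, non-proper subsets included, is a legitimate variant of the paper's route through \((\mathrm H_{n-1})\) and Lemma~\ref{lem:algebraic-extension}. Your skeleton (signed simplex cochain, cocycle, averaged primitive \(g\), vanishing of \(g\)) is the paper's. \textbf{The genuine gap is the decisive step \(g=0\), which you leave open.}

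Your antipodal pairing does not prove it. For independent \(x_1,\ldots,x_n\), let \(E=\Sphere(W)\) be the great hypersphere they span, \(n_E\) a suitable normal, and \(Q\) the base simplex. The substitution \(y\mapsto -y\) only yields
\[
 2g(x_1,\ldots,x_n)=\int_{\Sphere^n}\sgn\langle y,n_E\rangle
 \bigl(\nu(y*Q)-\nu((-y)*Q)\bigr)\dd\sigma_n(y).
\]
Additivity, however, controls only the sum \(\nu(y*Q)+\nu((-y)*Q)=\nu\bigl((\widehat Q+\R y)\cap\Sphere^n\bigr)\). So no hemispheres appear, and if \(\nu\) vanishes on non-proper sets the identity is a tautology. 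Pairing \(y\) with its mirror image in \(E\) would kill \(g\) at once, but that needs \(O(n+1)\)-invariance, which is exactly what is not assumed.

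The missing argument is the paper's Section~\ref{sec:transform}. For fixed \(E\), the map \(Q\mapsto\int_{\Sphere^n\setminus E}\sgn\langle y,n_E\rangle\,\nu(y*Q)\dd\sigma_n(y)\) is a continuous, simple, \(SO(W)\)-invariant valuation on \(\K(E)\). The valuation property holds because coning commutes with unions and intersections, by uniqueness of the decomposition \(W\oplus\R y\). Continuity holds because coning is jointly continuous in apex and base. Crucially, its value at \(Q=E\) is zero, because every \(y*E\) is a closed hemisphere \(H\), and \(2\nu(H)=\nu(\Sphere^n)+\nu(\partial H)=0\) by invariance and simplicity. Only then does the \((n-1)\)-dimensional vanishing statement, which your induction hypothesis implies, give \(g=0\) and hence \(f=\delta g=0\). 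Your one-line ``alternatively'' names this idea but supplies none of it, in particular not the evaluation on the full great hypersphere.

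A second error: simplicity does not make \(f\) continuous, and vanishing on lower-dimensional sets is not enough at degenerate tuples. When the limiting generators positively span a line, the simplices can converge to a full-dimensional non-proper set. Examples are the semicircle in the example preceding Lemma~\ref{lem:degeneration}, and lunes or more general suspensions in higher dimensions. The paper first proves that \(\nu\) vanishes on every non-proper set, by applying the \((n-1)\)-dimensional vanishing statement to \(Q\mapsto\nu(\Susp_L Q)\) (Lemma~\ref{lem:suspension}, Corollary~\ref{cor:nonproper-zero}). Your remark that lunes ``reduce to a pointed case'' does not provide this.

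You could instead avoid continuity at degenerate tuples. Use the cocycle identity only for tuples in general position, which suffices almost everywhere in the averaging, and treat non-proper sets separately, for instance by coordinate cuts. But then you must say so.

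Finally, the cocycle identity is not a mere subdivision. When the \(n+2\) points positively span \(\R^{n+1}\), the signed simplices cover the whole sphere, so \(\nu(\Sphere^n)=0\) is used there. Passing from an indicator identity valid off finitely many great hyperspheres to an identity for \(\nu\) also requires a cutting argument such as Lemma~\ref{lem:finite-cuts}.
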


A closed spherical convex set is called
\emph{proper} if it is contained in an open hemisphere, and we write
\(\K_p(\Sphere^n)\) for the empty set together with all proper members of
\(\K(\Sphere^n)\).  The same classification holds on
\(\K_p(\Sphere^n)\), with unique continuous invariant extension to
\(\K(\Sphere^n)\); see Corollary~\ref{cor:spherical-form}.
The cone--sphere correspondence yields the classification on all
closed convex cones; see Corollary~\ref{cor:conic-form}.

According
to Schneider, the spherical Hadwiger problem was probably first
formulated, in an equivalent conic form, by McMullen in 1974
\cite[Section~3.3]{SchneiderCones}.  One formulation asks whether every
continuous \(SO(d)\)-invariant valuation on the class of closed spherical
convex sets in \(\Sphere^{d-1}\) is a linear combination of the spherical
intrinsic volumes.  Schneider also formulated the related question of
whether every increasing \(SO(d)\)-invariant valuation on proper spherical
convex bodies in \(\Sphere^{d-1}\) is a linear combination of the
spherical quermassintegrals with nonnegative coefficients
\cite[Section~3.3]{SchneiderCones}.  Both versions were recorded by Gruber and Schneider as Problem~49
\cite[Problem~49]{GruberSchneider}.  The continuous version was later recorded
by McMullen and Schneider \cite[Problem~15.5]{McMullenSchneider}.  The related
question whether every
continuous, simple, \(O(d)\)-invariant valuation is a multiple of
spherical volume was recorded by Gruber and Schneider as Problem~74
\cite[Problem~74]{GruberSchneider}.  The cases \(\Sphere^1\) and
\(\Sphere^2\) were known
\cite[Proposition~11.2.2 and Theorem~11.3.1]{KlainRota}.
The higher-dimensional simple case was also listed as open by Ludwig
\cite[Section~4]{LudwigValuationTheory}.  In higher
dimensions, the problem for arbitrary continuous valuations on all closed
spherical convex sets appears to have remained open in the literature
\cite[p.~263]{SchneiderWeil}.

Several partial results were known under additional hypotheses.
Schneider characterized spherical volume on spherical polytopes among
nonnegative \(SO(d)\)-invariant valuations that vanish on lower-dimensional
spherical sets \cite[Theorem~6.2]{SchneiderCurvature}.  Glasauer developed
the local integral geometry of spherically convex bodies
\cite{GlasauerThesis}.  Smooth invariant valuations on the sphere
were classified by Hack \cite[Theorem~4.1.3]{Hack}.  For the general
theory of smooth valuations on manifolds, see
\cite{AleskerManifoldsII}.
The classification under a localizability hypothesis follows from
Glasauer's characterization of spherical curvature measures
\cite[Satz~4.2.2]{GlasauerThesis}; see also
\cite[Theorem~6.5.4]{SchneiderWeil} and the discussion in
\cite[Section~5.6]{AmelunxenLotzMcCoyTropp}.  The combinatorics and
integral geometry of conic intrinsic volumes were developed further in
\cite{AmelunxenLotz,SchneiderCones}.  Related fan valuations and their
connection with spherical intrinsic volumes were studied in
\cite{BackmanManeckeSanyal}.

The proof proceeds by induction on dimension and reduces the
classification to a vanishing statement for continuous simple invariant
valuations \(\lambda\) satisfying \(\lambda(\Sphere^n)=0\); compare Klain
\cite[Section~2]{KlainHadwiger} for the analogous Euclidean reduction.
The lower-dimensional statement implies that \(\lambda\) vanishes on all
non-proper sets, which controls degenerations of spherical simplices and
yields a continuous alternating simplex cochain.  A signed cone identity
makes this cochain a cocycle.  Averaging produces an invariant primitive,
and a signed coning transform, together with the same lower-dimensional
vanishing statement, forces this primitive to vanish.  Finite cutting,
triangulation, and approximation then complete the induction.

Independently and essentially simultaneously, Knoerr
\cite{KnoerrSpherical} proved the corresponding classification for
spherical polytopes contained in an open hemisphere.  Approximation
gives the proper convex-body classification, and the uniqueness of
algebraic extension in Lemma~\ref{lem:algebraic-extension} then yields
the classification on all closed spherical convex sets.
His proof draws on Alesker's quasi-smooth valuation theory
\cite{AleskerManifoldsI} and uses \(\operatorname{GL}(n+1,\R)\)-smoothing,
gnomonic projection, and his Euclidean polytopal Hadwiger theorem
\cite{KnoerrEuclidean}.  The present paper gives an independent proof
based on control of degenerate simplices, a continuous simplex cocycle,
and a signed coning transform.

Subsequent work by Fu \cite{FuMonotone} and Lotz \cite{LotzMonotone}
gives monotone refinements of the spherical and conic classifications,
respectively, with automatic continuity and orthogonal invariance.
Their proofs develop the cochain and signed-coning argument and the
signed-simplex and averaging arguments of the present paper,
respectively.  Fu also obtains hyperbolic and elliptic analogues, while
Lotz gives a Grassmann-angle characterization on pointed cones that
settles a conjecture of McMullen.

\section{Spherical and conic preliminaries}\label{sec:preliminaries}

\subsection{Spherical convex sets and cones}

For every nonzero linear subspace \(W\subset\R^{n+1}\), write
\(\Sphere(W):=W\cap\Sphere^n\).  Let \(\K(\Sphere(W))\) be the family
of spherical links \(C\cap\Sphere(W)\), where \(C\subseteq W\) is a
closed convex cone, including the empty set corresponding to
\(C=\{0\}\).  Let \(\K_p(\Sphere(W))\) consist of the empty set and
the members contained in an open hemisphere of \(\Sphere(W)\).  When
\(E=\Sphere(W)\), these families are abbreviated to \(\K(E)\) and
\(\K_p(E)\).
Neither full-dimensionality nor containment in an open hemisphere is
required in \(\K(\Sphere(W))\).

For \(d\geq1\), let \(\C_d\) denote the set of all closed convex cones
in \(\R^d\).  The cone--sphere correspondence is the bijection
\[
 \C_d\longrightarrow\K(\Sphere^{d-1}),\qquad
 C\longmapsto
 \begin{cases}
  C\cap\Sphere^{d-1},&C\neq\{0\},\\
  \emptyset,&C=\{0\}.
 \end{cases}
\]
whose inverse is \(K\mapsto\widehat K\), with
\(\widehat\emptyset=\{0\}\).

A convex cone \(C\) is \emph{pointed} if
\(C\cap(-C)=\{0\}\).
For nonempty \(K\), the cone \(\widehat K\) is pointed if and only if
\(0\notin\operatorname{conv}K\), which, by strict separation
\cite[Chapter~1]{SchneiderConvexBodies}, is equivalent to \(K\) being contained
in an open hemisphere.

Define
\[
 \dim C:=\dim\operatorname{span}C\quad(C\in\C_d),\qquad
 \dim K:=\dim\widehat K-1
\]
for nonempty \(K\).  This convention is independent of the ambient
linear subspace.

For a subset \(A\) of a real vector space \(V\), write
\[
 \pos A:=\left\{\sum_{j=1}^m t_jx_j:
 m=0,1,2,\ldots,\ x_j\in A,\ t_j\geq0\right\},
\]
where the empty sum is zero; thus \(\pos\emptyset=\{0\}\).  Also write
\(\pos(w_1,\ldots,w_r):=\pos\{w_1,\ldots,w_r\}\).
A closed convex cone \(C\) in a finite-dimensional real inner-product
space \(V\) is \emph{polyhedral} if it is an intersection of finitely
many closed linear halfspaces; thus
\[
 C=\{x\in V:\langle u_i,x\rangle\geq0,\ 1\leq i\leq m\}
\]
for some \(m\geq0\) and \(u_1,\ldots,u_m\in V\), with the empty
intersection interpreted as \(V\).  By the finite-dimensional Minkowski--Weyl theorem
\cite[Chapter~1]{SchneiderConvexBodies}, this is equivalent to finite generation:
\[
 C=\pos(w_1,\ldots,w_r)
\]
for some \(r\geq0\) and \(w_1,\ldots,w_r\in V\).  Thus the definition
includes the zero cone, linear subspaces, and cones with nontrivial
lineality.  A member \(P\in\K(\Sphere(W))\) is a \emph{spherical
polytope} if its radial extension \(\widehat P\) is a polyhedral cone.
In particular, the empty set is the spherical polytope corresponding
to the zero cone.  Polyhedral cones are not assumed to be pointed, and
spherical polytopes are not assumed to be proper.

\subsection{Topologies, compactness, and approximation}

Recall that the spherical geodesic distance is given by
\[
 \operatorname{dist}_s(x,y)=\arccos\langle x,y\rangle
 \qquad(x,y\in\Sphere^n).
\]
The same notation is used on unit spheres in other Euclidean
dimensions.
Throughout, interiors of subsets of \(\Sphere^n\) are taken relative
to \(\Sphere^n\).  Unless stated otherwise, distances between points
or subsets of \(\Sphere^n\) are taken with respect to
\(\operatorname{dist}_s\) and the induced Hausdorff metric
\(d_s\).  For nonempty compact sets
\(A,B\subset\Sphere^n\),
\[
 d_s(A,B)
 :=\max\left\{
   \sup_{a\in A}\inf_{b\in B}\operatorname{dist}_s(a,b),
   \sup_{b\in B}\inf_{a\in A}\operatorname{dist}_s(a,b)
 \right\}.
\]
By an abuse of notation, for a nonempty compact set
\(A\subset\Sphere^n\), we also write
\[
\operatorname{dist}_s(x,A)
:=\inf_{a\in A}\operatorname{dist}_s(x,a).
\]
We equip each \(\K(\Sphere(W))\) with the Hausdorff topology on its
nonempty members and declare the empty set to be isolated.

For \(d\geq1\), set
\[
 \overline B_d:=\{x\in\R^d:\|x\|\leq1\}.
\]
For \(C,D\in\C_d\), define the conic Hausdorff distance by
\[
 d_c(C,D):=
 \max\left\{
 \sup_{x\in C\cap\overline B_d}\inf_{y\in D\cap\overline B_d}\|x-y\|,
 \sup_{y\in D\cap\overline B_d}\inf_{x\in C\cap\overline B_d}\|x-y\|
 \right\}.
\]
We equip \(\C_d\) with the topology induced by
\(d_c\).

\begin{lemma}\label{lem:link-homeomorphism}
For every \(d\geq1\), the cone--sphere correspondence is a homeomorphism
from the conic Hausdorff topology to the spherical Hausdorff topology,
with the zero cone and the empty set isolated.
\end{lemma}

\begin{proof}
It suffices to compare the metrics on nonzero cones.  For nonzero
\(C,D\in\C_d\), put
\[
 \theta=d_s
   (C\cap\Sphere^{d-1},D\cap\Sphere^{d-1}),
 \qquad
 h=d_c(C,D).
\]
A radial comparison gives
\[
 h\leq 2\sin(\theta/2),
 \qquad
 \theta\leq\arcsin h\quad(h<1).
\]
For the first estimate, the point \(x=0\) can be matched with \(0\).
Otherwise write \(x=ta\in C\cap\overline B_d\), where
\(0<t\leq1\) and \(a\in C\cap\Sphere^{d-1}\).  Choose
\(b\in D\cap\Sphere^{d-1}\) with
\(\operatorname{dist}_s(a,b)\leq\theta\).  Then
\(tb\in D\cap\overline B_d\) and
\[
 \|ta-tb\|
 =2t\sin\bigl(\operatorname{dist}_s(a,b)/2\bigr)
 \leq2\sin(\theta/2).
\]
The same argument with \(C,D\) interchanged proves the Hausdorff
estimate.
For the second estimate, given \(a\in C\cap\Sphere^{d-1}\), choose
\(y=tb\in D\cap\overline B_d\), where \(t\geq0\) and
\(b\in D\cap\Sphere^{d-1}\), such that \(\|a-y\|\leq h\).  If \(h<1\),
then \(t>0\), and \(\|a-tb\|<1\) forces \(\langle a,b\rangle>0\).  Hence
\(\operatorname{dist}_s(a,b)<\pi/2\), and
\[
 \sin \operatorname{dist}_s(a,b)
 =\min_{s\geq0}\|a-sb\|
 \leq\|a-y\|\leq h.
\]
The same argument with \(C,D\) interchanged gives the asserted
Hausdorff bound.
Thus the two metrics induce the same topology on nonzero cones.  The
zero cone is isolated because its conic Hausdorff distance from every
nonzero cone is \(1\).
\end{proof}

\begin{lemma}\label{lem:pointed-generator-continuity}
Let \(m\geq0\), and suppose that
\(p_i^{(r)},p_i\in\Sphere^n\) satisfy
\(p_i^{(r)}\to p_i\) for \(0\leq i\leq m\).  Set
\[
 C_r=\pos(p_0^{(r)},\ldots,p_m^{(r)}),
 \qquad
 C=\pos(p_0,\ldots,p_m).
\]
If \(C\) is pointed, then \(C_r\) is pointed for all sufficiently
large \(r\), and
\[
 C_r\cap\Sphere^n\longrightarrow C\cap\Sphere^n
\]
in the spherical Hausdorff metric.
\end{lemma}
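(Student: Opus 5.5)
Since \(C\) is pointed and closed, the plan is to find a uniform linear functional that is strictly positive on all generators. Concretely, \(C\cap(-C)=\{0\}\) together with the equivalence recorded above (pointed iff \(0\notin\operatorname{conv}\) of the spherical link) gives \(0\notin\operatorname{conv}\{p_0,\ldots,p_m\}\); this is a compact convex set. Strict separation then yields a unit vector \(u\) and \(\delta>0\) with \(\langle u,p_i\rangle\geq 2\delta\) for all \(i\). (If \(m\geq0\), the generator set is nonempty, so \(C\neq\{0\}\) and the links are nonempty.) Since \(p_i^{(r)}\to p_i\), for large \(r\) we have \(\langle u,p_i^{(r)}\rangle\geq\delta\) for all \(i\). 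Then every nonzero \(x\in C_r\) satisfies \(\langle u,x\rangle>0\), so \(C_r\) is pointed.

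For the Hausdorff convergence I will work with convex combinations rather than arbitrary nonnegative combinations, using the functional \(u\) for normalization. Every point of \(C\cap\Sphere^n\) has the form \(y/\|y\|\) with \(y=\sum_i\lambda_ip_i\), \(\lambda\) in the standard simplex \(\Delta_m\). Similarly every point of \(C_r\cap\Sphere^n\) has the form \(y_r/\|y_r\|\) with \(y_r=\sum_i\lambda_ip_i^{(r)}\). The key uniform bound is \(\|y\|\geq\langle u,y\rangle\geq2\delta\) and \(\|y_r\|\geq\delta\) for all \(\lambda\in\Delta_m\) and large \(r\). Moreover
\[
 \|y_r-y\|\leq\max_i\|p_i^{(r)}-p_i\|=:\varepsilon_r\to0
\]
uniformly in \(\lambda\).

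The normalization map \(z\mapsto z/\|z\|\) is Lipschitz with constant \(2/\delta\) on \(\{\|z\|\geq\delta\}\). Hence, for the same \(\lambda\), the Euclidean distance between \(y_r/\|y_r\|\) and \(y/\|y\|\) is at most \(2\varepsilon_r/\delta\). Since the same \(\lambda\) parametrizes both links surjectively, both one-sided Hausdorff deviations in the Euclidean (chordal) metric are at most \(2\varepsilon_r/\delta\). Chordal and geodesic distance on the sphere are related by \(\operatorname{dist}_s=2\arcsin(\|\cdot\|/2)\), so the spherical Hausdorff distance \(d_s\) tends to \(0\).

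The only substantive step is the first one: obtaining the uniform lower bound on \(\|y_r\|\) via the separating functional \(u\). Pointedness is used precisely there, to rule out cancellation among generators in the limit. Everything after that is a routine uniform-continuity estimate.
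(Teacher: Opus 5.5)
Your proof is correct and follows essentially the same route as the paper: a separating vector \(u\) with a uniform positive lower bound on the limit generators (hence, for large \(r\), on the perturbed generators, which also gives pointedness of \(C_r\)), the common parametrization of both links by normalized convex combinations over the simplex, and the estimate \(\|x/\|x\|-y/\|y\|\|\leq 2\|x-y\|/\|x\|\), which yields a uniform bound of order \(\max_i\|p_i^{(r)}-p_i\|\). The only cosmetic difference is that you obtain \(u\) by separating \(0\) from \(\operatorname{conv}\{p_0,\ldots,p_m\}\), whereas the paper phrases this step as the compact link of \(C\) lying in an open hemisphere.
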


\begin{proof}
Since \(C\) is pointed, its compact spherical link lies in an open
hemisphere.  Hence there are \(u\in\Sphere^n\) and \(\varepsilon>0\)
such that
\[
 \langle u,p_i\rangle\geq\varepsilon
 \qquad(0\leq i\leq m).
\]
Let
\[
 \Sigma_m=\{(t_0,\ldots,t_m):t_i\geq0,\ \textstyle\sum_i t_i=1\}
\]
and define
\[
 \begin{aligned}
 A_r(t)&=\sum_i t_i p_i^{(r)}, & A(t)&=\sum_i t_i p_i,\\
 F_r(t)&=\frac{A_r(t)}{\|A_r(t)\|}, &
 F(t)&=\frac{A(t)}{\|A(t)\|}.
 \end{aligned}
\]
Put
\(
 \eta_r:=\max_{0\leq i\leq m}\|p_i^{(r)}-p_i\|\to0.
\)
For all sufficiently large \(r\), \(\eta_r\leq\varepsilon/2\), and
for every \(t\in\Sigma_m\),
\[
 \langle u,A_r(t)\rangle\geq\varepsilon-\eta_r
 \geq\varepsilon/2,
 \qquad
 \langle u,A(t)\rangle\geq\varepsilon.
\]
Thus \(A_r(t)\) and \(A(t)\) never vanish, and the first inequality
also shows that \(C_r\) is pointed.  Moreover,
\[
 \|A_r(t)-A(t)\|\leq\eta_r.
\]
For nonzero vectors \(x,y\), the triangle and reverse triangle
inequalities give
\[
 \left\|\frac{x}{\|x\|}-\frac{y}{\|y\|}\right\|
 \leq\frac{2\|x-y\|}{\|x\|}.
\]
Therefore
\[
 \sup_{t\in\Sigma_m}\|F_r(t)-F(t)\|
 \leq\frac{4\eta_r}{\varepsilon}\longrightarrow0.
\]
Normalizing the coefficients of a nonzero positive combination gives
\[
 F_r(\Sigma_m)=C_r\cap\Sphere^n,
 \qquad
 F(\Sigma_m)=C\cap\Sphere^n.
\]
Hence these links converge in Euclidean Hausdorff distance.  The same
holds for the spherical Hausdorff metric because
\(
 \|a-b\|=2\sin(\operatorname{dist}_s(a,b)/2)
\)
on the unit sphere.
\end{proof}

\begin{lemma}\label{lem:spherical-hyperspace-compactness}
For every nonzero linear subspace \(W\subset\R^{n+1}\), the space
\(\K(\Sphere(W))\), with the empty set isolated, is compact.
\end{lemma}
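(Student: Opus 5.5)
The approach is to show that the nonempty members of \(\K(\Sphere(W))\) form a closed subset of the hyperspace of nonempty compact subsets of the compact metric space \(\Sphere(W)\). Compactness of that hyperspace in the Hausdorff metric is the Blaschke selection theorem (or the classical compactness of the hyperspace of a compact metric space). The empty set is isolated, so adding it as one extra point keeps the space compact. Concretely, given a sequence \(K_r\) in \(\K(\Sphere(W))\), either infinitely many \(K_r\) are empty, and then the empty set is a limit of a subsequence, or we pass to a subsequence of nonempty sets. By the hyperspace compactness, a further subsequence converges in \(d_s\) to a nonempty compact set \(K\subset\Sphere(W)\). It then suffices to show \(K\in\K(\Sphere(W))\), that is, that \(\widehat K\) is a closed convex cone.

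Closedness of \(\widehat K\) is routine, since \(K\) is compact and avoids the origin. For example, if \(t_jx_j\to y\) with \(x_j\in K\), then either \(y=0\), or \(t_j\to\|y\|\) and a subsequence of the \(x_j\) converges in \(K\). For convexity it suffices to show that \(\widehat K\) is closed under addition. So let \(a,b\in K\) with \(a+b\neq0\); we must show \((a+b)/\|a+b\|\in K\). Choose \(a_r,b_r\in K_r\) with \(a_r\to a\) and \(b_r\to b\), which is possible by Hausdorff convergence. Then \(a_r+b_r\in\widehat{K_r}\) by convexity, and \(a_r+b_r\neq0\) for large \(r\). Hence \(c_r:=(a_r+b_r)/\|a_r+b_r\|\in K_r\) and \(c_r\to(a+b)/\|a+b\|\). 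Since \(\operatorname{dist}_s(c_r,K)\le d_s(K_r,K)\to0\) and \(K\) is closed, the limit lies in \(K\). The case \(a+b=0\) needs nothing, because \(0\in\widehat K\) already.

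An alternative route is to use Lemma~\ref{lem:link-homeomorphism}: it would suffice to show that \(\C_d\), restricted to cones in \(W\), is compact under \(d_c\). One would use the compactness of the hyperspace of \(\overline B_d\) together with closedness of the class of sets of the form \(C\cap\overline B_d\). I will take the direct spherical route instead.

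The only point needing care is the subtlety that \(a+b\) may vanish while \(a_r+b_r\) does not. This is harmless, because that case requires no conclusion. I expect no real obstacle beyond quoting the compactness of the hyperspace of a compact metric space.
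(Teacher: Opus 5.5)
Your proposal is correct and is essentially the paper's proof. You show that the nonempty members of \(\K(\Sphere(W))\) are closed in the compact hyperspace of \(\Sphere(W)\): approximate a positive combination of points of the limit \(K\) by the same combination of approximating points in \(K_r\), normalize, and pass to the limit; then check closedness of \(\widehat K\) and adjoin the isolated empty set. The one slip is that closure of \(\widehat K\) under addition concerns sums \(sa+tb\) with \(s,t\geq0\), not only \(a+b\). Either run your argument with \(sa_r+tb_r\), which is what the paper does with arbitrary nonnegative coefficients, or note that repeated angular bisection together with closedness of \(K\) gives the whole arc from \(a\) to \(b\).
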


\begin{proof}
Since \(\Sphere(W)\) is compact, the hyperspace of its nonempty compact
subsets, endowed with the Hausdorff metric induced by
\(\operatorname{dist}_s\), is compact.
We show that the nonempty part of \(\K(\Sphere(W))\) is closed in this
hyperspace.  Let \(K_r\in\K(\Sphere(W))\) be nonempty and converge to
\(K\).  Given \(u=\sum_{i=1}^m a_ix_i\neq0\), with \(a_i\geq0\) and
\(x_i\in K\), choose \(x_{i,r}\in K_r\) tending to \(x_i\).  For all
sufficiently large \(r\), the sum \(u_r:=\sum_i a_ix_{i,r}\) is
nonzero, and convexity of \(\widehat K_r\) gives
\[
 \frac{u_r}{\|u_r\|}\in K_r,
 \qquad
 \frac{u_r}{\|u_r\|}\longrightarrow\frac{u}{\|u\|}\in K.
\]
The membership of the limit follows from Hausdorff convergence.
Consequently, \(\widehat K=\pos K\) is convex.  It is closed as well:
if \(z_j\in\widehat K\) tends to \(z\neq0\), then
\(z_j/\|z_j\|\in K\) eventually and tends to \(z/\|z\|\in K\);
the zero vector already belongs to \(\widehat K\).
Thus \(K\in\K(\Sphere(W))\), proving closedness of its nonempty
part.  This part is therefore compact, and adjoining the isolated
empty set preserves compactness.
\end{proof}

\begin{lemma}\label{lem:polytope-density}
Spherical polytopes are dense in \(\K(\Sphere^n)\) for the spherical
Hausdorff topology.  Equivalently, polyhedral cones are dense among
all closed convex cones in the conic Hausdorff topology.
\end{lemma}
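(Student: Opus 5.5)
The plan is to approximate a given nonempty \(K\in\K(\Sphere^n)\) by spherical links of polyhedral cones built from finite nets, and to show Hausdorff convergence directly on the sphere. By Lemma~\ref{lem:link-homeomorphism}, the spherical and conic formulations are equivalent, so it suffices to treat the spherical one. The empty set is a spherical polytope by convention and is isolated, so only nonempty \(K\) matters.

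Fix \(\varepsilon>0\), choose a finite \(\varepsilon\)-net \(\{x_1,\ldots,x_m\}\subset K\), and set \(P=\pos(x_1,\ldots,x_m)\cap\Sphere^n\). This is a spherical polytope because finitely generated cones are polyhedral by Minkowski--Weyl. It satisfies \(P\subseteq K\), since \(\widehat K\) is a closed convex cone containing all the \(x_i\). Consequently every point of \(P\) lies at distance \(0\) from \(K\), and every point of \(K\) lies within \(\varepsilon\) of some \(x_i\in P\). Hence \(d_s(P,K)\le\varepsilon\).

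This argument is insensitive to properness: if \(\widehat K\) is not pointed, for instance a half-space or all of \(\R^{n+1}\), the generated cone \(\pos(x_i)\) may still be non-pointed, and that is allowed. Its link is still well defined and contained in \(K\). Lemma~\ref{lem:pointed-generator-continuity} is therefore not needed.

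The only point needing care, and the one I expect to be the main obstacle, is checking that the link \(\pos(x_1,\ldots,x_m)\cap\Sphere^n\) is nonempty and belongs to \(\K(\Sphere^n)\). It is nonempty because it contains the \(x_i\). It is a closed spherical set with convex radial extension because its radial extension is the closed convex cone \(\pos(x_1,\ldots,x_m)\); this cone is closed since finitely generated cones are closed.

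The equivalent conic statement then follows from Lemma~\ref{lem:link-homeomorphism}: the cone--sphere correspondence is a homeomorphism and sends polyhedral cones exactly to spherical polytopes. Hence density transfers between the two settings.
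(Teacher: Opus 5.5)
Your proposal is correct and follows the paper's proof essentially verbatim: you take a finite net \(F\subset K\) and sandwich the spherical polytope \(\pos F\cap\Sphere^n\) between \(F\) and \(K\), so its Hausdorff distance from \(K\) is at most the net radius. Your explicit transfer to the conic setting through Lemma~\ref{lem:link-homeomorphism} simply spells out the ``Equivalently'' that the paper leaves implicit.
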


\begin{proof}
The empty set is already a spherical polytope.  Let \(K\neq\emptyset\).
For each \(r\geq1\), choose a finite \(1/r\)-net \(F_r\subset K\) for
\(\operatorname{dist}_s\); that is, every \(x\in K\) satisfies
\(\operatorname{dist}_s(x,F_r)<1/r\).  Such a finite net exists because
\(K\) is compact.  Put \(C_r=\pos F_r\).  Then \(C_r\) is a nonzero
polyhedral cone and \(P_r=C_r\cap\Sphere^n\) is a spherical
polytope.  Since \(\widehat K\) is a convex cone and contains \(F_r\), we have
\(C_r\subset\widehat K\), and therefore
\[
 F_r\subset P_r\subset K,
\]
every point of \(K\) is within \(\operatorname{dist}_s\)-distance
\(1/r\) of \(P_r\), while every point of \(P_r\) lies in \(K\).  Thus
\(d_s(P_r,K)\leq1/r\), so \(P_r\to K\).
\end{proof}

\subsection{Valuations and intrinsic volumes}

If \(E=\Sphere(W)\), a map \(\mu:\K(E)\to\R\) is a valuation if
\(\mu(\emptyset)=0\) and
\[
 \mu(K)+\mu(L)=\mu(K\cup L)+\mu(K\cap L),
\]
whenever \(K,L,K\cup L\in\K(E)\).  The same definition applies with
\(\K(E)\) replaced by \(\K_p(E)\).  If \(\dim W=m+1\), a valuation on
\(\K(E)\) or \(\K_p(E)\) is called \emph{simple} if
\[
 \mu(K)=0
 \qquad\text{whenever }K\neq\emptyset
 \text{ and }\dim K\leq m-1.
\]

For a Euclidean subspace \(W\), write \(O(W)\) and \(SO(W)\) for its
orthogonal and special orthogonal groups.  A valuation on
\(\K(\Sphere(W))\) or \(\K_p(\Sphere(W))\) is \(G\)-invariant, for
\(G\leq O(W)\), if \(\mu(gK)=\mu(K)\) for all \(g\in G\) and all
sets \(K\) in its domain.

A map \(\Phi:\C_d\to\R\) is a
\emph{conic valuation} if
\[
\Phi(C)+\Phi(D)=\Phi(C\cup D)+\Phi(C\cap D),
\]
whenever \(C,D\in\C_d\) and \(C\cup D\) is convex.
Such a valuation is called \emph{normalized} if
\(\Phi(\{0\})=0\).

We recall a definition of the \emph{conic intrinsic volumes}
\(v_0,\ldots,v_d\).  For \(C\in\C_d\), let \(\Pi_C(x)\) be the
Euclidean projection of \(x\in\R^d\) onto \(C\), that is, the unique
point of \(C\) minimizing the distance to \(x\).  If \(C\) is
polyhedral, let \(\mathcal F_j(C)\) be the set of its nonempty
\(j\)-dimensional faces, including \(C\) itself when \(j=\dim C\).
For a standard Gaussian vector \(G\sim\mathcal N(0,I_d)\), define
\[
 v_j(C):=\sum_{F\in\mathcal F_j(C)}
 \mathbb P\{\Pi_C(G)\in\operatorname{relint}F\},
 \qquad 0\leq j\leq d,
\]
where \(\operatorname{relint}F\) denotes the interior of \(F\) in its
affine hull.  Thus \(v_j(C)\) is the probability that the projection
lies in the relative interior of a \(j\)-dimensional face;
see \cite[Section~2.2]{AmelunxenLotz}.

For an arbitrary \(C\in\C_d\), choose polyhedral cones \(C_r\to C\)
in the conic Hausdorff topology, as provided by
Lemma~\ref{lem:polytope-density}, and set
\[
 v_j(C):=\lim_{r\to\infty}v_j(C_r).
\]
The standard continuity theorem for conic intrinsic volumes ensures
that these limits exist, are independent of the approximating
sequence, and give continuous functions on \(\C_d\); see
\cite[Section~5.1, especially Definition~5.4]{AmelunxenLotzMcCoyTropp}.
The relative interiors of the faces partition a polyhedral cone, so
the probability definition and continuous extension give
\[
 v_j(C)\geq0,
 \qquad
 \sum_{j=0}^{d}v_j(C)=1
 \qquad(C\in\C_d).
\]

We will also use the following standard properties; see
\cite[Sections~2.3 and~4.2]{SchneiderCones}.
Each \(v_j\) is a conic valuation and satisfies
\(v_j(gC)=v_j(C)\) for all \(g\in O(d)\) and \(C\in\C_d\).  It is
intrinsic under isometric linear embeddings and vanishes on cones of
dimension less than \(j\).  For every \(k\)-dimensional linear subspace
\(L_k\subset\R^d\),
\[
 v_j(L_k)=\delta_{jk}
 \qquad(0\leq j,k\leq d).
\]
Here \(\delta_{jk}\) denotes the Kronecker delta.  In particular,
\(v_0(\{0\})=1\) and \(v_j(\{0\})=0\) for \(1\leq j\leq d\),
so \(v_1,\ldots,v_d\) are normalized conic valuations, whereas
\(v_0\) is not.
For a ray \(R\), the Gaussian projection lies at the origin or in
\(\operatorname{relint}R\), each with probability \(1/2\); hence
\(v_0(R)=v_1(R)=1/2\) and \(v_j(R)=0\) for \(j\geq2\).
Moreover, \(v_{\dim C}(C)>0\) for every nonzero cone \(C\).

Since \(\widehat\emptyset=\{0\}\), the cone--sphere correspondence
defines, for \(0\leq j\leq d-1\), a
continuous \(O(d)\)-invariant valuation
\(V_j:\K(\Sphere^{d-1})\to\R\) by
\[
 V_j(K):=v_{j+1}(\widehat K)
 \qquad(K\in\K(\Sphere^{d-1})).
\]
Equivalently, for every nonzero closed convex cone \(C\subset\R^d\),
\[
 V_j(C\cap\Sphere^{d-1})=v_{j+1}(C)
 \qquad(0\leq j\leq d-1).
\]
For clarity, in \(\Sphere^n\) this normalization gives
\[
 V_0(\{p\})=\tfrac12,
 \qquad
 V_j(S_k)=\delta_{jk}\quad(0\leq j,k\leq n),
 \qquad
 V_n(K)=\frac{\operatorname{vol}_n(K)}
 {\operatorname{vol}_n(\Sphere^n)},
\]
where \(S_k\) is any great \(k\)-sphere and \(\operatorname{vol}_n\)
denotes spherical \(n\)-dimensional volume.  The first identity follows
from the intrinsic volumes of a ray, and the second from those of a
linear subspace.  In particular, \(V_0\) is not the Euler characteristic
with its usual normalization.

\subsection{Finite cutting for simple valuations}

The relation between valuation extensions and linear relations among
indicator functions is classical; see
\cite[Theorem~1 and Section~5]{GroemerExtension}.  For extension to
finite unions in the spherical setting, see also
\cite[Theorem~1.3]{KlainHyperbolic}.

Write \(\indicator_A\) for the indicator function of a set \(A\).
If a spherical polytope \(P\) has \(\dim P=n\), a \emph{facet} of
\(P\) means \(G\cap\Sphere^n\), where \(G\) is a
codimension-one proper face of \(\widehat P\).  Its supporting great
hypersphere is \(\operatorname{span}(G)\cap\Sphere^n\).

\begin{lemma}[Finite cutting]\label{lem:finite-cuts}
Let \(\rho\) be a simple valuation on \(\K(\Sphere^n)\), and let
\(P_1,\ldots,P_m\) be spherical polytopes and
\(\alpha_1,\ldots,\alpha_m\in\R\).  If the pointwise identity
\[
 \sum_{i=1}^m\alpha_i\indicator_{P_i}=0
\]
holds at every point of
\(\Sphere^n\setminus\bigcup_{a=1}^r H_a\), for some finite list of
great hyperspheres \(H_1,\ldots,H_r\), then
\[
 \sum_{i=1}^m\alpha_i\rho(P_i)=0.
\]
\end{lemma}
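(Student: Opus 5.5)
Add to the list \(H_1,\ldots,H_r\) all supporting great hyperspheres of facets of the full-dimensional \(P_i\), together with a great hypersphere containing each lower-dimensional \(P_i\). Then decompose \(\Sphere^n\) along the arrangement \(\mathcal H=\{H_1,\ldots,H_r\}\). The closed chambers are the sets \(Q\cap\Sphere^n\), where \(Q\) is the closure of a connected component of \(\R^{n+1}\setminus\bigcup_a\widehat H_a\). Each \(\widehat Q\) is an intersection of closed linear halfspaces, so each chamber is an \(n\)-dimensional spherical polytope. Interiors of distinct chambers are disjoint, and chambers meet only inside \(\bigcup_a H_a\). If \(r=0\), the only chamber is \(\Sphere^n\) itself. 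No \(P_i\) is cut by a hypersphere: each full-dimensional \(P_i\) is the union of the chambers it contains, while each lower-dimensional \(P_i\) lies in some \(H_a\). On the open set \(\Sphere^n\setminus\bigcup_aH_a\), the indicator \(\indicator_{P_i}\) is therefore constant on each open chamber, equal to \(1\) on exactly those chambers contained in \(P_i\).

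Next I show that the simple valuation \(\rho\) is additive over this decomposition. For a full-dimensional spherical polytope \(P\) and one great hypersphere \(H\) with closed hemispheres \(H^\pm\), the sets \(P\cap H^+\), \(P\cap H^-\) and \(P\cap H\) all lie in \(\K(\Sphere^n)\), since they are intersections of closed convex cones. Their union is \(P\). The valuation property then gives
\[
\rho(P)=\rho(P\cap H^+)+\rho(P\cap H^-)-\rho(P\cap H).
\]
Here \(\rho(P\cap H)=0\) by simplicity, because \(\dim(P\cap H)\le n-1\), unless \(P\cap H\) is empty, in which case \(\rho\) vanishes as well. Cutting successively by \(H_1,\ldots,H_r\) and inducting on \(r\) gives
\[
\rho(P_i)=\sum_{Q\subseteq P_i}\rho(Q),
\]
the sum running over chambers \(Q\). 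Pieces of lower dimension contribute zero, and the full-dimensional pieces that remain at the end are exactly the chambers contained in \(P_i\). For lower-dimensional \(P_i\), \(\rho(P_i)=0\), which matches the empty sum over chambers.

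Finally, choose a point \(x_Q\) in the interior of each chamber \(Q\) that avoids \(\bigcup_aH_a\). Evaluating the hypothesis at \(x_Q\) gives
\[
\sum_i\alpha_i[Q\subseteq P_i]=0,
\]
since lower-dimensional \(P_i\) do not contain \(x_Q\). Summing over chambers yields
\[
\sum_i\alpha_i\rho(P_i)=\sum_Q\rho(Q)\sum_i\alpha_i[Q\subseteq P_i]=0.
\]

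The main obstacle is the bookkeeping in the iterated cutting. One must check that every intermediate piece is a legitimate member of \(\K(\Sphere^n)\), which holds because these pieces are intersections of closed convex cones. One must also check that the full-dimensional pieces remaining after all \(r\) cuts coincide exactly with the chambers contained in \(P_i\). This follows because each remaining piece is an intersection of \(P_i\) with one closed halfspace from every \(\widehat H_a\). Since the facet hyperspheres of \(P_i\) are in the list, such a piece equals a chamber when it is full-dimensional and is otherwise lower-dimensional. The remaining arguments are direct.
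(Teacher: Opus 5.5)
Your proposal is correct and follows essentially the same route as the paper: you enlarge the arrangement by the facet hyperspheres of the full-dimensional \(P_i\) and a hypersphere containing each lower-dimensional \(P_i\), cut successively using simplicity to write \(\rho(P_i)\) as the sum of \(\rho\) over the closed chambers contained in \(P_i\), and evaluate the indicator identity at an interior point of each chamber. The only difference is that the paper also adds the coordinate hyperspheres so that every closed chamber is proper; you can omit them, since \(\rho\) is defined on all of \(\K(\Sphere^n)\).
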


\begin{proof}
Let \(\mathcal H\) be the finite collection of great hyperspheres
consisting of the following four families (with duplicates removed):
\begin{enumerate}
\setlength{\itemsep}{0pt}
\setlength{\parsep}{0pt}
\item the exceptional hyperspheres \(H_1,\ldots,H_r\) from the hypothesis;
\item the supporting great hyperspheres of all facets of every
      full-dimensional \(P_i\);
\item one chosen great hypersphere \(L_i\supset P_i\) for each
      nonempty lower-dimensional \(P_i\);
\item the coordinate great hyperspheres
      \(\{x\in\Sphere^n:x_j=0\}\), \(j=1,\ldots,n+1\).
\end{enumerate}
Let \(\mathcal U\) denote the connected components of
\(\Sphere^n\setminus\bigcup_{H\in\mathcal H}H\), called open chambers.
For \(U\in\mathcal U\), choose for each \(H\in\mathcal H\)
a nonzero linear form \(\ell_H\) such that
\(H=\{x\in\Sphere^n:\ell_H(x)=0\}\) and \(\ell_H>0\) on \(U\).
Then
\[
 \begin{aligned}
 U&=\{x\in\Sphere^n:\ell_H(x)>0\text{ for every }H\in\mathcal H\},\\
 \overline U&=\{x\in\Sphere^n:\ell_H(x)\geq0\text{ for every }H\in\mathcal H\}.
 \end{aligned}
\]
The coordinate cuts place each \(\overline U\) in a closed orthant,
so it is a proper spherical polytope.

By construction, \(\partial P_i\subset\bigcup_{H\in\mathcal H}H\).
Thus each chamber \(U\) lies either in \(\operatorname{int}P_i\)
or outside \(P_i\), and the constant value of \(\indicator_{P_i}\)
on \(U\) is
\[
 \indicator_{P_i}(U)=
 \begin{cases}
  1,& U\subset\operatorname{int}P_i,\\
  0,& U\cap P_i=\emptyset.
\end{cases}
\]
In the first case, \(P_i\cap\overline U=\overline U\); in the
second, \(P_i\cap\overline U\subset\partial U\) is a convex set
with empty interior, so its value vanishes by simplicity.
For a great hypersphere \(H\) with closed hemispheres \(H^+,H^-\),
the valuation identity and simplicity give
\(\rho(A)=\rho(A\cap H^+)+\rho(A\cap H^-)\).
Cutting successively along \(\mathcal H\) therefore yields
\[
 \rho(P_i)
 =\sum_{U\in\mathcal U}\rho(P_i\cap\overline U)
 =\sum_{\substack{U\in\mathcal U\\
                  U\subset\operatorname{int}P_i}}
   \rho(\overline U).
 \tag{2.1}\label{eq:polytope-chamber-sum}
\]
Choices of closed hemispheres with no open chamber have empty
interior and contribute zero, which justifies the first equality.

The assumed indicator identity holds on every \(U\in\mathcal U\),
since the arrangement includes \(H_1,\ldots,H_r\).  Hence
\[
 \sum_{i=1}^m\alpha_i\indicator_{P_i}(U)
 =\sum_{\{i:U\subset\operatorname{int}P_i\}}\alpha_i=0.
\]
Substituting \eqref{eq:polytope-chamber-sum} and interchanging the
finite sums gives
\[
 \sum_{i=1}^m\alpha_i\rho(P_i)
 =\sum_{i=1}^m\alpha_i
   \sum_{\substack{U\in\mathcal U\\
                   U\subset\operatorname{int}P_i}}
       \rho(\overline U)
 =\sum_{U\in\mathcal U}
   \left(\sum_{\{i:U\subset\operatorname{int}P_i\}}\alpha_i\right)
       \rho(\overline U)
 =0.
\]
\end{proof}

\section{Extension to non-proper sets}\label{sec:extension}

An extension theorem from \(\K_p(\Sphere^n)\) to
\(\K(\Sphere^n)\), including continuity, is stated in
\cite[Proposition~2.1.2]{Hack}.  We give independent proofs of the
algebraic extension and, for simple valuations, of its continuity.

\subsection{Algebraic extension}\label{subsec:algebraic-extension}

Fix an orthonormal basis \(e_1,\ldots,e_{n+1}\), write
\(x_j=\langle x,e_j\rangle\), and set
\[
 \mathcal I:=\{-1,0,1\}^{n+1}\setminus\{0\}.
\]
For \(\tau=(\tau_1,\ldots,\tau_{n+1})\in\mathcal I\), let
\(z(\tau):=\#\{j:\tau_j=0\}\) and define
\[
 Q_\tau:=\{x\in\Sphere^n:\tau_jx_j\geq0\ \text{if }\tau_j\neq0,
 \quad x_j=0\ \text{if }\tau_j=0\}.
\]
Each \(Q_\tau\) is closed and spherical convex.  It is also proper:
for every \(x\in Q_\tau\),
\[
 \left\langle x,\sum_{j=1}^{n+1}\tau_je_j\right\rangle
 =\sum_{\{j:\tau_j\neq0\}}|x_j|>0,
\]
because \(x\) is a unit vector and its remaining coordinates vanish.
Thus \(K\cap Q_\tau\in\K_p(\Sphere^n)\) for every
\(K\in\K(\Sphere^n)\), including the case of an empty intersection.

\begin{lemma}\label{lem:coordinate-cut-formula}
Let \(\rho\) be a valuation on either \(\K_p(\Sphere^n)\) or
\(\K(\Sphere^n)\).  For every \(A\) in its domain,
\[
 \rho(A)=\sum_{\tau\in\mathcal I}
 (-1)^{z(\tau)}\rho(A\cap Q_\tau).
 \tag{3.1}\label{eq:coordinate-cut-formula}
\]
\end{lemma}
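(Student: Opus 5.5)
Assume that \(A\neq\emptyset\); if \(A=\emptyset\), every term in \eqref{eq:coordinate-cut-formula} is \(\rho(\emptyset)=0\). The plan is to cut \(A\) successively along the coordinate great hyperspheres \(H_j=\{x_j=0\}\), \(j=1,\ldots,n+1\), using only the valuation property and never simplicity. The basic step is this. For a set \(B\) in the domain and a fixed \(j\), write \(B^+=B\cap\{x_j\geq0\}\), \(B^-=B\cap\{x_j\leq0\}\) and \(B^0=B\cap\{x_j=0\}\). Each of these is again a closed set whose radial extension is convex, being the intersection of \(\widehat B\) with a closed halfspace or hyperplane. We have \(B^+\cup B^-=B\) and \(B^+\cap B^-=B^0\). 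The valuation identity therefore gives
\[
 \rho(B)=\rho(B^+)+\rho(B^-)-\rho(B^0).
\]
This holds whenever \(B^\pm\) and \(B\) lie in the domain. For \(\K(\Sphere^n)\) this is automatic. For \(\K_p(\Sphere^n)\), subsets of a proper set are proper (or empty), so it holds there as well. The empty cases are consistent because \(\rho(\emptyset)=0\).

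Apply this step with \(j=1\) to \(A\), then with \(j=2\) to each of the three pieces obtained, and so on up to \(j=n+1\). By induction on \(k\), after the first \(k\) cuts we get
\[
 \rho(A)=\sum_{\sigma\in\{-1,0,1\}^{k}}(-1)^{\#\{i\leq k:\sigma_i=0\}}\,
 \rho\bigl(A\cap Q^{(k)}_\sigma\bigr).
\]
Here \(Q^{(k)}_\sigma\) denotes the set cut out by the sign conditions \(\sigma_ix_i\geq0\) when \(\sigma_i\neq0\) and \(x_i=0\) when \(\sigma_i=0\), for \(i\leq k\). The sign \((-1)\) for each zero entry comes from the term \(-\rho(B^0)\). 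All intermediate sets lie in the domain, being intersections of \(A\) with closed polyhedral cones.

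At \(k=n+1\) the pieces are exactly \(A\cap Q_\tau\) with \(\tau\in\{-1,0,1\}^{n+1}\). The only extra index is \(\tau=0\), for which the piece is \(A\cap\{x=0\}\cap\Sphere^n=\emptyset\). That term contributes \(\rho(\emptyset)=0\), so the sum reduces to \(\tau\in\mathcal I\), which is \eqref{eq:coordinate-cut-formula}.

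There is no real obstacle. The only point that needs care is checking, at each cut, that \(B^+\), \(B^-\) and their union \(B\) all belong to the domain, so that the valuation identity may be applied. In particular one must note that in the \(\K_p\) case no non-proper set ever appears, which holds because each piece is a subset of the proper set \(A\).
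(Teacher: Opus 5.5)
Your proposal is correct and follows essentially the same route as the paper. You apply the three-term cutting identity \(\rho(B)=\rho(B\cap\{x_j\geq0\})+\rho(B\cap\{x_j\leq0\})-\rho(B\cap\{x_j=0\})\) successively for \(j=1,\ldots,n+1\); each zero-coordinate choice contributes a factor \(-1\), giving \((-1)^{z(\tau)}\), and the all-zero index contributes \(\rho(\emptyset)=0\). Your check that every piece stays in the domain, in particular that it remains proper or empty in the \(\K_p(\Sphere^n)\) case, matches the paper's remark to the same effect.
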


\begin{proof}
Set
\[
 H_j^\pm:=\{x\in\Sphere^n:\pm x_j\geq0\},
 \qquad H_j:=\{x\in\Sphere^n:x_j=0\}.
\]
For every \(A\) in the domain of \(\rho\), its intersections with
\(H_j^+,H_j^-,H_j\) remain in that domain, and the valuation identity
implies
\[
 \rho(A)=\rho(A\cap H_j^+)+\rho(A\cap H_j^-)-\rho(A\cap H_j).
\]
Applying this identity successively for \(j=1,\ldots,n+1\) gives
\eqref{eq:coordinate-cut-formula}: choosing the zero-coordinate term
at \(z(\tau)\) cuts gives the sign \((-1)^{z(\tau)}\), while the
all-zero choice has empty spherical intersection and contributes zero.
\end{proof}

\begin{lemma}[Algebraic extension]\label{lem:algebraic-extension}
Every valuation \(\mu:\K_p(\Sphere^n)\to\R\) has a unique valuation
extension \(\bar\mu:\K(\Sphere^n)\to\R\).  Moreover,
\(SO(n+1)\)-invariance and simplicity pass from \(\mu\) to \(\bar\mu\).
\end{lemma}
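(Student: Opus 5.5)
The natural definition of the extension comes from Lemma~\ref{lem:coordinate-cut-formula}. For \(K\in\K(\Sphere^n)\) I will set
\[
 \bar\mu(K):=\sum_{\tau\in\mathcal I}(-1)^{z(\tau)}\mu(K\cap Q_\tau).
\]
This is well defined because every \(K\cap Q_\tau\) lies in \(\K_p(\Sphere^n)\).

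\emph{Uniqueness.} Any valuation extension \(\nu\) of \(\mu\) to \(\K(\Sphere^n)\) satisfies \eqref{eq:coordinate-cut-formula} by Lemma~\ref{lem:coordinate-cut-formula} applied to \(\nu\). The right-hand side involves only values on proper sets, so \(\nu=\bar\mu\).

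\emph{Extension property.} For proper \(K\), Lemma~\ref{lem:coordinate-cut-formula} applied to \(\mu\) on \(\K_p(\Sphere^n)\) gives \(\bar\mu(K)=\mu(K)\). Also \(\bar\mu(\emptyset)=0\).

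\emph{Valuation property.} Let \(K,L,K\cup L\in\K(\Sphere^n)\). For each \(\tau\), the sets \(K\cap Q_\tau\), \(L\cap Q_\tau\), their union \((K\cup L)\cap Q_\tau\), and their intersection are all closed and spherically convex, since \(\widehat{(K\cup L)}\cap\widehat{Q_\tau}\) is a convex cone. They are also proper, being contained in \(Q_\tau\). Hence the valuation identity for \(\mu\) holds termwise, and summing with signs gives it for \(\bar\mu\).

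\emph{Simplicity.} If \(\dim K\le n-1\), then each \(K\cap Q_\tau\) is empty or has dimension at most \(n-1\), so every term vanishes.

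\emph{\(SO(n+1)\)-invariance.} This is the step I expect to need care, because the formula depends on the chosen basis. The argument I will use is the following. For \(g\in SO(n+1)\), define \(\nu(K):=\bar\mu(gK)\). This is a valuation on \(\K(\Sphere^n)\), since \(g\) preserves \(\K(\Sphere^n)\), unions and intersections. For proper \(K\), the set \(gK\) is proper, so
\[
 \nu(K)=\mu(gK)=\mu(K)
\]
by invariance of \(\mu\). Thus \(\nu\) is a valuation extension of \(\mu\), and by uniqueness \(\nu=\bar\mu\). Reusing uniqueness in this way avoids comparing the cut formulas for two different orthonormal bases.

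The only remaining check is that all the sets involved stay inside \(\K_p(\Sphere^n)\). This follows from the properness of \(Q_\tau\) established just before Lemma~\ref{lem:coordinate-cut-formula}.
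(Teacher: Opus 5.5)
Your proposal is correct and follows essentially the same route as the paper. You define \(\bar\mu\) by the signed coordinate-cut formula and obtain the extension property and uniqueness from Lemma~\ref{lem:coordinate-cut-formula} applied to \(\mu\) and to an arbitrary extension; you then check the valuation identity and simplicity termwise, and deduce \(SO(n+1)\)-invariance by applying uniqueness to \(K\mapsto\bar\mu(gK)\).
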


\begin{proof}
Define
\[
 \bar\mu(K):=\sum_{\tau\in\mathcal I}
 (-1)^{z(\tau)}\mu(K\cap Q_\tau)
 \qquad(K\in\K(\Sphere^n)).
 \tag{3.2}\label{eq:extension-formula}
\]
All terms are defined because \(K\cap Q_\tau\) is proper or empty.
Fix \(\tau\in\mathcal I\), and let
\(K,L,K\cup L\in\K(\Sphere^n)\).  Intersection with \(Q_\tau\)
preserves unions and intersections, so the valuation identity for
\(\mu\) gives
\[
 \mu(K\cap Q_\tau)+\mu(L\cap Q_\tau)
 =\mu((K\cup L)\cap Q_\tau)+\mu((K\cap L)\cap Q_\tau).
\]
Thus each map \(K\mapsto\mu(K\cap Q_\tau)\) is a valuation on
\(\K(\Sphere^n)\).  Their finite linear combination
\eqref{eq:extension-formula} is a valuation as well, and
\(\bar\mu(\emptyset)=0\).

For \(A\in\K_p(\Sphere^n)\), Lemma~\ref{lem:coordinate-cut-formula}
with \(\rho=\mu\) gives \(\bar\mu(A)=\mu(A)\), so \(\bar\mu\) is
an extension.  If \(\widetilde\mu\) is any valuation extension to
\(\K(\Sphere^n)\), the same lemma with \(\rho=\widetilde\mu\)
expresses \(\widetilde\mu(K)\) as the sum in
\eqref{eq:coordinate-cut-formula}.  Every \(K\cap Q_\tau\) is proper
or empty, so its value is prescribed by \(\mu\).  Hence
\(\widetilde\mu(K)=\bar\mu(K)\), proving uniqueness.  In particular,
the construction is independent of the chosen orthonormal basis.

If \(\mu\) is \(SO(n+1)\)-invariant and \(g\in SO(n+1)\), then
\(K\mapsto\bar\mu(gK)\) is another valuation extension of \(\mu\).
Uniqueness gives \(\bar\mu(gK)=\bar\mu(K)\).
If \(\mu\) is simple and \(K\neq\emptyset\) has
\(\dim K\leq n-1\), every nonempty \(K\cap Q_\tau\) has dimension at
most \(n-1\).  Each term in \eqref{eq:extension-formula} is therefore
zero, and \(\bar\mu(K)=0\).  Hence \(\bar\mu\) is simple.
\end{proof}

\subsection{Continuous extension for simple valuations}
\label{subsec:continuous-simple-extension}

Lemma~\ref{lem:algebraic-extension} is purely algebraic and does not
address continuity.  Intersections with a fixed closed orthant need
not preserve Hausdorff convergence.  For example, let
\(Q=\{x\in\Sphere^n:x_j\geq0\text{ for all }j\}\), choose
\(p\in Q\) with \(p_1=0\), and put
\(p_r=(p-r^{-1}e_1)/\|p-r^{-1}e_1\|\).  Then
\(\{p_r\}\to\{p\}\), whereas \(\{p_r\}\cap Q=\emptyset\) and
\(\{p\}\cap Q=\{p\}\).  Thus continuity of the extension cannot be
deduced merely by asserting convergence of the fixed intersections.
For simple valuations, however, the values of these intersections
depend continuously on the set being cut.  We first record a criterion
for convergence of the intersections themselves.

\begin{lemma}\label{lem:stable-intersections}
Let \(A_r\to A\) in the Hausdorff topology on compact subsets of
\(\Sphere^n\) induced by \(d_s\), with \(\emptyset\) adjoined
as an isolated point, and let \(Q\subset\Sphere^n\) be closed.  If
\[
 A\cap\operatorname{int}Q
 \quad\text{is dense in}\quad
 A\cap Q,
\]
then
\[
 A_r\cap Q\longrightarrow A\cap Q
\]
in the same Hausdorff topology.
\end{lemma}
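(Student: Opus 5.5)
We verify the two halves of the Hausdorff distance separately, after disposing of the empty cases. If \(A\cap Q=\emptyset\), the hypothesis says nothing and we need \(A_r\cap Q=\emptyset\) for large \(r\). Suppose instead that \(A_r\cap Q\neq\emptyset\) for infinitely many \(r\), and pick points \(a_r\) in these intersections. Compactness of \(Q\) gives a subsequence converging to some \(a\in Q\). Hausdorff convergence \(A_r\to A\) (with \(A\neq\emptyset\); if \(A=\emptyset\), then \(A_r=\emptyset\) eventually since \(\emptyset\) is isolated) gives \(a\in A\). This contradicts \(A\cap Q=\emptyset\). So assume \(A\cap Q\neq\emptyset\); in particular \(A\neq\emptyset\), hence \(A_r\neq\emptyset\) for large \(r\).

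\emph{Lower semicontinuity.} Every point of \(A\cap Q\) should be approximated by points of \(A_r\cap Q\), uniformly. Fix \(\varepsilon>0\). By compactness of \(A\cap Q\) and the density hypothesis, choose finitely many points \(y_1,\dots,y_k\in A\cap\operatorname{int}Q\) forming an \(\varepsilon\)-net of \(A\cap Q\). Choose \(\delta>0\) with \(\delta<\varepsilon\) such that the closed \(\delta\)-ball around each \(y_i\) lies in \(Q\). For \(r\) large we have \(d_s(A_r,A)<\delta\), so there are \(x_{i,r}\in A_r\) with \(\operatorname{dist}_s(x_{i,r},y_i)<\delta\). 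These points lie in \(Q\), hence in \(A_r\cap Q\). This also shows \(A_r\cap Q\neq\emptyset\) eventually. Every point of \(A\cap Q\) is then within \(2\varepsilon\) of \(A_r\cap Q\). This is the only step using the density hypothesis, and it is the key point: interior points survive small perturbations, while boundary points of \(Q\) may not, as in the example preceding the statement.

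\emph{Upper semicontinuity.} We need \(\sup_{x\in A_r\cap Q}\operatorname{dist}_s(x,A\cap Q)\to0\). Suppose not. Then there are \(\varepsilon>0\), a subsequence, and points \(x_r\in A_r\cap Q\) with \(\operatorname{dist}_s(x_r,A\cap Q)\geq\varepsilon\). Pass to a further convergent subsequence \(x_r\to x\). Then \(x\in Q\) because \(Q\) is closed, and \(x\in A\) because \(\operatorname{dist}_s(x_r,A)\leq d_s(A_r,A)\to0\) and \(A\) is closed. Hence \(x\in A\cap Q\), contradicting \(\operatorname{dist}_s(x,A\cap Q)\geq\varepsilon\), which follows from the previous inequality by continuity of \(\operatorname{dist}_s(\cdot,A\cap Q)\).

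Combining the two halves gives \(d_s(A_r\cap Q,A\cap Q)\to0\). The argument is routine compactness, and the only subtle step is the lower semicontinuity argument above.
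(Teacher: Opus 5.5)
Your proposal is correct and follows the paper's proof essentially step for step: you dispose of the empty cases, prove $\sup_{x\in A_r\cap Q}\operatorname{dist}_s(x,A\cap Q)\to0$ by the same subsequence/compactness contradiction, and prove the other half with a finite net of points of $A\cap\operatorname{int}Q$ whose small balls lie in $Q$. The only differences are cosmetic: you take the net directly inside $A\cap\operatorname{int}Q$ rather than perturbing a net of $A\cap Q$, and you handle $A\cap Q=\emptyset$ by a sequential argument instead of a positive distance between $A$ and $Q$.
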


\begin{proof}
If \(A=\emptyset\), then \(A_r=\emptyset\) for all sufficiently large
\(r\), and the assertion follows.  Hence assume \(A\neq\emptyset\).
If \(A\cap Q=\emptyset\), then either \(Q=\emptyset\), or compactness
gives a positive spherical distance between \(A\) and \(Q\); in either case
\(A_r\cap Q=\emptyset\) for all sufficiently large \(r\).  Hence assume
\(A\cap Q\neq\emptyset\).  The density hypothesis gives a point
\(y\in A\cap\operatorname{int}Q\).  Hausdorff convergence supplies
\(y_r\in A_r\) tending to \(y\), and \(y_r\in Q\) for all sufficiently
large \(r\).  Thus \(A_r\cap Q\) is eventually nonempty.

First,
\[
 \sup_{x\in A_r\cap Q}\operatorname{dist}_s(x,A\cap Q)\to0.
\]
If this failed, then, after passing to a subsequence, there would be an
\(\varepsilon>0\) and points \(x_j\in A_{r_j}\cap Q\) with
\(\operatorname{dist}_s(x_j,A\cap Q)\geq\varepsilon\).  Passing to a
further subsequence gives \(x_j\to x\in Q\).  Hausdorff convergence
also gives \(x\in A\).  Thus \(x\in A\cap Q\), so
\(\operatorname{dist}_s(x_j,A\cap Q)\to0\), a contradiction.

Second,
\[
 \sup_{z\in A\cap Q}\operatorname{dist}_s(z,A_r\cap Q)\to0.
\]
Fix \(\varepsilon>0\).  Choose a finite
\(\varepsilon/3\)-net \(F\subset A\cap Q\); that is, every
\(z\in A\cap Q\) satisfies
\(\operatorname{dist}_s(z,F)<\varepsilon/3\).  For each \(x\in F\), the
density hypothesis gives \(y_x\in A\cap\operatorname{int}Q\) with
\(\operatorname{dist}_s(x,y_x)<\varepsilon/3\).  Since
\(y_x\in\operatorname{int}Q\), there is \(\rho_x>0\) such that the open
spherical ball of radius \(\rho_x\) about \(y_x\) is contained in \(Q\).
Hausdorff convergence gives, for all sufficiently large \(r\), points
\(y_{x,r}\in A_r\) with
\(\operatorname{dist}_s(y_{x,r},y_x)
<\min(\rho_x,\varepsilon/3)\).
Then \(y_{x,r}\in A_r\cap Q\).  If \(z\in A\cap Q\), choose
\(x\in F\) with \(\operatorname{dist}_s(z,x)<\varepsilon/3\).  Then
\(\operatorname{dist}_s(z,y_{x,r})<\varepsilon\), so
\[
 \sup_{z\in A\cap Q}\operatorname{dist}_s(z,A_r\cap Q)<\varepsilon
\]
for all sufficiently large \(r\).  Together the two estimates prove
the assertion.
\end{proof}

\begin{lemma}\label{lem:simple-slice-continuity}
Let \(Q\in\K_p(\Sphere^n)\) be full-dimensional, and let
\(\mu:\K_p(\Sphere^n)\to\R\) be a continuous simple valuation.
Then the map
\[
 K\longmapsto\mu(K\cap Q),\qquad K\in\K(\Sphere^n),
\]
is a continuous valuation on \(\K(\Sphere^n)\).
\end{lemma}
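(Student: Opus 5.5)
The valuation property is purely algebraic; continuity splits into two cases, according to whether \(K\) meets the interior of \(Q\). The first case is handled by Lemma~\ref{lem:stable-intersections}. The second is handled by simplicity together with the compactness of Lemma~\ref{lem:spherical-hyperspace-compactness}.

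\emph{Well-definedness and valuation property.} For every \(K\in\K(\Sphere^n)\), the cone \(\widehat K\cap\widehat Q\) is a closed convex cone. Hence \(K\cap Q\) is closed and spherically convex, and it is proper or empty because it lies in the proper set \(Q\). So \(K\cap Q\in\K_p(\Sphere^n)\) and \(\mu(K\cap Q)\) is defined. If \(K,L,K\cup L\in\K(\Sphere^n)\), then intersecting with \(Q\) preserves unions and intersections, and \((K\cup L)\cap Q\in\K_p(\Sphere^n)\). The valuation identity for \(\mu\) therefore transfers directly, exactly as in the proof of Lemma~\ref{lem:algebraic-extension}, and the empty set maps to \(\mu(\emptyset)=0\).

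\emph{Continuity, first case.} Let \(K_r\to K\) in \(\K(\Sphere^n)\). If \(K=\emptyset\), then \(K_r=\emptyset\) eventually and there is nothing to prove, so assume \(K\neq\emptyset\). Suppose \(K\cap\operatorname{int}Q\neq\emptyset\), and pick \(y\) in it. I claim \(K\cap\operatorname{int}Q\) is dense in \(K\cap Q\). Take \(x\in K\cap Q\). Since \(x,y\) lie in the proper set \(Q\), they are not antipodal, and the geodesic arc from \(x\) to \(y\) is the link of \(\pos(x,y)\subset\widehat K\cap\widehat Q\). Because \(\widehat Q\) is convex and \(y\) is an interior point, every point of this arc other than \(x\) lies in \(\operatorname{int}Q\). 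Letting such points tend to \(x\) proves density. Lemma~\ref{lem:stable-intersections} then gives \(K_r\cap Q\to K\cap Q\) in \(\K_p(\Sphere^n)\), and continuity of \(\mu\) gives \(\mu(K_r\cap Q)\to\mu(K\cap Q)\).

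\emph{Continuity, second case (the main point).} Suppose instead \(K\cap\operatorname{int}Q=\emptyset\). Then the convex cones \(\widehat K\) and \(\operatorname{int}\widehat Q\) are disjoint apart from the origin, so a linear hyperplane separates them. Hence \(K\cap Q\) lies in a great hypersphere and has dimension at most \(n-1\), or is empty, and simplicity gives \(\mu(K\cap Q)=0\). It remains to show \(\mu(K_r\cap Q)\to0\), which I do by a subsequence argument. Take any subsequence. If infinitely many terms have \(K_r\cap Q=\emptyset\), those values are \(0\). Otherwise, by Lemma~\ref{lem:spherical-hyperspace-compactness}, a further subsequence has \(K_r\cap Q\to L\) for some nonempty \(L\in\K(\Sphere^n)\). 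Every point of \(L\) is a limit of points of \(K_r\cap Q\), and \(Q\) is closed, so \(L\subset K\cap Q\). Thus \(L\) is proper, lies in \(\K_p(\Sphere^n)\), and has \(\dim L\leq n-1\). Continuity and simplicity of \(\mu\) then give \(\mu(K_r\cap Q)\to\mu(L)=0\). Since every subsequence has a further subsequence along which the values tend to \(0\), the whole sequence tends to \(0=\mu(K\cap Q)\).

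The delicate step is this degenerate case, where Hausdorff convergence of the intersections can genuinely fail, as in the orthant example above; simplicity is exactly what rescues it.
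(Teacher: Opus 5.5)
Your proof is correct and follows the paper's argument. In the nondegenerate case you use the same density argument (moving toward a point $y\in K\cap\operatorname{int}Q$) to feed Lemma~\ref{lem:stable-intersections}, and in the degenerate case you use the same compactness, subsequence and simplicity argument. The only difference is the case split: you ask whether $K$ meets $\operatorname{int}Q$, whereas the paper asks whether $\dim(K\cap Q)=n$. Your split needs a linear separation argument to place $K\cap Q$ in a great hypersphere, while under the paper's split this is immediate; both are valid.
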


\begin{proof}
Every intersection \(K\cap Q\) is proper or empty, so the map is well
defined and vanishes at \(\emptyset\).  Intersection with \(Q\) preserves
unions and intersections, so the valuation identity follows from that of
\(\mu\).

Let \(K_r\to K\).  If \(K=\emptyset\), then \(K_r=\emptyset\)
eventually, so assume \(K\neq\emptyset\).  If \(K\cap Q=\emptyset\),
compactness gives a positive distance between \(K\) and \(Q\); hence
\(K_r\cap Q=\emptyset\) for all sufficiently large \(r\).
It remains to treat \(K\cap Q\neq\emptyset\).

Suppose first that \(\dim(K\cap Q)=n\), and choose
\(y\in\operatorname{int}(K\cap Q)\).  For \(x\in K\cap Q\), set
\[
 x_t:=\frac{(1-t)x+ty}{\|(1-t)x+ty\|},\qquad 0<t<1.
\]
The denominator is nonzero because \(Q\) is proper.  Convexity of
\(\widehat K\) gives \(x_t\in K\).  Since
\(y\in\operatorname{int}Q\), it lies in the interior of
\(\widehat Q\), and convexity gives
\((1-t)x+ty\in\operatorname{int}\widehat Q\).  Thus
\(x_t\in K\cap\operatorname{int}Q\) and \(x_t\to x\) as
\(t\downarrow0\).  Hence \(K\cap\operatorname{int}Q\) is dense in
\(K\cap Q\).  Lemma~\ref{lem:stable-intersections} gives
\(K_r\cap Q\to K\cap Q\), and continuity of \(\mu\) yields
\(\mu(K_r\cap Q)\to\mu(K\cap Q)\).

Suppose instead that \(\dim(K\cap Q)\leq n-1\).  Then
\(\mu(K\cap Q)=0\) by simplicity.  Suppose, to the contrary, that
\(\mu(K_r\cap Q)\) does not tend to zero.  There are a subsequence
and an \(\varepsilon>0\) such that
\[
 |\mu(K_{r_j}\cap Q)|\geq\varepsilon\qquad\text{for every }j.
\]
These intersections are nonempty.  By
Lemma~\ref{lem:spherical-hyperspace-compactness}, after passing to a
further subsequence they converge to a nonempty
\(A\in\K(\Sphere^n)\).  Since \(K_{r_j}\to K\) and \(Q\) is closed,
we have \(A\subset K\cap Q\).  Thus \(A\) is proper and
\(\dim A\leq n-1\).  Continuity and simplicity give
\[
 \mu(K_{r_j}\cap Q)\longrightarrow\mu(A)=0,
\]
a contradiction.  Therefore the map is continuous.
\end{proof}

\begin{lemma}\label{lem:simple-extension-continuous}
If \(\mu:\K_p(\Sphere^n)\to\R\) is continuous and simple, then its
extension \(\bar\mu\) from Lemma~\ref{lem:algebraic-extension} is
continuous on \(\K(\Sphere^n)\).
\end{lemma}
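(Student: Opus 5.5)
The plan is to write \(\bar\mu\) via the extension formula \eqref{eq:extension-formula} as a finite signed sum of the maps \(K\mapsto\mu(K\cap Q_\tau)\), and to show that each summand is continuous on \(\K(\Sphere^n)\). The difficulty is that the \(Q_\tau\) with \(z(\tau)\geq1\) are not full-dimensional, so Lemma~\ref{lem:simple-slice-continuity} does not apply to them directly. Simplicity will take care of these terms.

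For \(z(\tau)\geq1\), the set \(Q_\tau\) lies in a coordinate great hypersphere, so every nonempty \(K\cap Q_\tau\) has dimension at most \(n-1\). Since \(\mu\) is simple, \(\mu(K\cap Q_\tau)=0\) for every \(K\), including \(K\cap Q_\tau=\emptyset\), where \(\mu(\emptyset)=0\). These summands vanish identically and are trivially continuous. Hence
\[
 \bar\mu(K)=\sum_{\tau\in\{-1,1\}^{n+1}}\mu(K\cap Q_\tau),
\]
a sum over the closed orthants only.

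For \(\tau\in\{-1,1\}^{n+1}\), the set \(Q_\tau\) is a closed orthant. It is full-dimensional, since it contains the open orthant \(\{\tau_jx_j>0\ \forall j\}\), which is open in \(\Sphere^n\). It is also proper: the computation preceding Lemma~\ref{lem:coordinate-cut-formula} shows \(\langle x,\sum_j\tau_je_j\rangle>0\) on \(Q_\tau\), so \(Q_\tau\) lies in an open hemisphere. Thus \(Q_\tau\in\K_p(\Sphere^n)\) is full-dimensional, and Lemma~\ref{lem:simple-slice-continuity} shows that \(K\mapsto\mu(K\cap Q_\tau)\) is continuous on \(\K(\Sphere^n)\). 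A finite sum of continuous functions is continuous, so \(\bar\mu\) is continuous.

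I expect no real obstacle here, since the substantive limiting argument was already done in Lemma~\ref{lem:simple-slice-continuity}. The only care needed is in the vanishing step. One must use that a nonempty subset of \(\{x_j=0\}\) has \(\dim\widehat{K\cap Q_\tau}\leq n\), i.e. \(\dim(K\cap Q_\tau)\leq n-1\), which is exactly the simplicity hypothesis in the paper's dimension convention.
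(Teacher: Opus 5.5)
Your proposal is correct and follows the paper's proof. Simplicity kills the terms with \(z(\tau)\geq1\), because those \(Q_\tau\) lie in a coordinate great hypersphere. The remaining closed-orthant terms \(K\mapsto\mu(K\cap Q_\sigma)\), \(\sigma\in\{-1,1\}^{n+1}\), are continuous by Lemma~\ref{lem:simple-slice-continuity}, since each \(Q_\sigma\) is full-dimensional and proper.
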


\begin{proof}
If \(z(\tau)>0\), then \(Q_\tau\) lies in a great hypersphere, so
simplicity gives \(\mu(K\cap Q_\tau)=0\) for every
\(K\in\K(\Sphere^n)\).  Thus \eqref{eq:extension-formula} reduces to
\[
 \bar\mu(K)=\sum_{\sigma\in\{-1,1\}^{n+1}}\mu(K\cap Q_\sigma).
\]
Each \(Q_\sigma\) is full-dimensional and proper.  By
Lemma~\ref{lem:simple-slice-continuity}, every summand is continuous
on \(\K(\Sphere^n)\), and hence so is \(\bar\mu\).
\end{proof}

\begin{corollary}\label{cor:continuous-simple-extension}
Every continuous simple valuation on \(\K_p(\Sphere^n)\) admits a
unique continuous simple extension to \(\K(\Sphere^n)\).  If the
original valuation is \(SO(n+1)\)-invariant, then so is its extension.
\end{corollary}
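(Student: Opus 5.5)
The corollary will follow by combining the three preceding results. Let \(\mu:\K_p(\Sphere^n)\to\R\) be a continuous simple valuation, and let \(\bar\mu\) be its algebraic extension given by formula~\eqref{eq:extension-formula}.

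\emph{Existence.} By Lemma~\ref{lem:algebraic-extension}, \(\bar\mu\) is a valuation on \(\K(\Sphere^n)\) that extends \(\mu\). The same lemma shows that \(\bar\mu\) is simple: every nonempty slice \(K\cap Q_\tau\) of a lower-dimensional \(K\) is again lower-dimensional. Lemma~\ref{lem:simple-extension-continuous} gives continuity of \(\bar\mu\). That lemma rests on two facts: the terms with \(z(\tau)>0\) vanish by simplicity, and each remaining term \(K\mapsto\mu(K\cap Q_\sigma)\) is continuous by Lemma~\ref{lem:simple-slice-continuity}, because \(Q_\sigma\) is full-dimensional and proper. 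So \(\bar\mu\) is a continuous simple extension.

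\emph{Uniqueness.} Uniqueness holds already in the purely algebraic sense. Let \(\widetilde\mu\) be any valuation on \(\K(\Sphere^n)\) extending \(\mu\), continuous or not. Lemma~\ref{lem:coordinate-cut-formula} applied to \(\rho=\widetilde\mu\) writes \(\widetilde\mu(K)\) as the signed sum of the values \(\widetilde\mu(K\cap Q_\tau)\). Each \(K\cap Q_\tau\) is proper or empty, so these values equal \(\mu(K\cap Q_\tau)\). Hence \(\widetilde\mu=\bar\mu\), exactly as in the uniqueness part of Lemma~\ref{lem:algebraic-extension}. Continuity plays no role here; it is only needed for existence.

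\emph{Invariance.} Suppose \(\mu\) is \(SO(n+1)\)-invariant, and fix \(g\in SO(n+1)\). Then \(K\mapsto\bar\mu(gK)\) is a valuation on \(\K(\Sphere^n)\) extending \(\mu\). By uniqueness it coincides with \(\bar\mu\), so \(\bar\mu\) is \(SO(n+1)\)-invariant; this is again the argument of Lemma~\ref{lem:algebraic-extension}.

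No step is a genuine obstacle. All the analytic difficulty, namely the failure of Hausdorff convergence for fixed orthant intersections, was already handled in Lemma~\ref{lem:simple-slice-continuity}. The one point to state carefully is that uniqueness is taken among all valuation extensions, which is stronger than uniqueness among continuous simple ones.
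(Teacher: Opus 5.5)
Your proposal is correct and follows the paper's proof. The paper likewise just combines Lemma~\ref{lem:algebraic-extension}, which gives existence, uniqueness among all valuation extensions, and the transfer of simplicity and \(SO(n+1)\)-invariance, with Lemma~\ref{lem:simple-extension-continuous}, which gives continuity of the extension.
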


\begin{proof}
Combine Lemmas~\ref{lem:algebraic-extension}
and~\ref{lem:simple-extension-continuous}.
\end{proof}

\section{Induction and reduction}\label{sec:reduction}

The proof of Theorem~\ref{thm:main} occupies
Sections~\ref{sec:reduction}--\ref{sec:completion}.  For \(n\geq1\),
consider the following two statements.
\[
\begin{array}{ll}
(\mathrm H_n) &
\begin{minipage}[t]{0.79\textwidth}
Every continuous \(SO(n+1)\)-invariant valuation on
\(\K_p(\Sphere^n)\) is uniquely expressible as a linear combination
of \(V_0,\ldots,V_n\).
\end{minipage}
\\[2mm]
(\mathrm N_n) &
\begin{minipage}[t]{0.79\textwidth}
Every continuous, simple, \(SO(n+1)\)-invariant valuation
\(\lambda\) on \(\K(\Sphere^n)\) satisfying
\(\lambda(\Sphere^n)=0\) is identically zero.
\end{minipage}
\end{array}
\]

To prove Theorem~\ref{thm:main}, it is enough to establish
\((\mathrm H_n)\) for every \(n\): classify the restriction of a
valuation to \(\K_p(\Sphere^n)\), and then use the uniqueness in
Lemma~\ref{lem:algebraic-extension}.  The argument has two stages.
First, we establish \((\mathrm N_1)\) below and prove
\((\mathrm N_{n-1})\Rightarrow(\mathrm N_n)\) in
Sections~\ref{sec:cochain}--\ref{sec:completion}, obtaining
\((\mathrm N_n)\) in every dimension.  Second, starting from
\((\mathrm H_1)\), also established below, the reduction
\((\mathrm H_{n-1})\text{ and }(\mathrm N_n)
\Rightarrow(\mathrm H_n)\) in Proposition~\ref{prop:reduction}
gives \((\mathrm H_n)\) in every dimension.

\subsection{Base case}

\begin{proposition}\label{prop:circle}
Both \((\mathrm H_1)\) and \((\mathrm N_1)\) hold.
\end{proposition}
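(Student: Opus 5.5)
On \(\Sphere^1\) everything is controlled by arcs, so I will reduce both statements to a one-variable functional equation for the value of a valuation on an arc of length \(\theta\). The members of \(\K_p(\Sphere^1)\) are the empty set, single points, and closed arcs of length \(\theta\in(0,\pi)\). Two arcs of the same length differ by a rotation, so \(SO(2)\)-invariance makes \(\mu\) determined by \(a:=\mu(\{p\})\) together with \(f(\theta):=\mu(\text{arc of length }\theta)\) for \(0<\theta<\pi\).

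To get the equation, split an arc of length \(\theta_1+\theta_2<\pi\) at an interior point. The two subarcs are convex, their union is convex, and their intersection is a point. The valuation identity therefore gives
\[
 f(\theta_1+\theta_2)=f(\theta_1)+f(\theta_2)-a .
\]
Hence \(g:=f-a\) is additive on \((0,\pi)\) whenever the sum stays below \(\pi\). Continuity of \(\mu\) makes \(g\) continuous, and an arc of length \(\theta\to0\) converges to a point, so \(g(\theta)\to0\). The standard Cauchy argument now applies: additivity gives \(g\) at dyadic multiples, and continuity extends this, so \(g(\theta)=c\theta\) for some constant \(c\).

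On the side of the intrinsic volumes, \(V_0(\{p\})=\tfrac12\) and \(V_1(\{p\})=0\). For an arc \(K\) of length \(\theta<\pi\), the cone \(\widehat K\) is a planar wedge of angle \(\theta\), so
\[
 V_1(K)=\frac{\theta}{2\pi},\qquad V_0(K)=\frac{\pi-\theta}{2\pi}=\tfrac12-\tfrac{\theta}{2\pi}.
\]
The first value is the normalized length. For the second, the Gaussian projection lands at the origin exactly when the vector lies in the polar wedge, whose angle is \(\pi-\theta\). Writing \(\mu=\alpha V_0+\beta V_1\), the condition at points forces \(\alpha=2a\). The condition on arcs reads \(a+c\theta=a-\tfrac{\alpha\theta}{2\pi}+\tfrac{\beta\theta}{2\pi}\), which forces \(\beta=2\pi c+2a\). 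This proves existence in \((\mathrm H_1)\). Uniqueness follows because \(V_0\) and \(V_1\) are linearly independent: they take the values \((\tfrac12,0)\) on a point and \((\tfrac14,\tfrac14)\) on a quarter arc.

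For \((\mathrm N_1)\), let \(\lambda\) be continuous, simple, and invariant on \(\K(\Sphere^1)\). Simplicity gives \(a=0\), so by the computation above \(\lambda=\beta V_1\) on proper sets. The members of \(\K(\Sphere^1)\) that are not proper are the arcs of length in \([\pi,2\pi)\) (including closed half-circles), antipodal pairs, and \(\Sphere^1\) itself. By uniqueness of the algebraic extension (Lemma~\ref{lem:algebraic-extension}), the identity \(\lambda=\beta V_1\) on \(\K_p\) persists on all of \(\K(\Sphere^1)\), since \(V_1\) restricted to \(\K(\Sphere^1)\) is a valuation extension of its restriction to \(\K_p\). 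Hence \(0=\lambda(\Sphere^1)=\beta V_1(\Sphere^1)=\beta\), and \(\lambda\equiv0\).

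I expect the main obstacle to be careful bookkeeping rather than a deep step: checking that each union used in the valuation identity is itself convex, and that continuity at the degenerate limit \(\theta\to0\) is legitimate in the Hausdorff topology with the empty set isolated. Both checks are routine.
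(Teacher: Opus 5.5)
Your route is the paper's: reduce to the arc-length function, solve the local Cauchy equation to get \(f(\theta)=a+c\theta\), and for \((\mathrm N_1)\) use simplicity, Lemma~\ref{lem:algebraic-extension}, and evaluation at \(\Sphere^1\). The \((\mathrm N_1)\) half is correct as written, because there \(a=0\) and only the correct values of \(V_1\) enter. The existence step of \((\mathrm H_1)\), however, rests on a wrong value of \(V_0\) on arcs. You computed \(v_0(\widehat K)\), the probability that the Gaussian projection lands at the origin. By definition, though, \(V_0(K)=v_1(\widehat K)\): the index is shifted, and \(v_0\) is not one of the spherical intrinsic volumes. Let \(\widehat K\) be a wedge of angle \(\theta\in(0,\pi)\). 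The set of points projecting into the relative interior of each edge ray is a quadrant, so \(V_0(K)=\tfrac14+\tfrac14=\tfrac12\) for every proper arc. Indeed \(V_0\equiv\tfrac12\) on nonempty members of \(\K_p(\Sphere^1)\), consistent with \(v_0+v_1+v_2=\tfrac{\pi-\theta}{2\pi}+\tfrac12+\tfrac{\theta}{2\pi}=1\).

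Consequently the correct coefficients are \(\alpha=2a\) and \(\beta=2\pi c\). Your formula \(\beta=2\pi c+2a\) would give, for \(\mu=V_0\) (so \(a=\tfrac12\), \(c=0\)), the false identity \(V_0=V_0+V_1\). Likewise, the value of \(V_0\) on a quarter arc is \(\tfrac12\), not \(\tfrac14\). The corrected pairs \((\tfrac12,0)\) and \((\tfrac12,\tfrac14)\) are still independent, so your conclusions survive the fix, but as written the existence step asserts a false representation. The paper avoids the computation altogether. It notes that the space is at most two-dimensional, with parameters \(a\) and \(c\). It then observes that \(V_0,V_1\) are independent because \(V_0>0\) on points, while \(V_1\) vanishes on points and is positive on nondegenerate arcs; only sign information is needed. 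A smaller slip: arcs of length in \((\pi,2\pi)\) are not members of \(\K(\Sphere^1)\), since their radial extensions are reflex wedges and so not convex. The non-proper members are only closed half-circles, antipodal pairs, and \(\Sphere^1\). This is harmless, since your argument relies only on Lemma~\ref{lem:algebraic-extension}.
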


\begin{proof}
For \((\mathrm H_1)\), let \(\mu\) be a continuous
\(SO(2)\)-invariant valuation on \(\K_p(\Sphere^1)\).  By rotation
invariance, \(\mu(I)=f(t)\) for a continuous function
\(f:[0,\pi)\to\R\), where \(I\) is a closed arc of length \(t\),
with an arc of length \(0\) interpreted as a point.  Set \(c=f(0)\).
Splitting an arc into two adjacent arcs gives
\[
 f(s+t)+c=f(s)+f(t)
 \qquad(s,t\geq0,\ s+t<\pi).
\]
Thus \(g(t):=f(t)-c\) is continuous and locally additive; subdivision
and continuity give \(f(t)=c+at\) for some \(a\in\R\).
Since points and proper closed arcs are all the nonempty members of
\(\K_p(\Sphere^1)\), this valuation space has dimension at most two.
It contains the linearly independent valuations \(V_0,V_1\): \(V_0\)
is positive on points, whereas \(V_1\) vanishes there and is positive
on arcs of positive length.  Hence \(V_0,V_1\) form a basis, proving
\((\mathrm H_1)\).

For \((\mathrm N_1)\), statement \((\mathrm H_1)\) expresses the
restriction of \(\lambda\) to \(\K_p(\Sphere^1)\) as a linear
combination of \(V_0,V_1\).  Since \(\lambda\) is simple,
\(V_0(\{p\})=1/2\), and \(V_1(\{p\})=0\), the \(V_0\) coefficient
vanishes.  Thus \(\lambda=aV_1\) on \(\K_p(\Sphere^1)\) for some
\(a\in\R\).  Uniqueness in Lemma~\ref{lem:algebraic-extension}
gives the same equality on \(\K(\Sphere^1)\).  Evaluating on the full
circle yields \(a=\lambda(\Sphere^1)=0\), since
\(V_1(\Sphere^1)=1\).  Hence \(\lambda=0\).
\end{proof}

\subsection{Reduction to the vanishing statement}

\begin{proposition}\label{prop:reduction}
For every \(n\geq2\),
\[
 (\mathrm H_{n-1})\quad\text{and}\quad(\mathrm N_n)
 \quad\Longrightarrow\quad
 (\mathrm H_n).
\]
\end{proposition}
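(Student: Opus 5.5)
We follow Klain's Euclidean reduction: first subtract a combination of intrinsic volumes so that the remainder vanishes on all lower-dimensional sets, and then apply \((\mathrm N_n)\). Let \(\mu\) be a continuous \(SO(n+1)\)-invariant valuation on \(\K_p(\Sphere^n)\).

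\emph{Uniqueness.} Uniqueness is a linear-independence statement for \(V_0,\ldots,V_n\) on \(\K_p(\Sphere^n)\). Suppose \(\sum_j c_jV_j=0\) on proper sets. By Lemma~\ref{lem:algebraic-extension}, the unique extension of this combination to \(\K(\Sphere^n)\) is still zero. Evaluating on great spheres gives \(c_k=\sum_j c_jV_j(S_k)=0\), since \(V_j(S_k)=\delta_{jk}\).

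\emph{Existence, step one: restriction to a hyperplane.} Fix a great hypersphere \(E=\Sphere(W)\) with \(\dim W=n\). Every proper member of \(\K_p(E)\) is also a member of \(\K_p(\Sphere^n)\), and the Hausdorff topologies agree on it. So the restriction \(\mu|_E\) is a continuous valuation on \(\K_p(E)\). It is \(SO(W)\)-invariant, because each \(g\in SO(W)\) extends to an element of \(SO(n+1)\) by acting as \(\pm1\) on \(W^\perp\) with a suitable sign; for \(n\geq2\) one can also simply take the identity on \(W^\perp\). By \((\mathrm H_{n-1})\),
\[
 \mu|_{\K_p(E)}=\sum_{j=0}^{n-1}c_jV_j^E .
\]
The conic intrinsic volumes are intrinsic under isometric embeddings, so \(V_j^E=V_j|_{\K_p(E)}\) for \(j\leq n-1\). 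Define
\[
 \lambda:=\mu-\sum_{j=0}^{n-1}c_jV_j
\]
on \(\K_p(\Sphere^n)\). This is a continuous invariant valuation that vanishes on \(\K_p(E)\). Every proper nonempty \(K\) with \(\dim K\leq n-1\) lies in some great hypersphere, and every great hypersphere is an \(SO(n+1)\)-image of \(E\). Hence \(\lambda\) vanishes on such \(K\), so \(\lambda\) is simple.

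\emph{Existence, step two: applying \((\mathrm N_n)\).} By Corollary~\ref{cor:continuous-simple-extension}, \(\lambda\) extends to a continuous, simple, \(SO(n+1)\)-invariant valuation \(\bar\lambda\) on \(\K(\Sphere^n)\). Set \(c_n:=\bar\lambda(\Sphere^n)\). Then
\[
 \bar\lambda-c_nV_n
\]
is continuous, simple (since \(V_n\) is simple, being normalized spherical volume), invariant, and vanishes at \(\Sphere^n\). By \((\mathrm N_n)\) it is zero. Restricting to \(\K_p(\Sphere^n)\) gives \(\mu=\sum_{j=0}^nc_jV_j\).

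The main obstacle is the invariance bookkeeping in step one. We must check that \(SO(W)\)-invariance of the restriction really follows from \(SO(n+1)\)-invariance. We must also check that the reference hypersphere can be moved onto any great hypersphere while preserving which set is which; this is where transitivity of \(SO(n+1)\) on great hyperspheres is used. The remaining steps are direct applications of the extension and intrinsic-volume facts recorded earlier.
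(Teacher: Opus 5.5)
Your proposal is correct and follows the paper's proof essentially step for step. You apply $(\mathrm H_{n-1})$ on a great hypersphere, use intrinsicity of the $V_j$ to subtract and obtain a simple valuation, extend it by Corollary~\ref{cor:continuous-simple-extension}, remove $\bar\lambda(\Sphere^n)V_n$ and invoke $(\mathrm N_n)$, and get uniqueness by evaluating on great $k$-spheres.

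The only difference is cosmetic. You transfer the vanishing from $E$ to other great hyperspheres through the $SO(n+1)$-invariance of $\lambda$ itself, whereas the paper identifies the coefficients $a_j^{E'}$ with $a_j^E$ using uniqueness in $(\mathrm H_{n-1})$; both work. Also, extending $g\in SO(W)$ by the identity on $W^\perp$ always lands in $SO(n+1)$, for every $n$. So your remark about choosing a sign $\pm1$ is unnecessary, and acting by $-1$ on $W^\perp$ would actually leave $SO(n+1)$.
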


\begin{proof}
Let \(\mu:\K_p(\Sphere^n)\to\R\) be a continuous
\(SO(n+1)\)-invariant valuation.  For each \(n\)-dimensional subspace
\(W\subset\R^{n+1}\), put \(E=\Sphere(W)\).  Every element of
\(SO(W)\) extends by the identity on \(W^\perp\) to an element of
\(SO(n+1)\), so \(\mu|_{\K_p(E)}\) is \(SO(W)\)-invariant.
Let \(V_0^E,\ldots,V_{n-1}^E\) denote the spherical intrinsic volumes
of \(E\), computed with \(W\) as the ambient Euclidean space.
Intrinsicity of conic intrinsic volumes under the inclusion
\(W\hookrightarrow\R^{n+1}\), together with the cone--sphere
definition of spherical intrinsic volumes, gives
\[
 V_j^E(K)=V_j(K)
 \qquad(K\in\K_p(E),\ 0\leq j\leq n-1).
\]
Thus \((\mathrm H_{n-1})\), applied on \(E\), yields unique constants
\(a_0^E,\ldots,a_{n-1}^E\) such that
\[
 \mu(K)=\sum_{j=0}^{n-1}a_j^E V_j^E(K)
       =\sum_{j=0}^{n-1}a_j^E V_j(K)
 \qquad(K\in\K_p(E)).
\]

For any other great hypersphere \(E'\), choose \(g\in SO(n+1)\) with
\(gE=E'\).  If \(K\in\K_p(E')\), then \(g^{-1}K\in\K_p(E)\), and
\[
 \mu(K)=\mu(g^{-1}K)=\sum_{j=0}^{n-1}a_j^EV_j(g^{-1}K)
       =\sum_{j=0}^{n-1}a_j^EV_j(K).
\]
The first equality uses rotation invariance of \(\mu\), the second
uses the expansion on \(E\), and the last uses rotation invariance of
each \(V_j\).
Uniqueness in \((\mathrm H_{n-1})\) therefore shows that
\(a_j^{E'}=a_j^E\) for every \(j\).  Write \(a_j\) for these common
coefficients.

Every lower-dimensional nonempty proper set \(K\) lies in a great
hypersphere: extend \(\operatorname{span}\widehat K\) to an
\(n\)-dimensional subspace.  Its cone is still pointed in that
subspace, so \(K\) is proper there as well.  Consequently,
\[
 \eta:=\mu-\sum_{j=0}^{n-1}a_j V_j
\]
is a continuous, simple, \(SO(n+1)\)-invariant valuation.
By Corollary~\ref{cor:continuous-simple-extension}, it has a continuous
simple invariant extension \(\bar\eta\) to \(\K(\Sphere^n)\).  Set
\[
 a_n:=\bar\eta(\Sphere^n),\qquad
 \bar\lambda:=\bar\eta-a_nV_n.
\]
Since \(V_n\) is simple and \(V_n(\Sphere^n)=1\), the valuation
\(\bar\lambda\) satisfies all hypotheses of \((\mathrm N_n)\).
Hence \(\bar\lambda=0\), and restriction to the proper domain yields
the required expansion.

For uniqueness, any relation \(\sum_{j=0}^n b_jV_j=0\) on the proper
domain extends to the full domain by the uniqueness in
Lemma~\ref{lem:algebraic-extension}.  Evaluation on a great
\(k\)-sphere, using \(V_j(S_k)=\delta_{jk}\), forces \(b_k=0\) for
every \(k\).
\end{proof}

\section{The continuous simplex cochain}\label{sec:cochain}

It remains to prove
\[
 (\mathrm N_{n-1})\Longrightarrow(\mathrm N_n).
\]
Throughout Sections~\ref{sec:cochain}--\ref{sec:transform}, fix
\(n\geq2\), assume \((\mathrm N_{n-1})\), and let
\(\lambda:\K(\Sphere^n)\to\R\) be a continuous, simple,
\(SO(n+1)\)-invariant valuation satisfying
\(\lambda(\Sphere^n)=0\).  We first encode the values of
\(\lambda\) on spherical simplices as a cochain \(c_\lambda\) and prove
its continuity at degenerate tuples.

\subsection{Vanishing on suspensions}

For any one-dimensional linear subspace \(L\subset\R^{n+1}\) and
\(Q\in\K(\Sphere(L^\perp))\), define the spherical suspension of \(Q\)
in \(\Sphere^n\) by
\[
 \Susp_L Q:=(\widehat Q+L)\cap\Sphere^n.
\]
Here \(\widehat\emptyset=\{0\}\), so \(\Susp_L\emptyset=\Sphere(L)\).
Since \(\widehat Q\subset L^\perp\), the orthogonal decomposition
\(L^\perp\oplus L\) shows that \(\widehat Q+L\) is a closed convex cone,
so
\(\Susp_L Q\in\K(\Sphere^n)\).

\begin{lemma}\label{lem:suspension}
For every one-dimensional linear subspace \(L\subset\R^{n+1}\) and
every \(Q\in\K(\Sphere(L^\perp))\),
\[
 \lambda(\Susp_L Q)=0.
\]
\end{lemma}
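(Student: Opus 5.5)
The idea is to transport \(\lambda\) to a valuation on the great hypersphere \(E:=\Sphere(L^\perp)\) by suspending, and then apply \((\mathrm N_{n-1})\) there.

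Define
\[
 \nu:\K(E)\to\R,\qquad \nu(Q):=\lambda(\Susp_L Q)\ \ (Q\neq\emptyset),\qquad \nu(\emptyset):=0.
\]
Note that \(\Susp_L\emptyset=\Sphere(L)\) is a \(0\)-sphere, and simplicity gives \(\lambda(\Sphere(L))=0\), since \(n\geq2\). So \(\nu(Q)=\lambda(\Susp_LQ)\) for every \(Q\). We then check the hypotheses of \((\mathrm N_{n-1})\) on \(E\) in four steps.

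\emph{Valuation property.} The map \(C\mapsto C+L\) on cones in \(L^\perp\) is the preimage under the orthogonal projection \(\pi:\R^{n+1}\to L^\perp\). Preimages commute with unions and intersections, so \(\Susp_L(Q\cup Q')=\Susp_LQ\cup\Susp_LQ'\) and similarly for intersections, whenever \(Q,Q',Q\cup Q'\in\K(E)\). Convexity of \(\widehat{Q\cup Q'}\) also gives convexity of its preimage.

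\emph{Simplicity.} If \(\dim Q\leq n-2\), then \(\widehat Q+L\) spans a space of dimension at most \(n\), so \(\dim\Susp_LQ\leq n-1\) and \(\lambda\) vanishes on it.

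\emph{Invariance.} Each \(g\in SO(L^\perp)\) extends by the identity on \(L\) to an element of \(SO(n+1)\) commuting with \(\Susp_L\).

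\emph{Normalization.} \(\Susp_L E=\Sphere^n\), so \(\nu(E)=\lambda(\Sphere^n)=0\).

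The main work is continuity of \(\nu\). Let \(Q_r\to Q\) in \(\K(E)\) with \(Q\neq\emptyset\). Via Lemma~\ref{lem:link-homeomorphism}, it suffices to show \(\widehat Q_r+L\to\widehat Q+L\) in the conic Hausdorff distance. A point of \((\widehat Q+L)\cap\overline B_{n+1}\) has the form \(x+y\) with \(x\in\widehat Q\), \(y\in L\), and \(\|x\|^2+\|y\|^2\leq1\), by orthogonality. Approximating \(x\) by \(x'\in\widehat Q_r\) with \(\|x'\|\leq\|x\|\) and \(\|x-x'\|\leq d_c(\widehat Q_r,\widehat Q)\) keeps \(x'+y\) in the unit ball; the scaled radial matching in the proof of Lemma~\ref{lem:link-homeomorphism} does exactly this. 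Hence \(d_c(\widehat Q_r+L,\widehat Q+L)\leq d_c(\widehat Q_r,\widehat Q)\), and symmetrically. The empty set is isolated in \(\K(E)\), so it causes no trouble.

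Applying \((\mathrm N_{n-1})\) on \(E\), which is a copy of \(\Sphere^{n-1}\) via an isometry \(L^\perp\cong\R^n\), yields \(\nu\equiv0\), which is the claim. The only delicate point is the orthogonality estimate in the continuity step; everything else is formal.
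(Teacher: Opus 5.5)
Your proposal is correct and follows the paper's proof: define \(\nu(Q)=\lambda(\Susp_L Q)\) on \(\K(\Sphere(L^\perp))\), check that it is a continuous, simple, \(SO(L^\perp)\)-invariant valuation with \(\nu(\Sphere(L^\perp))=\lambda(\Sphere^n)=0\), and apply \((\mathrm N_{n-1})\). The only difference is in the continuity step. The paper stays on the sphere, using the parametrization \(\cos t\,q+\sin t\,\ell\) to get \(d_s(\Susp_L Q_r,\Susp_L Q)\leq d_s(Q_r,Q)\), whereas you keep the \(L\)-component fixed and match the \(L^\perp\)-component in the conic distance. Note that the radial matching you cite gives the bound \(2\sin(d_s(Q_r,Q)/2)\), not \(d_c(\widehat Q_r,\widehat Q)\); getting the \(d_c\) bound needs a nearest-point retraction onto the ball of radius \(\|x\|\). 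Either bound suffices for continuity.
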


\begin{proof}
Define \(\nu(Q):=\lambda(\Susp_L Q)\) on
\(\K(\Sphere(L^\perp))\).  Since
\(\Susp_L\emptyset=\Sphere(L)\), simplicity gives
\(\nu(\emptyset)=0\).  Let \(Q_1,Q_2\) and their union and intersection belong to
\(\K(\Sphere(L^\perp))\).  With the convention
\(\widehat\emptyset=\{0\}\), uniqueness of the direct-sum decomposition in
\(\R^{n+1}=L^\perp\oplus L\) gives
\[
 (\widehat Q_1+L)\cap(\widehat Q_2+L)
   =\widehat{Q_1\cap Q_2}+L,
 \qquad
 (\widehat Q_1+L)\cup(\widehat Q_2+L)
   =\widehat{Q_1\cup Q_2}+L.
\]
Intersecting with \(\Sphere^n\),
\[
 \Susp_L(Q_1\cup Q_2)=\Susp_L Q_1\cup\Susp_L Q_2,
 \qquad
 \Susp_L(Q_1\cap Q_2)=\Susp_L Q_1\cap\Susp_L Q_2.
\]
The valuation identity for \(\lambda\) gives the valuation identity for
\(\nu\).  For continuity, let \(Q_r\to Q\neq\emptyset\) in
\(\K(\Sphere(L^\perp))\); then \(Q_r\neq\emptyset\) for all sufficiently
large \(r\).  Choosing a unit vector \(\ell\in L\), use the
parametrization
\[
 (q,t)\longmapsto \cos t\,q+\sin t\,\ell,
 \qquad
 q\in Q,\quad -\frac{\pi}{2}\leq t\leq\frac{\pi}{2}.
\]
For \(q,q'\in\Sphere(L^\perp)\) and the same parameter \(t\),
\[
 \langle\cos t\,q+\sin t\,\ell,\cos t\,q'+\sin t\,\ell\rangle
 =\cos^2t\,\langle q,q'\rangle+\sin^2t
 \geq\langle q,q'\rangle.
\]
Since \(\arccos\) is decreasing, the corresponding suspension points
are no farther apart in spherical distance than \(q,q'\).
For each \(q_r\in Q_r\), choose \(q\in Q\) within distance
\(d_s(Q_r,Q)\), and keep \(t\) fixed in the parametrization.
Doing the same with \(Q_r,Q\) interchanged gives
\[
 d_s(\Susp_L Q_r,\Susp_L Q)\leq d_s(Q_r,Q)\longrightarrow0.
\]
Continuity of \(\lambda\) now gives \(\nu(Q_r)\to\nu(Q)\).  Continuity at
\(\emptyset\) follows because \(\emptyset\) is isolated.  Thus \(\nu\)
is a continuous valuation.

Every \(h\in SO(L^\perp)\) extends by the identity on \(L\) to an
element \(g\in SO(n+1)\).  Since
\(g(\Susp_L Q)=\Susp_L(hQ)\), \(\nu\) is
\(SO(L^\perp)\)-invariant.  If \(Q\neq\emptyset\) and
\(\dim Q\leq n-2\), then
\(\widehat{\Susp_L Q}=\widehat Q+L\) has dimension at most \(n\), so
\(\dim\Susp_L Q\leq n-1\), and simplicity gives \(\nu(Q)=0\).  Also
\[
 \Susp_L\Sphere(L^\perp)=\Sphere^n,
 \qquad
 \nu(\Sphere(L^\perp))=\lambda(\Sphere^n)=0.
\]
Thus \(\nu\) is a continuous, simple, \(SO(L^\perp)\)-invariant
valuation satisfying \(\nu(\Sphere(L^\perp))=0\).  Hence
\((\mathrm N_{n-1})\) gives \(\nu=0\).
\end{proof}

\begin{corollary}\label{cor:nonproper-zero}
For every non-proper \(K\in\K(\Sphere^n)\), \(\lambda(K)=0\).
\end{corollary}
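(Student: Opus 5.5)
We need to show that \(\lambda(K)=0\) for every non-proper \(K\in\K(\Sphere^n)\). The plan is to write the cone as a product with its lineality space and then reduce to Lemma~\ref{lem:suspension}.

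Let \(K\) be non-proper, so \(C=\widehat K\) is not pointed. Its lineality space \(\Lambda:=C\cap(-C)\) is therefore a nonzero linear subspace, and the standard decomposition gives
\[
 C=\Lambda+(C\cap\Lambda^\perp).
\]
Choose a one-dimensional subspace \(L\subset\Lambda\). We would like to write \(C=\widehat Q+L\) with \(\widehat Q\subset L^\perp\), where
\[
 \widehat Q:=C\cap L^\perp .
\]
This set is a closed convex cone. Given \(x\in C\), write \(x=x'+y\) with \(y\in L\) and \(x'\in L^\perp\). Then \(x'=x-y\in C\), because \(-y\in L\subset\Lambda\subset C\) and \(C\) is a convex cone closed under addition. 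Hence \(C=(C\cap L^\perp)+L\). Setting \(Q:=C\cap L^\perp\cap\Sphere^n\), with \(Q=\emptyset\) when \(C\cap L^\perp=\{0\}\), we get \(Q\in\K(\Sphere(L^\perp))\) and \(K=\Susp_L Q\).

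Lemma~\ref{lem:suspension} then gives \(\lambda(K)=\lambda(\Susp_L Q)=0\). The degenerate case \(Q=\emptyset\) is the case \(K=\Sphere(L)\), an antipodal pair. There \(\lambda(K)=0\) holds both by the lemma's convention \(\Susp_L\emptyset=\Sphere(L)\) and directly by simplicity.

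The only point requiring care is the decomposition \(C=(C\cap L^\perp)+L\). It uses nothing beyond \(L\subset C\cap(-C)\) and closure of \(C\) under addition, so no step is a real obstacle. Note also that the argument only needs a single line in the lineality space, not the full lineality space.
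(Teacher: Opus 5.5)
Your proposal is correct and follows essentially the same route as the paper: pick a line \(L\subset\widehat K\), show \(\widehat K=L+(\widehat K\cap L^\perp)\) using only \(L\subset\widehat K\) and closure under addition, set \(Q=\widehat K\cap L^\perp\cap\Sphere^n\) (possibly empty) so that \(K=\Susp_L Q\), and apply Lemma~\ref{lem:suspension}. The detour through the full lineality space is unnecessary but harmless, as you yourself note.
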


\begin{proof}
The cone \(C=\widehat K\) contains a line \(L\).  For every \(z\in C\),
its orthogonal projection \(\pi_Lz\) lies in \(L\), and
\(-\pi_Lz\in L\subset C\).  Thus \(z-\pi_Lz\in C\cap L^\perp\),
which proves
\[
 C=L+(C\cap L^\perp).
\]
This uses only \(L\subset C\), even if \(C\cap(-C)\) is larger than
\(L\).
Set \(Q=(C\cap L^\perp)\cap\Sphere(L^\perp)\), allowing
\(Q=\emptyset\) with \(\widehat\emptyset=\{0\}\).  Then
\(K=\Susp_L Q\), so Lemma~\ref{lem:suspension} gives the claim.
\end{proof}

\subsection{Continuity under degeneration}

For \(p_0,\ldots,p_n\in\Sphere^n\), define
\[
 \Delta(p_0,\ldots,p_n)
 :=
 \pos(p_0,\ldots,p_n)\cap\Sphere^n.
\]
This is a member of \(\K(\Sphere^n)\).  If the generators are linearly
independent, their positive hull is pointed, and
\(\Delta(p_0,\ldots,p_n)\) is therefore a proper nondegenerate spherical
\(n\)-simplex.
If the generators are dependent, then \(\Delta(p_0,\ldots,p_n)\) has
dimension at most \(n-1\), since it is contained in their linear span.

The simplex map is not continuous at all degenerate tuples.  This
phenomenon already occurs on \(\Sphere^1\).  Let
\[
 p_0(t)=(\cos t,\sin t),\qquad
 p_1(t)=(-\cos t,\sin t),\qquad 0<t<\pi/2.
\]
The corresponding spherical simplices are the closed arcs
\[
 \Delta_t=\Delta(p_0(t),p_1(t))
 =\{(\cos\theta,\sin\theta):t\leq\theta\leq\pi-t\}.
\]
As \(t\to0^+\), these arcs converge in Hausdorff distance to the closed
upper semicircle, whereas
\(\Delta(e_1,-e_1)=\{e_1,-e_1\}\) consists of only two antipodal points.
Arc length is a simple valuation and vanishes on this pair of points,
but the lengths of \(\Delta_t\) tend to \(\pi\).
Thus simplicity alone does not control the values at such limits.
Corollary~\ref{cor:nonproper-zero}
handles precisely the non-proper limits needed in the following lemma.

\begin{lemma}\label{lem:degeneration}
Suppose that \(p_i^{(r)}\to p_i\) for \(0\leq i\leq n\), every tuple
\((p_0^{(r)},\ldots,p_n^{(r)})\) is linearly independent, and
\(p_0,\ldots,p_n\) are linearly dependent.  Then
\[
 \lambda\bigl(\Delta(p_0^{(r)},\ldots,p_n^{(r)})\bigr)
 \longrightarrow0.
\]
\end{lemma}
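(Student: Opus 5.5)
The plan is a subsequence-and-compactness argument. Suppose the conclusion fails. Then there are \(\varepsilon>0\) and a subsequence along which \(|\lambda(\Delta_r)|\geq\varepsilon\), where \(\Delta_r:=\Delta(p_0^{(r)},\ldots,p_n^{(r)})\). By Lemma~\ref{lem:spherical-hyperspace-compactness} we may pass to a further subsequence with \(\Delta_r\to A\in\K(\Sphere^n)\); each \(\Delta_r\) is nonempty, so \(A\neq\emptyset\). Continuity of \(\lambda\) gives \(\lambda(\Delta_r)\to\lambda(A)\). It therefore suffices to show \(\lambda(A)=0\) for every such limit \(A\), which gives the contradiction.

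We split into two cases according to whether \(C:=\pos(p_0,\ldots,p_n)\) is pointed.

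\emph{Case 1: \(C\) is pointed.} Lemma~\ref{lem:pointed-generator-continuity} gives \(\Delta_r\to C\cap\Sphere^n=\Delta(p_0,\ldots,p_n)\), so \(A=\Delta(p_0,\ldots,p_n)\). Because the \(p_i\) are linearly dependent, \(A\) lies in the linear span of the \(p_i\) and has dimension at most \(n-1\). Simplicity then gives \(\lambda(A)=0\).

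\emph{Case 2: \(C\) is not pointed.} Here we want to show that \(A\) itself is non-proper, so that Corollary~\ref{cor:nonproper-zero} yields \(\lambda(A)=0\). Since \(C\) is not pointed, \(0\in\operatorname{conv}\{p_0,\ldots,p_n\}\). Equivalently, there is a nonzero \(x=\sum t_ip_i\in C\) with \(-x\in C\). Write \(-x=\sum s_ip_i\) with \(s_i\geq0\), and set \(x_r:=\sum t_ip_i^{(r)}\) and \(y_r:=\sum s_ip_i^{(r)}\). Then \(x_r,y_r\in\widehat{\Delta_r}\), with \(x_r\to x\) and \(y_r\to-x\). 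For large \(r\) both are nonzero, so \(x_r/\|x_r\|\in\Delta_r\) and \(y_r/\|y_r\|\in\Delta_r\). Hausdorff convergence then places the limits \(x/\|x\|\) and \(-x/\|x\|\) in \(A\). Since \(\widehat A\) is a convex cone, it contains the line \(\R x\), so \(A\) is not proper. Corollary~\ref{cor:nonproper-zero} gives \(\lambda(A)=0\).

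The main obstacle is Case 2. There the limit \(A\) may be strictly larger than \(\Delta(p_0,\ldots,p_n)\), as in the semicircle example, and it may even be full-dimensional. Simplicity alone therefore says nothing about \(\lambda(A)\). The key point is that Corollary~\ref{cor:nonproper-zero}, which rests on \((\mathrm N_{n-1})\) through Lemma~\ref{lem:suspension}, handles every limit containing an antipodal pair. The only thing to verify carefully is that the antipodal pair \(\pm x/\|x\|\) survives in the limit, and that is the explicit construction above. A small point to check is that some \(x\) with \(x\neq0\) and \(-x\in C\) exists; this follows from the definition of non-pointed, \(C\cap(-C)\neq\{0\}\).
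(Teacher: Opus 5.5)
Your proposal is correct and follows essentially the same route as the paper: the pointed case via Lemma~\ref{lem:pointed-generator-continuity} and simplicity, and the non-pointed case via compactness, showing that every subsequential limit is non-proper and then invoking Corollary~\ref{cor:nonproper-zero}. The only cosmetic difference is in how the limit is shown to be non-proper: the paper shows directly that each limit generator \(p_i\) lies in the limit set (since \(p_i^{(r)}\in\Delta_r\)), so its radial extension contains all of \(C\) and hence a line, whereas you track an explicit antipodal pair \(\pm x/\|x\|\).
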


\begin{proof}
Write \(\Delta_r=\Delta(p_0^{(r)},\ldots,p_n^{(r)})\) and
\(C=\pos(p_0,\ldots,p_n)\).  Suppose first that \(C\) is pointed.
Lemma~\ref{lem:pointed-generator-continuity} gives
\[
 \Delta_r\longrightarrow C\cap\Sphere^n
\]
in the spherical Hausdorff metric.  The limiting generators are
dependent, so \(C\cap\Sphere^n\) has dimension at most \(n-1\).
Simplicity and continuity give \(\lambda(\Delta_r)\to0\).

Suppose now that \(C\) is not pointed.  By
Lemma~\ref{lem:spherical-hyperspace-compactness}, every subsequence of
\((\Delta_r)\) has a further subsequence
\(\Delta_{r_j}\to K\) in \(\K(\Sphere^n)\).  The limit \(K\) is
nonempty because every \(\Delta_{r_j}\) is nonempty and the empty set
is isolated.  For each \(i\), the triangle inequality and
\(p_i^{(r_j)}\in\Delta_{r_j}\) give
\[
 \operatorname{dist}_s(p_i,K)
 \leq\operatorname{dist}_s(p_i,p_i^{(r_j)})
      +d_s(\Delta_{r_j},K)
 \longrightarrow0.
\]
Since \(K\) is closed, \(p_i\in K\) for every \(i\).
The convex cone \(\widehat K\) therefore contains all the generators
\(p_0,\ldots,p_n\), and hence their positive hull
\(C=\pos(p_0,\ldots,p_n)\).  As \(C\) contains a line, \(K\) is
non-proper, and continuity
together with Corollary~\ref{cor:nonproper-zero} gives
\(\lambda(\Delta_{r_j})\to\lambda(K)=0\).
The subsequence criterion yields \(\lambda(\Delta_r)\to0\).
\end{proof}

\subsection{The signed simplex cochain}

We use tuple cochains on the underlying set \(\Sphere^n\), not singular
cochains of the topological sphere.  A degree-\(k\) tuple cochain is a
function \(f:(\Sphere^n)^{k+1}\to\R\).  It is called \emph{continuous}
if it is continuous with respect to the product topology on
\((\Sphere^n)^{k+1}\), and \emph{alternating} if
\[
 f(x_{\pi(0)},\ldots,x_{\pi(k)})
 =
 \sgn(\pi)f(x_0,\ldots,x_k)
\]
for every permutation \(\pi\in S_{k+1}\), where \(S_{k+1}\) is the
symmetric group.

\begin{definition}\label{def:simplex-cochain}
Define the signed simplex cochain
\(c_\lambda:(\Sphere^n)^{n+1}\to\R\) by
\[
 c_\lambda(p_0,\ldots,p_n)
 :=
 \sgn\det(p_0,\ldots,p_n)\,
 \lambda\bigl(\Delta(p_0,\ldots,p_n)\bigr),
\tag{5.1}\label{eq:c-def}
\]
where \(\sgn(0)=0\), and the determinant is computed with the \(p_i\)
as columns in the standard orientation of \(\R^{n+1}\).
\end{definition}

\begin{corollary}\label{cor:c-continuous}
The cochain \(c_\lambda\) is continuous, alternating, and
\(SO(n+1)\)-invariant.
\end{corollary}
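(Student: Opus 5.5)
The plan is to check the three properties separately, with continuity being the only one that needs real work. Alternation comes directly from the definition. Permuting the generators leaves the positive hull \(\pos(p_0,\ldots,p_n)\), and hence \(\Delta(p_0,\ldots,p_n)\), unchanged. Meanwhile \(\det\) picks up the factor \(\sgn(\pi)\), so \(\sgn\det\) does as well; this also holds when the determinant is zero. Invariance is just as direct. For \(g\in SO(n+1)\) we have \(\Delta(gp_0,\ldots,gp_n)=g\Delta(p_0,\ldots,p_n)\) because \(g\) is linear. Also \(\det(gp_0,\ldots,gp_n)=\det g\cdot\det(p_0,\ldots,p_n)\) with \(\det g=1\), and \(\lambda\) is \(SO(n+1)\)-invariant, so \(c_\lambda(gp_0,\ldots,gp_n)=c_\lambda(p_0,\ldots,p_n)\).

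For continuity, take a sequence of tuples \(p^{(r)}\to p\) in \((\Sphere^n)^{n+1}\) and split into two cases according to the limit tuple \(p\).

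In the first case \(p\) is linearly independent. Then \(\det(p)\neq0\), and continuity of the determinant gives \(\sgn\det(p^{(r)})=\sgn\det(p)\) for large \(r\). The cone \(\pos(p_0,\ldots,p_n)\) is pointed, since the generators are independent. Lemma~\ref{lem:pointed-generator-continuity} therefore gives \(\Delta(p^{(r)})\to\Delta(p)\) in the spherical Hausdorff metric, and continuity of \(\lambda\) gives convergence of \(c_\lambda\).

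In the second case \(p\) is dependent, so \(c_\lambda(p)=0\). Split the sequence into the subsequence of dependent tuples and the subsequence of independent ones. On dependent tuples \(c_\lambda=0\) by the convention \(\sgn(0)=0\). On independent tuples \(|c_\lambda(p^{(r)})|=|\lambda(\Delta(p^{(r)}))|\), which tends to \(0\) by Lemma~\ref{lem:degeneration}. If one of the two subsequences is finite the argument is unchanged, because a sequence converges when each of finitely many complementary subsequences converges to the same limit.

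The main obstacle is this degenerate case. Simplicity alone fails there, as the \(\Sphere^1\) example before Lemma~\ref{lem:degeneration} shows. The difficulty, however, is already handled by Lemma~\ref{lem:degeneration}, which rests on Corollary~\ref{cor:nonproper-zero}. What remains is only to note that the sign factor is bounded by \(1\) in absolute value, so the case needs no control of the sign.
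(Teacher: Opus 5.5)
Your proposal is correct and follows the paper's proof essentially step for step: you split according to whether the limit tuple is independent (Lemma~\ref{lem:pointed-generator-continuity} plus eventual agreement of determinant signs) or dependent (dependent approximants give zero, independent ones are handled by Lemma~\ref{lem:degeneration}), and you read alternation and invariance off the definition. Your explicit splitting into dependent and independent subsequences only makes precise what the paper does implicitly, since Lemma~\ref{lem:degeneration} is stated for sequences of independent tuples.
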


\begin{proof}
Let \(p_i^{(r)}\to p_i\) for \(0\leq i\leq n\).  Suppose first that
\(p_0,\ldots,p_n\) are linearly independent.  Then
\(C=\pos(p_0,\ldots,p_n)\) is pointed, so
Lemma~\ref{lem:pointed-generator-continuity} gives
\[
 \Delta(p_0^{(r)},\ldots,p_n^{(r)})
 \longrightarrow
 \Delta(p_0,\ldots,p_n)
\]
in the spherical Hausdorff metric.  Moreover,
\[
 \det(p_0^{(r)},\ldots,p_n^{(r)})
 \longrightarrow
 \det(p_0,\ldots,p_n)\neq0,
\]
so the determinant signs agree for all sufficiently large \(r\).
Continuity of \(\lambda\) therefore gives continuity of \(c_\lambda\)
at the tuple.

Now suppose that \(p_0,\ldots,p_n\) are linearly dependent.  The value
of \(c_\lambda\) at the limiting tuple is zero.  The approximating
dependent tuples also have value zero, while along the independent
tuples, Lemma~\ref{lem:degeneration} gives
\[
 \left\lvert
 c_\lambda(p_0^{(r)},\ldots,p_n^{(r)})
 \right\rvert
 =
 \left\lvert
 \lambda\bigl(\Delta(p_0^{(r)},\ldots,p_n^{(r)})\bigr)
 \right\rvert
 \longrightarrow0.
\]
Thus \(c_\lambda\) is continuous.  Permuting the generators leaves the
spherical simplex unchanged and multiplies the determinant by the sign
of the permutation, so \(c_\lambda\) is alternating.  Its
\(SO(n+1)\)-invariance follows from \(\det g=1\) and the invariance of
\(\lambda\).
\end{proof}

\section{The simplex cocycle and an averaged primitive}
\label{sec:primitive}

For a degree-\(k\) tuple cochain \(f\), define its coboundary by
\[
 (\delta f)(x_0,\ldots,x_{k+1})
 =
 \sum_{i=0}^{k+1}(-1)^i
 f(x_0,\ldots,\widehat x_i,\ldots,x_{k+1}).
\]
Here a hat over an entry means that the entry is omitted.  As usual,
\(\delta^2=0\), and \(\delta\) preserves alternating cochains.  A cochain
\(f\) is a \emph{cocycle} if \(\delta f=0\); if \(c=\delta b\), then \(b\)
is called a \emph{primitive} of \(c\).

We prove that the continuous signed simplex cochain
\(c_\lambda\) constructed in Section~\ref{sec:cochain} is a cocycle and
construct an averaged primitive \(b_\lambda\) satisfying
\(c_\lambda=\delta b_\lambda\).  The argument combines
Lemma~\ref{lem:finite-cuts} with a signed cone identity obtained by counting
the endpoints of an interval of nonnegative representations.

\subsection{The signed cone identity}

Call \(q_0,\ldots,q_{n+1}\in\Sphere^n\) \emph{in general position} if
every \(n+1\) of the vectors are linearly independent.

\begin{lemma}\label{lem:general-position}
The tuples in general position form an open dense subset of
\((\Sphere^n)^{n+2}\).
\end{lemma}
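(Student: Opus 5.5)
The set of tuples in general position is a finite intersection of sets defined by nonvanishing determinants, so I will treat openness and density separately. For each index \(k\in\{0,\ldots,n+1\}\), set
\[
 D_k(q_0,\ldots,q_{n+1}):=\det(q_0,\ldots,\widehat q_k,\ldots,q_{n+1}).
\]
Here the hat means omission, as in the coboundary formula. Each \(D_k\) is a polynomial, hence continuous, function on \((\Sphere^n)^{n+2}\subset(\R^{n+1})^{n+2}\). The general-position set is \(\bigcap_{k}\{D_k\neq0\}\). This is a finite intersection of preimages of the open set \(\R\setminus\{0\}\) under continuous maps, and is therefore open.

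For density, I will use the following fact. A finite intersection of open dense sets is dense, so it suffices to show that each \(\{D_k\neq0\}\) is dense. By the symmetry between indices it is enough to treat one \(k\). I will actually argue directly and inductively instead, by perturbing the points one at a time.

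Fix a tuple \((q_0,\ldots,q_{n+1})\) and \(\varepsilon>0\). I will choose \(q_0',q_1',\ldots\) successively, with \(\operatorname{dist}_s(q_j',q_j)<\varepsilon\), maintaining the following invariant: every subfamily of at most \(n+1\) of the vectors \(q_0',\ldots,q_j'\) is linearly independent. Suppose \(q_0',\ldots,q_{j-1}'\) have been chosen. The forbidden set for \(q_j'\) is the union of the linear spans of the subfamilies of size at most \(n\) among \(q_0',\ldots,q_{j-1}'\). These are finitely many proper linear subspaces of \(\R^{n+1}\), so each meets \(\Sphere^n\) in a great subsphere of dimension at most \(n-1\), which is closed with empty interior in \(\Sphere^n\). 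By Baire, or more elementarily since a finite union of closed nowhere dense sets is nowhere dense, the open ball of radius \(\varepsilon\) about \(q_j\) contains a point \(q_j'\) outside this union. Adjoining \(q_j'\) preserves the invariant. Indeed, a new subfamily of size at most \(n+1\) containing \(q_j'\) is independent, because \(q_j'\) avoids the span of the other members, which form an independent family of size at most \(n\). After \(n+2\) steps the tuple is in general position and lies within \(\varepsilon\) of the original in the product metric.

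The only point needing care is that a proper subspace meets \(\Sphere^n\) in a nowhere dense set. This holds because any \(x\) in the subspace \(U\) can be approximated by \(x+tv\), normalized, where \(v\notin U\) and \(t\to0\); these points lie on \(\Sphere^n\) but not in \(U\). Everything else in the argument is routine.
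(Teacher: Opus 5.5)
Your proof is correct and follows essentially the same route as the paper: openness from continuity of the $n+2$ maximal minors, and density by choosing the points one at a time so that each avoids finitely many great subspheres, which are closed with empty interior in $\Sphere^n$. Your uniform invariant (every subfamily of size at most $n+1$ is independent) produces exactly the paper's forbidden sets, namely the span of the earlier points for the first $n+1$ choices and the $n+1$ hyperplanes for the last; the aside about intersecting the dense sets $\{D_k\neq0\}$ is never used and can be dropped.
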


\begin{proof}
Openness follows from continuity of the relevant determinants.
For density, let
\(U_0,\ldots,U_{n+1}\) be arbitrary nonempty open subsets of
\(\Sphere^n\).  A linear hyperplane meets \(\Sphere^n\) in a
closed set with empty spherical interior.  Choose \(p_0\in U_0\)
and then, for \(1\leq i\leq n\), choose
\(p_i\in U_i\setminus\operatorname{span}\{p_0,\ldots,p_{i-1}\}\).
Each excluded span lies in a proper hyperplane, so these choices are
possible and \(p_0,\ldots,p_n\) are linearly independent.  The linear
spans of their \(n\)-element subsets are hyperplanes.
The union of their intersections with
\(\Sphere^n\) has empty interior, so choose
\(p_{n+1}\in U_{n+1}\) outside this union.  Omitting any one vector
from the resulting tuple leaves a linearly
independent tuple.  Thus \(U_0\times\cdots\times U_{n+1}\) contains
a tuple in general position, proving density.
\end{proof}

For a tuple \(q_0,\ldots,q_{n+1}\) in general position, define
\[
 \begin{aligned}
 \Delta_i&=\Delta(q_0,\ldots,\widehat q_i,\ldots,q_{n+1}),\\
 d_i&=(-1)^i\det(q_0,\ldots,\widehat q_i,\ldots,q_{n+1}).
 \end{aligned}
\]
All \(d_i\) are nonzero.  Set
\[
 Z:=\bigcup_{0\leq i<j\leq n+1}
 \left(\operatorname{span}\{q_k:k\neq i,j\}\cap\Sphere^n\right).
\]
Each subspace in this union is spanned by \(n\) linearly independent
vectors, so \(Z\) is a finite union of great hyperspheres.

\begin{lemma}[Signed cone identity]\label{lem:radial-boundary-chain}
For every \(y\in\Sphere^n\setminus Z\),
\[
 \sum_{i=0}^{n+1}\sgn(d_i)\indicator_{\Delta_i}(y)
 =\gamma,
 \tag{6.1}\label{eq:radial-boundary-chain}
\]
where
\[
 \gamma=
 \begin{cases}
  1,&d_i>0\text{ for every }i,\\
  -1,&d_i<0\text{ for every }i,\\
  0,&\text{otherwise}.
 \end{cases}
\]
\end{lemma}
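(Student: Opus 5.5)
Fix \(y\in\Sphere^n\setminus Z\). The plan is to describe the set of nonnegative representations of \(y\) by the \(q_i\) as an interval, and to read off the indicator sum from the signs at its two endpoints.

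\textbf{The linear relation.} Since \(q_0,\ldots,q_{n+1}\) are \(n+2\) vectors in \(\R^{n+1}\), they satisfy a linear relation. By Cramer's rule (expanding the \((n+2)\times(n+2)\) determinant with a repeated row), this relation is
\[
 \sum_{i=0}^{n+1}d_iq_i=0 .
\]
All coefficients \(d_i\) are nonzero, and the relation is unique up to scaling because any \(n+1\) of the vectors are independent. Hence every representation \(y=\sum_i a_iq_i\) has the form \(a(t)=a^*+t\,d\) for \(t\in\R\), where \(a^*\) is one fixed solution and \(d=(d_0,\ldots,d_{n+1})\).

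\textbf{Membership via zero coefficients.} We have \(y\in\Delta_i\) exactly when some representation has \(a_i=0\) and all other \(a_k\geq0\). The condition \(y\notin Z\) controls the zero coefficients. If a representation had two zero coefficients \(a_i=a_j=0\), then \(y\) would lie in \(\operatorname{span}\{q_k:k\neq i,j\}\), which is impossible. Let \(T=\{t:a(t)\geq0\}\); this is a closed interval, possibly empty, possibly unbounded.

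\textbf{The three cases.}
\begin{enumerate}
\item If all \(d_i>0\), then \(T\) is a ray \([t_0,\infty)\) or empty. If all \(d_i<0\), then \(T\) is \((-\infty,t_0]\) or empty.
\item If the signs are mixed, \(T\) is bounded: a compact interval \([t_-,t_+]\) or empty.
\end{enumerate}
At a finite endpoint exactly one coefficient vanishes, say \(a_i\), and this is precisely the index \(i\) with \(y\in\Delta_i\). The endpoints are the only places where a coefficient vanishes on \(T\), since inside \(T\) all coefficients are strictly positive. Conversely, every \(i\) with \(y\in\Delta_i\) arises from some endpoint of \(T\).

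We must check that distinct endpoints give distinct indices. Suppose both endpoints gave the same \(i\). Then \(a_i(t)=a^*_i+td_i\) would vanish at two values of \(t\), contradicting \(d_i\neq0\). A degenerate interval \(t_-=t_+\) is also excluded: there two coefficients would cross zero at the same parameter, so two coefficients would vanish simultaneously, which \(y\notin Z\) forbids.

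\textbf{Signs at the endpoints.} At the left endpoint \(t_-\), the vanishing coefficient increases with \(t\), so its \(d_i>0\). At the right endpoint \(t_+\), the vanishing coefficient has \(d_i<0\). The signed sum \(\sum_i\sgn(d_i)\indicator_{\Delta_i}(y)\) therefore equals the number of left endpoints minus the number of right endpoints. This gives:
\begin{enumerate}
\item If \(T\) is compact, the two contributions cancel, giving \(0\).
\item If \(T=[t_0,\infty)\), which happens in the all-positive case, the sum is \(1\).
\item If \(T=(-\infty,t_0]\), which happens in the all-negative case, the sum is \(-1\).
\end{enumerate}

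\textbf{Main obstacle.} Nonemptiness of \(T\) in the all-same-sign cases must be verified. If all \(d_i>0\), then \(\sum_i d_iq_i=0\), so \(\pos(q_0,\ldots,q_{n+1})\) contains the negatives of its generators and hence equals \(\R^{n+1}\). Thus every \(y\) has a nonnegative representation and \(T\neq\emptyset\). If \(T\) is empty in the mixed case, the sum is \(0\), consistent with \(\gamma=0\).

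I expect the main obstacle to be the careful bookkeeping of the endpoint and sign correspondence, in particular excluding degenerate intervals using \(Z\).
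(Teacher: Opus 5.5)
Your proof is correct and follows essentially the same route as the paper. The kernel of \((q_0\ \cdots\ q_{n+1})\) is spanned by \((d_0,\ldots,d_{n+1})\), the nonnegative representations of \(y\) form an interval whose finite endpoints each carry exactly one vanishing coefficient because \(y\notin Z\), and left and right endpoints contribute \(\pm1\) according to \(\sgn(d_i)\). Your ``main obstacle'' is in fact automatic: when all \(d_i\) have the same sign, every constraint is a lower bound (respectively, an upper bound) on \(t\), so \(T\) is a nonempty half-line without any appeal to \(\pos(q_0,\ldots,q_{n+1})=\R^{n+1}\).
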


\begin{proof}
Put \(A=(q_0\ \cdots\ q_{n+1})\) and
\(\mathbf d=(d_0,\ldots,d_{n+1})^{\mathsf T}\).
Cofactor expansion gives \(A\mathbf d=0\).  Since
\(\operatorname{rank}A=n+1\), its kernel is \(\R\mathbf d\).
Fix \(y\notin Z\), and choose a solution \(t^0\) of \(At=y\).
All solutions are \(t=t^0+s\mathbf d\), \(s\in\R\), and the
nonnegative solutions are parametrized by
\[
 I_y:=\{s\in\R:t_i^0+s d_i\geq0\text{ for every }i\}.
\]
The \(i\)-th inequality reads \(s\geq-t_i^0/d_i\) when \(d_i>0\)
and \(s\leq-t_i^0/d_i\) when \(d_i<0\).  Thus \(I_y\) is a closed
interval, possibly empty or unbounded.

No feasible solution can have two zero coordinates: if \(t_i=t_j=0\),
then \(y=\sum_{k\neq i,j}t_kq_k\) would belong to \(Z\).
Every finite endpoint of \(I_y\) has a zero coordinate, since strict
positivity of all coordinates would make it an interior point.
Consequently, exactly one coordinate vanishes at each finite endpoint.
The interval cannot be a singleton, since then a lower and an upper
bound would both be attained, giving two zero coordinates.

By the definition of \(\Delta_i\), membership \(y\in\Delta_i\) is
equivalent to the existence of a nonnegative solution of \(At=y\)
with \(t_i=0\).  Since \(d_i\neq0\), this occurs at the unique
parameter \(s=-t_i^0/d_i\), which is a left endpoint when \(d_i>0\)
and a right endpoint when \(d_i<0\).  Thus each left endpoint
contributes \(+1\) to the sum in \eqref{eq:radial-boundary-chain},
and each right endpoint contributes \(-1\).

If the \(d_i\) have both signs, then \(I_y\) is empty or a bounded
nondegenerate interval; in the latter case, the two endpoint contributions
cancel.  If all \(d_i\) have the same sign, then \(I_y\) is a nonempty
half-line whose unique endpoint contributes that common sign.
In either case, the sum equals \(\gamma\).
\end{proof}

\subsection{The cocycle identity and averaging}

\begin{lemma}\label{lem:circuit}
For all \(q_0,\ldots,q_{n+1}\in\Sphere^n\),
\[
 \sum_{i=0}^{n+1}(-1)^i
 c_\lambda(q_0,\ldots,\widehat q_i,\ldots,q_{n+1})=0.
\tag{6.2}\label{eq:circuit}
\]
\end{lemma}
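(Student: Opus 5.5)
The plan is to prove \eqref{eq:circuit} first for tuples in general position, and then pass to all tuples using continuity of \(c_\lambda\) (Corollary~\ref{cor:c-continuous}) and density (Lemma~\ref{lem:general-position}). The left side of \eqref{eq:circuit} is a finite sum of continuous functions on \((\Sphere^n)^{n+2}\), so it is continuous. Hence vanishing on the dense set of general-position tuples gives vanishing everywhere.

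Fix a tuple in general position, with \(\Delta_i\) and \(d_i\) as above. By Definition~\ref{def:simplex-cochain},
\[
 (-1)^ic_\lambda(q_0,\ldots,\widehat q_i,\ldots,q_{n+1})
 =\sgn(d_i)\,\lambda(\Delta_i),
\]
since \((-1)^i\sgn\det(\ldots)=\sgn(d_i)\). So the claim is \(\sum_i\sgn(d_i)\lambda(\Delta_i)=0\). We compare this with the indicator identity in Lemma~\ref{lem:radial-boundary-chain}, which holds off the finite union \(Z\) of great hyperspheres.

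There are two cases. If \(\gamma=0\), then \(\sum_i\sgn(d_i)\indicator_{\Delta_i}=0\) off \(Z\). The sets \(\Delta_i\) are spherical polytopes and \(\lambda\) is simple, so Lemma~\ref{lem:finite-cuts} gives the result directly. If \(\gamma=\pm1\), all \(d_i\) share a sign. Then \(A\mathbf d=0\) with \(\mathbf d\) of constant sign shows that \(0\) is a positive combination of all the \(q_i\). Hence \(C=\pos(q_0,\ldots,q_{n+1})\) contains a line. Indeed, it is all of \(\R^{n+1}\): the \(q_i\) span \(\R^{n+1}\) and each \(-q_j\) is a positive combination of the others, so \(C\) is a subspace. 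Thus \(\Sphere^n=\pos(\ldots)\cap\Sphere^n\), and the identity reads \(\sum_i\sgn(d_i)\indicator_{\Delta_i}-\gamma\indicator_{\Sphere^n}=0\) off \(Z\).

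The main obstacle is that \(\Sphere^n\) is not proper. Lemma~\ref{lem:finite-cuts}, however, is stated for spherical polytopes in \(\K(\Sphere^n)\), not necessarily proper ones, and \(\Sphere^n\) is a spherical polytope: its cone is the whole space, an empty intersection of halfspaces. It is full-dimensional with no facets, so cutting applies with \(\rho=\lambda\) on \(\K(\Sphere^n)\). This yields
\[
 \sum_i\sgn(d_i)\lambda(\Delta_i)=\gamma\lambda(\Sphere^n)=0
\]
by the standing hypothesis \(\lambda(\Sphere^n)=0\). Alternatively, Corollary~\ref{cor:nonproper-zero} gives \(\lambda(\Sphere^n)=0\) as well. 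I would double-check the chamber-decomposition step of Lemma~\ref{lem:finite-cuts} for this full-sphere case, where \(\lambda(\Sphere^n)\) is the sum over all chambers. This is consistent with the valuation being simple.
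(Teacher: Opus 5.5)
Your proposal is correct and follows the paper's proof. For tuples in general position, you turn the signed cone identity into \(\sum_i\sgn(d_i)\lambda(\Delta_i)=\gamma\lambda(\Sphere^n)=0\) via the finite cutting lemma, which does allow the non-proper polytope \(\Sphere^n\); you then extend to all tuples by continuity of \(c_\lambda\) and density of general position. Your case split on \(\gamma\) and your check that \(\pos(q_0,\ldots,q_{n+1})=\R^{n+1}\) are correct but not needed: the paper applies the cutting lemma in one step to \(\sum_i\sgn(d_i)\indicator_{\Delta_i}-\gamma\indicator_{\Sphere^n}\), since \(\indicator_{\Sphere^n}\) is simply the constant function \(1\).
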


\begin{proof}
For a tuple in general position, let \(\gamma\) be the constant from
Lemma~\ref{lem:radial-boundary-chain}.  That lemma gives the indicator relation
\(\sum_i\sgn(d_i)\indicator_{\Delta_i}-\gamma\indicator_{\Sphere^n}=0\)
on \(\Sphere^n\setminus Z\).  Applying
Lemma~\ref{lem:finite-cuts} with \(\rho=\lambda\) yields
\[
 (\delta c_\lambda)(q_0,\ldots,q_{n+1})
 =\sum_{i=0}^{n+1}\sgn(d_i)\lambda(\Delta_i)
 =\gamma\lambda(\Sphere^n)=0.
\]

By Lemma~\ref{lem:general-position}, tuples in general position are
dense.  Since \(c_\lambda\) is continuous by
Corollary~\ref{cor:c-continuous}, so is \(\delta c_\lambda\), and the
identity holds for every tuple.
\end{proof}

Thus \(c_\lambda\) is an \(n\)-cocycle in the alternating tuple cochain
complex.  We apply the standard averaging contraction for this complex.
Let \(\sigma_n\) be the \(SO(n+1)\)-invariant Borel probability
measure on \(\Sphere^n\), so that \(\sigma_n(\Sphere^n)=1\).  Define
\[
 b_\lambda(p_0,\ldots,p_{n-1})
 :=
 \int_{\Sphere^n}
 c_\lambda(x,p_0,\ldots,p_{n-1})\,\dd\sigma_n(x).
\tag{6.3}\label{eq:b-def}
\]
Uniform continuity of \(c_\lambda\) on the compact space
\((\Sphere^n)^{n+1}\) shows that integration in the first variable
preserves continuity, and alternation is immediate.  If
\(g\in SO(n+1)\), the change of variables \(x=gy\), together with the
invariance of \(\sigma_n\) and \(c_\lambda\), gives
\[
 b_\lambda(gp_0,\ldots,gp_{n-1})
 =
 b_\lambda(p_0,\ldots,p_{n-1}).
\]
Thus \(b_\lambda\) is a continuous, alternating,
\(SO(n+1)\)-invariant cochain of degree \(n-1\).

\begin{lemma}\label{lem:contraction}
For all \(p_0,\ldots,p_n\in\Sphere^n\),
\[
 c_\lambda(p_0,\ldots,p_n)
 =
 \sum_{i=0}^{n}(-1)^i
 b_\lambda(p_0,\ldots,\widehat p_i,\ldots,p_n).
\tag{6.4}\label{eq:contraction}
\]
In cochain notation, \(c_\lambda=\delta b_\lambda\).
\end{lemma}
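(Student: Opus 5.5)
The plan is to use the cocycle identity of Lemma~\ref{lem:circuit} with the integration variable \(x\) placed first, and then integrate over \(x\) with respect to \(\sigma_n\).

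Fix \(p_0,\ldots,p_n\in\Sphere^n\) and \(x\in\Sphere^n\). Apply Lemma~\ref{lem:circuit} to the \((n+2)\)-tuple \((q_0,\ldots,q_{n+1})=(x,p_0,\ldots,p_n)\). The term with \(i=0\) omits \(x\) and contributes \(c_\lambda(p_0,\ldots,p_n)\). For \(i=j+1\) with \(0\leq j\leq n\), the term omits \(p_j\) and carries the sign \((-1)^{j+1}\). This gives the pointwise identity
\[
 c_\lambda(p_0,\ldots,p_n)
 =\sum_{j=0}^{n}(-1)^{j}
 c_\lambda(x,p_0,\ldots,\widehat p_j,\ldots,p_n),
\]
valid for every \(x\in\Sphere^n\). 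The left side does not depend on \(x\).

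Next, integrate both sides in \(x\) against the probability measure \(\sigma_n\). The left side is unchanged because \(\sigma_n(\Sphere^n)=1\). On the right, each summand is a continuous, hence bounded and Borel, function of \(x\) by Corollary~\ref{cor:c-continuous}. So integration commutes with the finite sum. By the definition \eqref{eq:b-def}, the \(j\)-th integral equals \(b_\lambda(p_0,\ldots,\widehat p_j,\ldots,p_n)\). This yields \eqref{eq:contraction}, that is, \(c_\lambda=\delta b_\lambda\).

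There is no real obstacle here. The only point to check is the index and sign bookkeeping when \(x\) is inserted at position \(0\): the omitted \(p_j\) sits at position \(j+1\), and the sign \((-1)^{j+1}\) becomes \((-1)^j\) after moving the \(i=0\) term to the other side. The cocycle identity is already known for all tuples, including degenerate ones, by the density argument in Lemma~\ref{lem:circuit}. Therefore no general-position restriction on \(x\) is needed, and no exceptional null set has to be discarded.
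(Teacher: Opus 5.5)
Your proposal is correct and matches the paper's proof: apply the cocycle identity of Lemma~\ref{lem:circuit} to \((x,p_0,\ldots,p_n)\), isolate the term omitting \(x\), and integrate in \(x\) against \(\sigma_n\). Your sign bookkeeping and the remark that Lemma~\ref{lem:circuit} already holds for all tuples, so no exceptional null set in \(x\) needs to be removed, are both accurate.
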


\begin{proof}
Apply \eqref{eq:circuit} to
\((x,p_0,\ldots,p_n)\).  Isolating the term that omits \(x\) gives
\[
 c_\lambda(p_0,\ldots,p_n)
 =
 \sum_{i=0}^{n}(-1)^i
 c_\lambda(x,p_0,\ldots,\widehat p_i,\ldots,p_n).
\]
Integration with respect to \(x\) yields
\eqref{eq:contraction}.
\end{proof}

\section{The signed coning transform}\label{sec:transform}

Continue with \(n\geq2\), \(\lambda\), and \((\mathrm N_{n-1})\) as in
Section~\ref{sec:cochain}.  Section~\ref{sec:primitive} constructed
\(b_\lambda\) with \(c_\lambda=\delta b_\lambda\).  We identify the values
of \(b_\lambda\) with signed coning transforms on great hyperspheres.
Vanishing on non-proper sets ensures that each transform vanishes on
its full great hypersphere.  Statement \((\mathrm N_{n-1})\) then makes
the transform identically zero, yielding vanishing on spherical
\(n\)-simplices.

\subsection{Spherical coning}

Fix an \(n\)-dimensional subspace \(W\subset\R^{n+1}\), set
\[
 E=\Sphere(W)\cong\Sphere^{n-1},
\]
and choose a unit normal \(n_E\in W^\perp\).  For \(Q\in\K(E)\) and
\(x\in\Sphere^n\setminus E\), define the spherical coning operation
with apex \(x\) by
\[
 x*Q:=
 \bigl(\widehat Q+\R_{\geq0}x\bigr)\cap\Sphere^n.
\]
Since \(\R^{n+1}=W\oplus\R x\), the Minkowski sum
\(\widehat Q+\R_{\geq0}x\) is a closed convex cone; hence
\(x*Q\in\K(\Sphere^n)\).  For the empty base,
\(x*\emptyset=\{x\}\), as follows from
\(\widehat\emptyset=\{0\}\).

\begin{lemma}\label{lem:coning-continuity}
If \(x_r\to x\) in \(\Sphere^n\setminus E\) and
\(Q_r\to Q\) in \(\K(E)\), then
\[
 x_r*Q_r\longrightarrow x*Q
 \quad\text{in }\K(\Sphere^n).
\]
\end{lemma}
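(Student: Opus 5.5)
We prove Hausdorff convergence directly, by parametrizing the cone \(x*Q\) and matching points of \(x_r*Q_r\) with points of \(x*Q\) through a uniformly continuous normalized map.

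First, the empty base is isolated. If \(Q=\emptyset\), then \(Q_r=\emptyset\) eventually, so \(x_r*Q_r=\{x_r\}\to\{x\}=x*Q\). Assume from now on that \(Q\neq\emptyset\), hence \(Q_r\neq\emptyset\) eventually.

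Every point of \(x*Q\) can be written as the normalization of \(\cos\theta\,q+\sin\theta\,x\) with \(q\in Q\) and \(\theta\in[0,\pi/2]\). Indeed, a nonzero element of \(\widehat Q+\R_{\geq0}x\) has the form \(aq+bx\) with \(a,b\geq0\) not both zero, which we rescale. Define
\[
 \Phi(y,q,\theta):=\frac{\cos\theta\,q+\sin\theta\,y}{\|\cos\theta\,q+\sin\theta\,y\|}
\]
on the set of \((y,q,\theta)\) with \(y\in\Sphere^n\setminus E\), \(q\in E\), and \(\theta\in[0,\pi/2]\). Then
\[
 x*Q=\Phi(x,Q,[0,\pi/2]),\qquad x_r*Q_r=\Phi(x_r,Q_r,[0,\pi/2]).
\]

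The key step, and the only real obstacle, is a uniform lower bound on the denominator. The worry is that \(\cos\theta\,q+\sin\theta\,y\) might be near zero. Write \(y=\alpha n_E+w\) with \(w\in W\) and \(|\alpha|=|\langle y,n_E\rangle|\). Then
\[
 |\langle \cos\theta\,q+\sin\theta\,y,\,n_E\rangle|=\sin\theta\,|\alpha|.
\]
The projection of \(\cos\theta\,q+\sin\theta\,y\) onto \(W\) is \(\cos\theta\,q+\sin\theta\,w\), whose norm is at least \(\cos\theta-\sin\theta\,\|w\|\). Since \(\|w\|=\sqrt{1-\alpha^2}<1\), the norm of the vector is bounded below by
\[
 \max\bigl(\sin\theta\,|\alpha|,\ \cos\theta-\sin\theta\sqrt{1-\alpha^2}\bigr),
\]
which is a positive continuous function of \((\theta,|\alpha|)\) for \(|\alpha|>0\). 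Because \(x\notin E\), we have \(|\langle x_r,n_E\rangle|\geq\alpha_0>0\) for all large \(r\). Compactness of \([0,\pi/2]\times[\alpha_0,1]\) then gives a uniform constant \(c>0\) bounding the denominator from below for all such \(y\), all \(q\in E\), and all \(\theta\). Consequently \(\Phi\) is uniformly continuous on the compact set
\[
 \{y:|\langle y,n_E\rangle|\geq\alpha_0\}\times E\times[0,\pi/2],
\]
using the normalization estimate \(\|x/\|x\|-y/\|y\|\|\leq2\|x-y\|/\|x\|\) from the proof of Lemma~\ref{lem:pointed-generator-continuity}.

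With uniform continuity in hand, the matching is routine. Given \(\varepsilon>0\), choose \(\delta>0\) from uniform continuity of \(\Phi\). For \(r\) large we have \(\operatorname{dist}_s(x_r,x)<\delta\) and \(d_s(Q_r,Q)<\delta\).
\begin{itemize}
\item Each point \(\Phi(x_r,q_r,\theta)\) of \(x_r*Q_r\) is matched with \(\Phi(x,q,\theta)\), where \(q\in Q\) is within \(\delta\) of \(q_r\).
\item Symmetrically, each point \(\Phi(x,q,\theta)\) of \(x*Q\) is matched with \(\Phi(x_r,q_r,\theta)\) for some \(q_r\in Q_r\) within \(\delta\) of \(q\).
\end{itemize}
In both directions the matched points lie within \(\varepsilon\) of each other. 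This gives \(d_s(x_r*Q_r,x*Q)\leq\varepsilon\) eventually, which is the claimed convergence in \(\K(\Sphere^n)\).
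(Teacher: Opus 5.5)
Your proof is correct and takes essentially the same approach as the paper. Both parametrize the cone along segments from \(q\) to the apex (you use \((\cos\theta,\sin\theta)\), the paper uses \((1-s,s)\)). Both obtain a uniform positive lower bound on the unnormalized norm for apexes in a compact set avoiding \(E\), and both match points of \(x_r*Q_r\) and \(x*Q\) at the same parameter through the normalization estimate. The differences are cosmetic: your bound comes from the \(n_E\)- and \(W\)-components plus compactness rather than the paper's explicit \(\|(1-s)q+sx\|^2\geq(1-a)/2\), and you invoke uniform continuity where the paper writes out an explicit Lipschitz constant.
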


\begin{proof}
For nonempty \(Q\),
\[
 x*Q=\varphi\bigl(\{x\}\times Q\times[0,1]\bigr),
 \qquad
 \varphi(x,q,s)=
 \frac{(1-s)q+sx}{\|(1-s)q+sx\|}.
\]
To prove \(x_r*Q_r\to x*Q\) in Hausdorff distance, it suffices to show that
\[
 \sup_{y\in x_r*Q_r}\operatorname{dist}_s(y,x*Q)\longrightarrow0,
 \qquad
 \sup_{y\in x*Q}\operatorname{dist}_s(y,x_r*Q_r)\longrightarrow0.
\]
For a nonempty compact \(U\subset\Sphere^n\setminus E\), let
\(a=\max_{x\in U}\|\pi_Wx\|<1\), where \(\pi_W\) is orthogonal
projection onto \(W\).  For \(x\in U\) and \(q\in E\),
\[
 \langle q,x\rangle=\langle q,\pi_Wx\rangle
 \geq-\|\pi_Wx\|\geq-a.
\]
Hence, for \(0\leq s\leq1\),
\[
 \|(1-s)q+sx\|^2
 \geq (1-s)^2+s^2-2as(1-s)
 \geq\frac{1-a}{2}>0.
\]
If \(x_r\to x\notin E\) and
\(Q_r\to Q\neq\emptyset\), choose such a \(U\) containing \(x\)
and the tail of \((x_r)\), and put \(m=\sqrt{(1-a)/2}>0\).
For all large \(r\), we have \(x_r\in U\) and \(Q_r\neq\emptyset\).
For nonzero vectors \(y,z\) with \(\|y\|,\|z\|\geq m\),
\[
 \left\|\frac{y}{\|y\|}-\frac{z}{\|z\|}\right\|
 \leq \frac{2}{m}\|y-z\|.
\]
Applying this to the two numerators in \(\varphi\) gives, for
\(q_r\in Q_r\), \(q\in Q\), and \(s\in[0,1]\),
\[
 \|\varphi(x_r,q_r,s)-\varphi(x,q,s)\|
 \leq\frac{2}{m}
 \bigl((1-s)\|q_r-q\|+s\|x_r-x\|\bigr).
\]
Match each \(q_r\in Q_r\) with a point \(q\in Q\) satisfying
\(\|q_r-q\|\leq d_s(Q_r,Q)\), and conversely.  Keeping the same
parameter \(s\), the corresponding coning points have Euclidean
distance at most
\[
 \frac{2}{m}\bigl(d_s(Q_r,Q)+\|x_r-x\|\bigr)\longrightarrow0,
\]
uniformly in the points and in \(s\).  Since
\(\operatorname{dist}_s(p,q)\leq(\pi/2)\|p-q\|\) for
\(p,q\in\Sphere^n\), both suprema above tend to zero, proving
\(x_r*Q_r\to x*Q\) for nonempty bases.
At an empty base, \(Q_r=\emptyset\) eventually
because the empty set is isolated, and \(x_r*\emptyset=\{x_r\}\to\{x\}\).

\end{proof}

\subsection{The coning transform}

Set
\[
 \bigl(\mathcal T_{E,n_E}\lambda\bigr)(Q)
 :=
 \int_{\Sphere^n\setminus E}
 \sgn\langle x,n_E\rangle\,
 \lambda(x*Q)\,\dd\sigma_n(x).
\tag{7.1}\label{eq:cone-transform}
\]
For fixed \(Q\), Lemma~\ref{lem:coning-continuity} and continuity of
\(\lambda\) imply that \(x\mapsto\lambda(x*Q)\) is continuous on
\(\Sphere^n\setminus E\).  Extend this function by zero on the
\(\sigma_n\)-null set \(E\).  The resulting function is Borel
measurable and bounded, since \(\lambda\) is continuous on the
compact space \(\K(\Sphere^n)\)
(Lemma~\ref{lem:spherical-hyperspace-compactness}).  Its product with
\(\sgn\langle x,n_E\rangle\) is therefore absolutely integrable
with respect to the probability measure \(\sigma_n\), so
\eqref{eq:cone-transform} is well defined.  Equivalently,
\eqref{eq:cone-transform} is the signed hemispherical transform of
\(x\mapsto\lambda(x*Q)\), extended by zero on \(E\), evaluated at
\(n_E\) with normalized spherical measure;
see \cite[Section~3.2, equation~(3.3)]{RubinRadonNotes}.

\begin{proposition}\label{prop:cone-transform}
The map
\[
 \bigl(\mathcal T_{E,n_E}\lambda\bigr):\K(E)\longrightarrow\R
\]
is a continuous, simple, \(SO(W)\)-invariant valuation.
\end{proposition}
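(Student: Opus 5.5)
The plan is to verify the four properties of \(\mathcal T_{E,n_E}\lambda\) one at a time, each by reducing it to the corresponding property of \(\lambda\) applied pointwise in the apex \(x\), and then integrating.

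\emph{Valuation.} First, \(\mathcal T_{E,n_E}\lambda(\emptyset)=0\). Indeed, \(x*\emptyset=\{x\}\) has dimension \(0\leq n-1\), so simplicity of \(\lambda\) gives \(\lambda(\{x\})=0\) for every \(x\).

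Next, let \(Q_1,Q_2,Q_1\cup Q_2\in\K(E)\) and fix \(x\in\Sphere^n\setminus E\). I will show
\[
 x*(Q_1\cup Q_2)=(x*Q_1)\cup(x*Q_2),
 \qquad
 x*(Q_1\cap Q_2)=(x*Q_1)\cap(x*Q_2).
\]
The argument is the same as in Lemma~\ref{lem:suspension}, now using the direct sum \(\R^{n+1}=W\oplus\R x\). Every vector \(z\) decomposes uniquely as \(w+tx\) with \(w\in W\). It lies in \(\widehat Q+\R_{\geq0}x\) if and only if \(t\geq0\) and \(w\in\widehat Q\). Unions and intersections then pass through this description, including the empty-base convention \(\widehat\emptyset=\{0\}\). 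In particular \((x*Q_1)\cup(x*Q_2)\) is convex, so the valuation identity of \(\lambda\) applies for each \(x\). Integrating it against \(\sgn\langle x,n_E\rangle\,\dd\sigma_n(x)\) gives the identity for the transform; all integrands are bounded and measurable, as noted after \eqref{eq:cone-transform}.

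\emph{Simplicity.} If \(Q\neq\emptyset\) and \(\dim Q\leq n-2\), then \(\widehat Q+\R_{\geq0}x\) spans a space of dimension at most \(n\). Hence \(\dim(x*Q)\leq n-1\), and \(\lambda(x*Q)=0\) for all \(x\). The integral therefore vanishes.

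\emph{Invariance.} Take \(h\in SO(W)\) and extend it by the identity on \(W^\perp\) to \(g\in SO(n+1)\). Then \(g\) fixes \(n_E\) and preserves \(E\) and \(\sigma_n\), and \(g(x*Q)=(gx)*(hQ)\). Substituting \(x=gy\) and using invariance of \(\lambda\) gives \(\mathcal T(hQ)=\mathcal T(Q)\).

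\emph{Continuity.} This is the main obstacle, because Lemma~\ref{lem:coning-continuity} only gives convergence for apexes off \(E\), and that convergence is not uniform as \(x\) approaches \(E\). I will use dominated convergence instead. Let \(Q_r\to Q\) in \(\K(E)\). For each fixed \(x\notin E\), Lemma~\ref{lem:coning-continuity} with the constant sequence \(x_r=x\) gives \(x*Q_r\to x*Q\), so \(\lambda(x*Q_r)\to\lambda(x*Q)\) pointwise on \(\Sphere^n\setminus E\). The integrands are bounded by \(\sup_{\K(\Sphere^n)}|\lambda|\). This bound is finite because \(\lambda\) is continuous on a space that is compact by Lemma~\ref{lem:spherical-hyperspace-compactness}. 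Finally, \(E\) is \(\sigma_n\)-null. Dominated convergence then yields \(\mathcal T(Q_r)\to\mathcal T(Q)\). The empty set is isolated, so continuity there is automatic.

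Measurability of the integrand for each \(Q\) was already noted in the text. No uniformity near \(E\) is needed, so the apparent obstacle disappears.
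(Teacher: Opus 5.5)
Your proposal is correct and follows essentially the same route as the paper. You use the union and intersection identities for \(x*Q\) from the direct sum \(\R^{n+1}=W\oplus\R x\), integrated against the sign factor, together with dominated convergence via Lemma~\ref{lem:coning-continuity} and the compactness bound on \(\lambda\), the extension of \(h\) to a \(g\) fixing \(n_E\), and the dimension count \(\dim(x*Q)\leq n-1\) for simplicity.
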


\begin{proof}
Fix \(x\notin E\).  Since
\(\R^{n+1}=W\oplus\R x\), uniqueness of the direct-sum decomposition gives, whenever
\(Q_1,Q_2,Q_1\cup Q_2\in\K(E)\),
\[
\begin{aligned}
 x*(Q_1\cup Q_2)&=(x*Q_1)\cup(x*Q_2),\\
 x*(Q_1\cap Q_2)&=(x*Q_1)\cap(x*Q_2).
\end{aligned}
\]
The intersection identity also covers
\(Q_1\cap Q_2=\emptyset\): then
\(\widehat Q_1\cap\widehat Q_2=\{0\}\), and both sides equal
\(x*\emptyset=\{x\}\).
Applying the valuation identity for \(\lambda\), multiplying by
\(\sgn\langle x,n_E\rangle\), and integrating proves the
valuation identity for \(\bigl(\mathcal T_{E,n_E}\lambda\bigr)\).
Since \(\lambda\) is simple and \(x*\emptyset=\{x\}\), the transform also
vanishes at \(\emptyset\).

If \(Q_r\to Q\), Lemma~\ref{lem:coning-continuity} gives
\[
 \lambda(x*Q_r)\longrightarrow\lambda(x*Q)
\]
for every \(x\notin E\).  The uniform bound established above
allows us to apply dominated convergence, proving continuity of
\(\bigl(\mathcal T_{E,n_E}\lambda\bigr)\).

Every \(h\in SO(W)\) extends to \(g\in SO(n+1)\) by
\(g|_W=h\) and \(gn_E=n_E\).  We have
\(g(x*Q)=(gx)*(hQ)\), and
\[
 \sgn\langle gy,n_E\rangle
 =\sgn\langle y,g^{-1}n_E\rangle
 =\sgn\langle y,n_E\rangle.
\]
Since \(gE=E\), the change of variables \(x=gy\) in
\eqref{eq:cone-transform}, together with invariance of \(\sigma_n\)
and \(\lambda\), therefore gives
\[
 \bigl(\mathcal T_{E,n_E}\lambda\bigr)(hQ)
 =\bigl(\mathcal T_{E,n_E}\lambda\bigr)(Q).
\]
Thus the transform is \(SO(W)\)-invariant.  If
\(Q\neq\emptyset\) and
\(\dim Q\leq n-2\), then
\(\dim(x*Q)\leq n-1\) for \(x\notin E\).  Simplicity of \(\lambda\)
makes the integrand zero, so
\(\bigl(\mathcal T_{E,n_E}\lambda\bigr)\) is simple.
\end{proof}

\begin{corollary}\label{cor:transform-zero}
The valuation \(\bigl(\mathcal T_{E,n_E}\lambda\bigr)\) is identically
zero.
\end{corollary}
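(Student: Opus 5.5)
By Proposition~\ref{prop:cone-transform}, the transform \(\mathcal T_{E,n_E}\lambda\) is a continuous, simple, \(SO(W)\)-invariant valuation on \(\K(E)\), where \(E\cong\Sphere^{n-1}\). The hypothesis \((\mathrm N_{n-1})\), applied with \(W\) as the ambient space, will therefore give \(\mathcal T_{E,n_E}\lambda\equiv0\) once we verify the single remaining hypothesis
\[
 \bigl(\mathcal T_{E,n_E}\lambda\bigr)(E)=0.
\]
So the proof reduces to evaluating the transform on the full great hypersphere \(E\).

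For \(x\in\Sphere^n\setminus E\), we compute the cone
\[
 x*E=(W+\R_{\geq0}x)\cap\Sphere^n .
\]
Since \(x\notin W\), the cone \(W+\R_{\geq0}x\) is the closed linear halfspace \(\{y:\sgn\langle y,n_E\rangle\in\{0,\sgn\langle x,n_E\rangle\}\}\). Thus \(x*E\) is the closed hemisphere bounded by \(E\) that contains \(x\). This hemisphere contains the whole great hypersphere \(E\), and hence contains lines through the origin in its radial extension. It is therefore non-proper.

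By Corollary~\ref{cor:nonproper-zero}, which holds under our standing assumption \((\mathrm N_{n-1})\), we get \(\lambda(x*E)=0\) for every \(x\notin E\). The integrand in \eqref{eq:cone-transform} thus vanishes identically on \(\Sphere^n\setminus E\), and the transform is zero at \(Q=E\).

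Combining the two steps, all hypotheses of \((\mathrm N_{n-1})\) hold for \(\mathcal T_{E,n_E}\lambda\) on \(\K(E)\), so the transform is identically zero. The only potential subtlety is making sure that \((\mathrm N_{n-1})\) transfers from \(\Sphere^{n-1}\) to \(E=\Sphere(W)\). This follows by choosing an orientation-preserving linear isometry \(W\cong\R^n\), which conjugates \(SO(W)\) to \(SO(n)\) and preserves continuity, simplicity and the valuation property. I do not expect a real obstacle here: the main step is simply recognizing that \(x*E\) is a hemisphere and hence non-proper.
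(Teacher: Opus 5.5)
Your proposal is correct and follows the paper's proof essentially step for step. You show that \(x*E\) is a closed hemisphere, hence non-proper, so Corollary~\ref{cor:nonproper-zero} gives \(\bigl(\mathcal T_{E,n_E}\lambda\bigr)(E)=0\); combined with Proposition~\ref{prop:cone-transform}, \((\mathrm N_{n-1})\) on \(E\cong\Sphere^{n-1}\) then forces the transform to vanish identically.
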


\begin{proof}
For \(x\notin E\), the cone \(W+\R_{\geq0}x\) is a closed halfspace,
so \(x*E\) is a closed hemisphere.  Corollary~\ref{cor:nonproper-zero}
therefore gives \(\lambda(x*E)=0\), and hence
\[
 \bigl(\mathcal T_{E,n_E}\lambda\bigr)(E)=0.
\]
Identifying \(E=\Sphere(W)\) with \(\Sphere^{n-1}\), this is precisely
the condition of vanishing on the full sphere in
\((\mathrm N_{n-1})\).  Together with
Proposition~\ref{prop:cone-transform}, this shows that
\(\mathcal T_{E,n_E}\lambda\) satisfies all hypotheses of
\((\mathrm N_{n-1})\).  Hence \((\mathrm N_{n-1})\) gives
\(\bigl(\mathcal T_{E,n_E}\lambda\bigr)=0\).
\end{proof}

\subsection{Vanishing on simplices}

\begin{proposition}\label{prop:b-zero}
For all \(p_0,\ldots,p_{n-1}\in\Sphere^n\),
\[
 b_\lambda(p_0,\ldots,p_{n-1})=0.
\]
\end{proposition}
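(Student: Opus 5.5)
The idea is to identify $b_\lambda(p_0,\ldots,p_{n-1})$ with a value of the signed coning transform $\mathcal T_{E,n_E}\lambda$, and then invoke Corollary~\ref{cor:transform-zero}.

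First we dispose of the degenerate case. If $p_0,\ldots,p_{n-1}$ are linearly dependent, then for every $x$ the tuple $(x,p_0,\ldots,p_{n-1})$ is dependent. Hence $c_\lambda(x,p_0,\ldots,p_{n-1})=0$ by \eqref{eq:c-def}, and therefore $b_\lambda=0$ by \eqref{eq:b-def}.

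Now assume the $p_i$ are linearly independent. Put $W:=\operatorname{span}\{p_0,\ldots,p_{n-1}\}$, which is $n$-dimensional, and set $E=\Sphere(W)$. Choose the unit normal $n_E\in W^\perp$ so that $\det(n_E,p_0,\ldots,p_{n-1})>0$. Let $Q:=\pos(p_0,\ldots,p_{n-1})\cap\Sphere^n\in\K(E)$, a proper spherical $(n-1)$-simplex in $E$. For $x\notin E$, the vectors $x,p_0,\ldots,p_{n-1}$ are independent, and we have
\[
 \Delta(x,p_0,\ldots,p_{n-1})=\bigl(\R_{\geq0}x+\widehat Q\bigr)\cap\Sphere^n=x*Q.
\]
Moreover, writing $x=\pi_Wx+\langle x,n_E\rangle n_E$ and using multilinearity of the determinant (the $W$-component contributes zero), we get
\[
 \det(x,p_0,\ldots,p_{n-1})=\langle x,n_E\rangle\det(n_E,p_0,\ldots,p_{n-1}).
\]
Hence $\sgn\det(x,p_0,\ldots,p_{n-1})=\sgn\langle x,n_E\rangle$ for all $x\notin E$.

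The set $E$ is $\sigma_n$-null, so it may be discarded from the integral \eqref{eq:b-def}. Therefore
\[
 b_\lambda(p_0,\ldots,p_{n-1})
 =\int_{\Sphere^n\setminus E}\sgn\langle x,n_E\rangle\,\lambda(x*Q)\,\dd\sigma_n(x)
 =\bigl(\mathcal T_{E,n_E}\lambda\bigr)(Q),
\]
and this vanishes by Corollary~\ref{cor:transform-zero}.

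The only point needing care is the orientation bookkeeping, namely matching the determinant sign with $\sgn\langle x,n_E\rangle$ through the choice of $n_E$. Even if the orientation is chosen the other way, the transform merely changes sign, so the vanishing conclusion is unaffected. Everything else follows directly from the earlier results.
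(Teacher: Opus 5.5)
Your proposal is correct and matches the paper's proof: the same degenerate case, the same choice of \(W\), \(E\), \(Q\) and \(n_E\), the identification \(\Delta(x,p_0,\ldots,p_{n-1})=x*Q\) with \(\sgn\det(x,p_0,\ldots,p_{n-1})=\sgn\langle x,n_E\rangle\), and the appeal to Corollary~\ref{cor:transform-zero}. The only cosmetic difference is that you discard \(E\) as a \(\sigma_n\)-null set, whereas the paper notes that the integrand already vanishes there because the tuple is dependent; either justification suffices.
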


\begin{proof}
First suppose that \(p_0,\ldots,p_{n-1}\) are linearly independent.
Set
\[
 W=\operatorname{span}(p_0,\ldots,p_{n-1}),
 \qquad
 E=\Sphere(W),
 \qquad
 Q=\pos(p_0,\ldots,p_{n-1})\cap E.
\]
Then \(Q\in\K(E)\).  Choose \(n_E\) so that
\[
 \det(n_E,p_0,\ldots,p_{n-1})>0.
\]
For every \(x\notin E\),
\[
\begin{split}
 \Delta(x,p_0,\ldots,p_{n-1})&=x*Q,\\
 \sgn\det(x,p_0,\ldots,p_{n-1})
 &=\sgn\langle x,n_E\rangle.
\end{split}
\]
Indeed, the linear functional
\(x\mapsto\det(x,p_0,\ldots,p_{n-1})\) vanishes on \(W\), and it is
positive at \(n_E\); it is therefore a positive multiple of
\(\langle x,n_E\rangle\).
For \(x\in E\), the tuple
\((x,p_0,\ldots,p_{n-1})\) is linearly dependent, so the integrand in
\eqref{eq:b-def} vanishes.  Hence that integral may be restricted to
\(\Sphere^n\setminus E\).
Equations \eqref{eq:b-def} and \eqref{eq:cone-transform} therefore
give
\[
 b_\lambda(p_0,\ldots,p_{n-1})
 =
 \bigl(\mathcal T_{E,n_E}\lambda\bigr)(Q)=0
\]
by Corollary~\ref{cor:transform-zero}.

If \(p_0,\ldots,p_{n-1}\) are dependent, then
\((x,p_0,\ldots,p_{n-1})\) is dependent for every \(x\), so the
integrand in \eqref{eq:b-def} is zero by
Definition~\ref{def:simplex-cochain}.
\end{proof}

\begin{corollary}\label{cor:simplices-zero}
For all \(p_0,\ldots,p_n\in\Sphere^n\),
\[
 \lambda\bigl(\Delta(p_0,\ldots,p_n)\bigr)=0.
\]
\end{corollary}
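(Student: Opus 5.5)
The plan is to combine Lemma~\ref{lem:contraction} with Proposition~\ref{prop:b-zero}, and then remove the determinant sign that appears in the definition of \(c_\lambda\).

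First, fix \(p_0,\ldots,p_n\in\Sphere^n\). By Lemma~\ref{lem:contraction},
\[
 c_\lambda(p_0,\ldots,p_n)=\sum_{i=0}^{n}(-1)^i
 b_\lambda(p_0,\ldots,\widehat p_i,\ldots,p_n).
\]
Each term on the right is a value of \(b_\lambda\) on an \(n\)-tuple of points of \(\Sphere^n\), so it vanishes by Proposition~\ref{prop:b-zero}. Hence \(c_\lambda\) vanishes identically.

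Next, split into two cases according to the tuple.

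If \(p_0,\ldots,p_n\) are linearly independent, then \(\sgn\det(p_0,\ldots,p_n)=\pm1\). Definition~\ref{def:simplex-cochain} then gives
\[
 \lambda\bigl(\Delta(p_0,\ldots,p_n)\bigr)=\pm c_\lambda(p_0,\ldots,p_n)=0.
\]

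If the tuple is linearly dependent, the determinant sign is zero, so the cochain gives no information and a separate argument is needed. In this case \(\Delta(p_0,\ldots,p_n)\) lies in the linear span of the generators, which has dimension at most \(n\). So \(\Delta(p_0,\ldots,p_n)\) is either empty, which is impossible since it contains the \(p_i\), or has dimension at most \(n-1\). Simplicity of \(\lambda\) then gives the value zero.

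No step here is a real obstacle: the substantive work is already contained in Proposition~\ref{prop:b-zero}. The only point needing care is that the degenerate case must be handled by simplicity rather than through the cochain.
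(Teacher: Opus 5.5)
Your proposal is correct and is essentially the paper's own proof. You use Lemma~\ref{lem:contraction} with Proposition~\ref{prop:b-zero} to get \(c_\lambda=0\), read off the independent case from \eqref{eq:c-def}, and dispose of the dependent case by simplicity, since \(\Delta(p_0,\ldots,p_n)\) is nonempty of dimension at most \(n-1\).
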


\begin{proof}
Proposition~\ref{prop:b-zero} and
Lemma~\ref{lem:contraction} give \(c_\lambda=0\).  If the tuple is
linearly independent, \eqref{eq:c-def} gives the result.  If it is
dependent, \(\Delta(p_0,\ldots,p_n)\) has dimension at most \(n-1\) and
therefore has zero value by simplicity.
\end{proof}

\section{Completion of the induction}\label{sec:completion}

\subsection{The dimension step}

Corollary~\ref{cor:nonproper-zero} already handles every non-proper
set.  To complete the dimension step, it suffices to triangulate proper
polytopes and then approximate proper convex sets from within.

\begin{lemma}\label{lem:spherical-triangulation}
Every full-dimensional proper spherical polytope \(P\in\K(\Sphere^n)\)
admits a finite face-to-face triangulation into proper nondegenerate
spherical \(n\)-simplices.
\end{lemma}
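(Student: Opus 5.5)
The plan is to reduce to the Euclidean case by a gnomonic (central) projection and then triangulate there. Since \(P\) is proper, the cone \(\widehat P\) is pointed. By the strict separation already used in Section~\ref{sec:preliminaries}, there is \(u\in\Sphere^n\) with \(\langle u,x\rangle>0\) for every \(x\in P\). Let \(A=\{z\in\R^{n+1}:\langle u,z\rangle=1\}\), an affine hyperplane, and define
\[
 g:P\to A,\qquad g(x)=\frac{x}{\langle u,x\rangle}.
\]
Because \(\widehat P\) is a polyhedral pointed cone, we can write \(\widehat P=\pos(w_1,\ldots,w_r)\) with all \(\langle u,w_i\rangle>0\) (Minkowski--Weyl). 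Then \(\widehat P\cap A=\operatorname{conv}\{g(w_i/\|w_i\|)\}\) is a Euclidean convex polytope \(P'\). It has dimension \(n\) in \(A\) because \(\dim\widehat P=n+1\). The map \(g\) is a homeomorphism \(P\to P'\) with inverse \(z\mapsto z/\|z\|\). It sends spherical convex subsets to convex subsets, because both correspond to cones: for \(K\subset P\) we have \(\widehat K\cap A=g(K)\), and convexity of \(\widehat K\) is equivalent to convexity of \(\widehat K\cap A\) when \(\widehat K\setminus\{0\}\) lies in \(\{\langle u,\cdot\rangle>0\}\).

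Next, triangulate \(P'\) face-to-face by a standard Euclidean triangulation using only its vertices. For instance, use a pulling or placing triangulation, or the inductive one that cones from a fixed vertex \(v\) over triangulations of the facets not containing \(v\). The resulting \(n\)-simplices \(T_1,\ldots,T_N\) in \(A\) cover \(P'\) and are nondegenerate, with \(n+1\) affinely independent vertices. Any two of them meet in a common (possibly empty) face.

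Finally, transfer back. Set \(\Delta_k=\widehat{T_k}\cap\Sphere^n=\Delta(p_0,\ldots,p_n)\), where \(p_j\) are the normalized vertices of \(T_k\). Affine independence in \(A\) with \(0\notin A\) is equivalent to linear independence of the \(p_j\) in \(\R^{n+1}\). So each \(\Delta_k\) is a proper nondegenerate spherical \(n\)-simplex, and it is proper because \(\langle u,\cdot\rangle>0\) on it. Radial extension commutes with unions and intersections of subsets of \(A\): a ray from \(0\) meets \(A\) at most once. Hence \(\bigcup_k\Delta_k=P\), and \(\Delta_k\cap\Delta_l=\widehat{T_k\cap T_l}\cap\Sphere^n\) is the spherical simplex spanned by a common subset of vertices. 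Faces of the spherical simplices correspond to faces of the Euclidean ones, so the triangulation is face-to-face.

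The main obstacle is mostly bookkeeping: checking that the correspondence \(K\mapsto\widehat K\cap A\) preserves faces and intersections, including empty intersections, which correspond to the zero cone. The Euclidean triangulation of a convex polytope without new vertices is standard and will be quoted, for example from \cite{SchneiderConvexBodies} or any standard reference on polytopes. If a citation is to be avoided, the vertex-coning induction on dimension can be written out briefly.
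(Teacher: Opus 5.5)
Your proposal is correct and follows essentially the same route as the paper. You choose \(u\) with \(\langle u,\cdot\rangle>0\) on \(P\), pass to the \(n\)-dimensional polytope \(\widehat P\cap\{\langle u,z\rangle=1\}\) via the gnomonic map, take a face-to-face Euclidean triangulation, and transfer it back by radial projection, using that a ray meets the section at most once and that affine independence there is equivalent to linear independence; the only cosmetic difference is that you get boundedness of the section from a Minkowski--Weyl generating set, whereas the paper uses the bound \(\|z\|\leq1/\delta\) with \(\delta=\min_{p\in P}\langle u,p\rangle\).
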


\begin{proof}
Choose
\(u\in\Sphere^n\) such that \(\langle u,p\rangle>0\) for every
\(p\in P\), and set
\(\delta=\min_{p\in P}\langle u,p\rangle>0\).  The section
\[
 B=\widehat P\cap\{z:\langle u,z\rangle=1\}
\]
is a closed polyhedron.  If \(z\in B\), then \(z/\|z\|\in P\), so
\(1\geq\delta\|z\|\).  Thus \(B\) is bounded, hence a compact
polytope.  It has dimension \(n\), since \(\widehat P\) is
full-dimensional.  Radial projection is a homeomorphism
\[
 r:B\longrightarrow P,\qquad r(z)=z/\|z\|,
 \qquad r^{-1}(p)=\frac{p}{\langle u,p\rangle}.
\]
Up to translation, \(r^{-1}\) is the gnomonic projection; see
\cite[Section~4]{BesauSchuster}.
Choose a face-to-face triangulation of \(B\)
\cite[Proposition~2.2.4]{DeLoeraRambauSantos}, with maximal simplices
\(S_1,\ldots,S_N\).  The vertices of each \(S_j\) are linearly
independent: applying \(\langle u,\cdot\rangle\) to any linear
relation makes the sum of its coefficients zero, reducing it to an
affine relation.  Hence \(\Delta_j=r(S_j)\) is a nondegenerate
spherical \(n\)-simplex.  Injectivity gives
\[
 \Delta_i\cap\Delta_j=r(S_i\cap S_j).
\]
A face is the convex hull of a subset of the simplex vertices, so
radial projection carries it to the corresponding spherical face.
Thus the \(\Delta_j\) cover \(P\) and meet in common proper faces.
\end{proof}

\begin{proposition}\label{prop:dimension-raising}
For every \(n\geq2\),
\[
 (\mathrm N_{n-1})\quad\Longrightarrow\quad(\mathrm N_n).
\]
\end{proposition}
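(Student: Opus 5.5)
We fix \(\lambda\) as in Sections~\ref{sec:cochain}--\ref{sec:transform}: a continuous, simple, \(SO(n+1)\)-invariant valuation on \(\K(\Sphere^n)\) with \(\lambda(\Sphere^n)=0\), and we assume \((\mathrm N_{n-1})\). All the preceding results of those sections apply. We must show \(\lambda(K)=0\) for every \(K\in\K(\Sphere^n)\). The empty set is trivial. Non-proper sets are handled by Corollary~\ref{cor:nonproper-zero}. Proper sets of dimension at most \(n-1\) vanish by simplicity. It therefore remains to treat proper full-dimensional \(K\).

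First we treat proper full-dimensional spherical polytopes \(P\). By Lemma~\ref{lem:spherical-triangulation}, \(P\) has a face-to-face triangulation into nondegenerate simplices \(\Delta_1,\ldots,\Delta_N\). We will use Lemma~\ref{lem:finite-cuts} with \(\rho=\lambda\) rather than an inclusion–exclusion over the triangulation. The indicator identity \(\indicator_P-\sum_j\indicator_{\Delta_j}=0\) holds off the finite union of great hyperspheres spanned by the facets of the \(\Delta_j\). Indeed, any point of \(P\) outside these hyperspheres lies in the interior of exactly one \(\Delta_j\), since the simplices meet only in common proper faces. Any point outside \(P\) lies in no \(\Delta_j\). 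Lemma~\ref{lem:finite-cuts} then gives \(\lambda(P)=\sum_j\lambda(\Delta_j)\), and each term is zero by Corollary~\ref{cor:simplices-zero}.

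Next we treat a general proper full-dimensional \(K\), using approximation from within. The density argument of Lemma~\ref{lem:polytope-density} gives polytopes \(P_r=\pos F_r\cap\Sphere^n\subset K\) with \(P_r\to K\). Each \(P_r\) is proper because it lies in \(K\). It may fail to be full-dimensional, but then \(\lambda(P_r)=0\) by simplicity. Otherwise \(\lambda(P_r)=0\) by the previous step. Continuity of \(\lambda\) yields \(\lambda(K)=\lim\lambda(P_r)=0\). Hence \(\lambda\) vanishes identically, which is \((\mathrm N_n)\).

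The main obstacle is the gluing step for polytopes: justifying that the valuation is additive over the triangulation. We avoid an inclusion–exclusion over a non-lattice family by invoking Lemma~\ref{lem:finite-cuts}, so the only thing to check is the pointwise indicator identity off finitely many great hyperspheres. That check follows from the face-to-face property together with the fact that the \(\Delta_j\) cover \(P\).
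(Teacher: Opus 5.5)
Your proposal is correct and follows the paper's proof essentially step for step. You reduce to full-dimensional proper sets via Corollary~\ref{cor:nonproper-zero} and simplicity, triangulate a proper polytope with Lemma~\ref{lem:spherical-triangulation}, glue using Lemma~\ref{lem:finite-cuts} and Corollary~\ref{cor:simplices-zero}, and then approximate from within by the finite-net polytopes of Lemma~\ref{lem:polytope-density}. The paper does the same, with one difference in emphasis: it places the real difficulty in Corollary~\ref{cor:simplices-zero} (the cocycle and coning-transform argument), not in the gluing step you call the main obstacle.
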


\begin{proof}
Assume \((\mathrm N_{n-1})\), and let \(\lambda\) satisfy the
hypotheses of \((\mathrm N_n)\).  It vanishes on non-proper sets by
Corollary~\ref{cor:nonproper-zero}, and on lower-dimensional sets by
simplicity.  Hence only full-dimensional proper sets remain.

Let \(P\) be a full-dimensional proper spherical polytope.
By Lemma~\ref{lem:spherical-triangulation}, choose a face-to-face
triangulation \(P=\bigcup_{j=1}^N\Delta_j\) into proper nondegenerate
spherical \(n\)-simplices.  Every proper face of every \(\Delta_j\)
lies in the linear span of at most \(n\) vertices and hence in a great
hypersphere.  Away from the resulting finite union of great
hyperspheres, exactly one \(\Delta_j\) contains each point of \(P\),
and none contains a point outside \(P\).  Applying
Lemma~\ref{lem:finite-cuts} to this indicator identity and using
Corollary~\ref{cor:simplices-zero} yields
\[
 \lambda(P)=\sum_{j=1}^N\lambda(\Delta_j)=0.
\]
For a nonempty proper \(K\), the finite-net construction in
Lemma~\ref{lem:polytope-density} gives spherical polytopes
\(P_r\subset K\) with \(P_r\to K\).  They lie in the same open
hemisphere as \(K\), so remain proper.  Each has zero value by the
preceding argument or by simplicity if it is lower-dimensional.
Continuity gives \(\lambda(K)=0\); the empty set has value zero by
definition.  Thus \(\lambda=0\) on the full domain, proving
\((\mathrm N_n)\).
\end{proof}

\subsection{The spherical Hadwiger theorem}

\begin{proof}[Proof of Theorem~\ref{thm:main}]
Proposition~\ref{prop:circle} establishes \((\mathrm N_1)\).
Iterating Proposition~\ref{prop:dimension-raising} therefore yields
\((\mathrm N_n)\) for every \(n\geq1\).  The same base-case proposition
also gives \((\mathrm H_1)\).  Assuming \((\mathrm H_{n-1})\), the
established statement \((\mathrm N_n)\) and
Proposition~\ref{prop:reduction} imply \((\mathrm H_n)\).  Induction
establishes \((\mathrm H_n)\) in every dimension.

Now let \(\mu\) satisfy the hypotheses of the theorem, and write
\(\mu_p:=\mu|_{\K_p(\Sphere^n)}\).  The induction statement
\((\mathrm H_n)\) yields unique constants \(a_0,\ldots,a_n\) such that
\[
 \mu_p=\sum_{j=0}^n a_j V_j
 \qquad\text{on }\K_p(\Sphere^n).
\]
By Lemma~\ref{lem:algebraic-extension}, \(\mu_p\) has a unique
valuation extension to \(\K(\Sphere^n)\).  Both \(\mu\) and
\(\sum_{j=0}^n a_j V_j\) are such extensions, so uniqueness forces
\[
 \mu=\sum_{j=0}^n a_j V_j
 \qquad\text{on }\K(\Sphere^n).
\]
Lemma~\ref{lem:link-homeomorphism} and the properties of the conic
intrinsic volumes recorded in Section~\ref{sec:preliminaries} show that
each \(V_j\) is continuous and \(O(n+1)\)-invariant.  Uniqueness in
\((\mathrm H_n)\) gives their linear independence.  Finally, evaluation
on any great \(j\)-sphere \(S_j\), using \(V_k(S_j)=\delta_{kj}\), yields
\(a_j=\mu(S_j)\).  This completes the proof.
\end{proof}

\begin{corollary}\label{cor:spherical-form}
For every \(n\geq1\), restriction induces an isomorphism from the space
of continuous \(SO(n+1)\)-invariant valuations on \(\K(\Sphere^n)\)
onto the corresponding space on \(\K_p(\Sphere^n)\).  The valuations
\(V_0,\ldots,V_n\) form a basis of the former space, their restrictions
form a basis of the latter, and both spaces have dimension \(n+1\).
Moreover, every valuation in either space is \(O(n+1)\)-invariant.
\end{corollary}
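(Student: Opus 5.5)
The corollary is essentially a repackaging of Theorem~\ref{thm:main}, statement \((\mathrm H_n)\), and Lemma~\ref{lem:algebraic-extension}. I would establish it in four steps.

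\textbf{Restriction map.} Write \(\mathrm{Val}\) for the space of continuous \(SO(n+1)\)-invariant valuations on \(\K(\Sphere^n)\), and \(\mathrm{Val}_p\) for the corresponding space on \(\K_p(\Sphere^n)\). Let \(R:\mathrm{Val}\to\mathrm{Val}_p\) be restriction. It is linear and well defined, because restriction preserves continuity, invariance and the valuation identity. \(\K_p(\Sphere^n)\) is closed under intersections, and it contains any union that is proper.

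\textbf{Injectivity and surjectivity.} Injectivity follows from the uniqueness part of Lemma~\ref{lem:algebraic-extension}. If \(R\mu=0\), then both \(\mu\) and \(0\) are valuation extensions of the zero valuation on \(\K_p(\Sphere^n)\), so \(\mu=0\).

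For surjectivity, take \(\nu\in\mathrm{Val}_p\). Statement \((\mathrm H_n)\), which is established in the proof of Theorem~\ref{thm:main}, writes \(\nu=\sum_j a_jV_j\) on \(\K_p(\Sphere^n)\). The right-hand side, regarded on all of \(\K(\Sphere^n)\), lies in \(\mathrm{Val}\). Indeed, each \(V_j\) is continuous by Lemma~\ref{lem:link-homeomorphism} together with continuity of the \(v_{j+1}\), and it is an \(O(n+1)\)-invariant valuation. Hence \(\nu\) lies in the image of \(R\).

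The only subtlety here is that continuity of the extension is not proved by Lemma~\ref{lem:algebraic-extension} in general; Section~\ref{sec:extension} proves it only for simple valuations. I bypass this by using the explicit formula \(\sum_j a_jV_j\) instead of the abstract extension.

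\textbf{Bases and dimension.} By Theorem~\ref{thm:main}, every \(\mu\in\mathrm{Val}\) equals \(\sum_j\mu(S_j)V_j\), so \(V_0,\ldots,V_n\) span \(\mathrm{Val}\). They are linearly independent: evaluating a relation \(\sum_j b_jV_j=0\) at \(S_k\) and using \(V_j(S_k)=\delta_{jk}\) gives \(b_k=0\). So they form a basis and \(\dim\mathrm{Val}=n+1\).

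Because \(R\) is an isomorphism, it carries this basis to a basis of \(\mathrm{Val}_p\). Thus the restrictions of \(V_0,\ldots,V_n\) form a basis there and \(\dim\mathrm{Val}_p=n+1\). This agrees with the uniqueness in \((\mathrm H_n)\).

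\textbf{\(O(n+1)\)-invariance.} Every element of either space is a linear combination of the \(V_j\), or of their restrictions. Each \(V_j\) is \(O(n+1)\)-invariant, since \(v_{j+1}(g\widehat K)=v_{j+1}(\widehat K)\) for \(g\in O(n+1)\) and \(\widehat{gK}=g\widehat K\). For the proper domain, note also that \(g\) maps \(\K_p(\Sphere^n)\) to itself. Hence every valuation in either space is \(O(n+1)\)-invariant.
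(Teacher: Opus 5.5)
Your proposal is correct and follows essentially the same route as the paper. Injectivity comes from the uniqueness in Lemma~\ref{lem:algebraic-extension}, surjectivity from extending the explicit combination \(\sum_j a_jV_j\) given by \((\mathrm H_n)\), and the basis, dimension, and \(O(n+1)\)-invariance assertions are read off from the properties of the \(V_j\).
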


\begin{proof}
Lemma~\ref{lem:algebraic-extension} gives injectivity of the restriction map.
For surjectivity, \((\mathrm H_n)\) expresses every valuation on
\(\K_p(\Sphere^n)\) as a linear combination of the restrictions of
\(V_0,\ldots,V_n\), and the same linear combination on
\(\K(\Sphere^n)\) is a continuous invariant extension.  The basis and
dimension statements follow, and the \(O(n+1)\)-invariance of each
\(V_j\) yields the final assertion.
\end{proof}

\section{The conic Hadwiger theorem}\label{sec:conic-classification}

The restriction \(d\geq2\) is essential:
\(SO(1)\) is trivial, and an \(SO(1)\)-invariant conic valuation may
distinguish the two rays in \(\R\).

\begin{corollary}[Conic Hadwiger theorem]\label{cor:conic-form}
Let \(d\geq2\), and let \(\Phi:\C_d\to\R\) be a continuous,
not necessarily normalized, \(SO(d)\)-invariant conic valuation.  Then
\[
 \Phi(C)=\sum_{k=0}^{d}\Phi(L_k)v_k(C)
 \qquad(C\in\C_d).
\]
Here \(L_k\) is any \(k\)-dimensional linear subspace of \(\R^d\), with
\(L_0=\{0\}\).  Since \(SO(d)\) acts transitively on the
\(k\)-dimensional linear subspaces of \(\R^d\), the value \(\Phi(L_k)\)
is independent of this choice.
Moreover:
\begin{enumerate}
\renewcommand{\labelenumi}{\textup{(\roman{enumi})}}
\setlength{\itemsep}{2pt}
\setlength{\topsep}{3pt}
\item The valuations \(v_0,\ldots,v_d\) form a basis; hence the space has
dimension \(d+1\).
\item Every such conic valuation is \(O(d)\)-invariant.
\item The normalized subspace is
\(\operatorname{span}\{v_1,\ldots,v_d\}\) and has dimension \(d\).
\end{enumerate}
\end{corollary}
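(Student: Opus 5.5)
The plan is to transport \(\Phi\) to the sphere, apply Theorem~\ref{thm:main}, and handle the zero cone separately. I would split \(\Phi\) into a normalized part and a multiple of \(v_0\). Set \(c:=\Phi(\{0\})\) and \(\Psi:=\Phi-c\,v_0\). Then \(\Psi\) is a continuous \(SO(d)\)-invariant conic valuation, since \(v_0\) is a continuous \(O(d)\)-invariant conic valuation, and it satisfies \(\Psi(\{0\})=c-c\,v_0(\{0\})=0\).

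Next I define \(\mu:\K(\Sphere^{d-1})\to\R\) by \(\mu(K):=\Psi(\widehat K)\), with \(\widehat\emptyset=\{0\}\). Then \(\mu(\emptyset)=0\) holds because \(\Psi\) is normalized. The cone--sphere correspondence is a bijection that preserves unions, intersections and convexity of unions, since \(\widehat{K\cup L}=\widehat K\cup\widehat L\) and \(\widehat{K\cap L}=\widehat K\cap\widehat L\), so \(\mu\) satisfies the valuation identity whenever \(K,L,K\cup L\in\K(\Sphere^{d-1})\). Continuity of \(\mu\) follows from Lemma~\ref{lem:link-homeomorphism}, and \(SO(d)\)-invariance is immediate because \(g\widehat K=\widehat{gK}\).

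Theorem~\ref{thm:main} with \(n=d-1\geq1\) then gives \(\mu=\sum_{j=0}^{d-1}\mu(S_j)V_j\). For a nonzero cone \(C\), setting \(K=C\cap\Sphere^{d-1}\) yields \(\Psi(C)=\sum_{j}\Psi(L_{j+1})v_{j+1}(C)\), because \(\widehat{S_j}=L_{j+1}\). The same identity holds at \(C=\{0\}\), since both sides vanish there: \(v_k(\{0\})=0\) for \(k\geq1\). Adding back \(c\,v_0\) gives \(\Phi=c\,v_0+\sum_{k=1}^d\Psi(L_k)v_k\). To identify the coefficients, I compute \(\Psi(L_k)=\Phi(L_k)-c\,v_0(L_k)=\Phi(L_k)\) for \(k\geq1\), using \(v_0(L_k)=\delta_{0k}\). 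Since \(c=\Phi(L_0)\), this is the stated formula. Independence of the choice of \(L_k\) follows from transitivity of \(SO(d)\) on \(k\)-subspaces, which uses \(d\geq2\).

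For the consequences, linear independence in (i) follows by evaluating \(\sum b_kv_k=0\) on \(L_j\) and using \(v_k(L_j)=\delta_{kj}\). Together with the representation, this gives a basis and dimension \(d+1\). Statement (ii) follows because each \(v_k\) is \(O(d)\)-invariant. For (iii), a combination \(\sum b_kv_k\) is normalized exactly when \(b_0=\sum_k b_kv_k(\{0\})=0\).

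The only delicate point is the treatment of the zero cone. The spherical theorem forces \(\mu(\emptyset)=0\), which is why I subtract \(c\,v_0\) first. The argument must also check that convexity of \(C\cup D\) corresponds exactly to \(K\cup L\in\K(\Sphere^{d-1})\), including cases where one of the sets is empty. This holds because the union of a nonzero cone with \(\{0\}\) is that cone.
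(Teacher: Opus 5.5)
Your proposal is correct and follows the paper's proof essentially step for step: subtract \(\Phi(\{0\})v_0\) to normalize, transport \(\Psi\) to \(\K(\Sphere^{d-1})\) via the cone--sphere correspondence and Lemma~\ref{lem:link-homeomorphism}, apply Theorem~\ref{thm:main}, check the zero cone separately, and read off (i)--(iii) from \(v_j(L_k)=\delta_{jk}\) and \(v_k(\{0\})=\delta_{0k}\). The differences are cosmetic. You take the coefficients \(\mu(S_j)=\Psi(L_{j+1})\) directly from Theorem~\ref{thm:main}, whereas the paper introduces generic coefficients and evaluates them on \(L_k\). Also, the hypothesis \(d\geq2\) is really needed to invoke the theorem with \(n=d-1\geq1\), not for transitivity on \(k\)-subspaces, which is trivial when \(d=1\).
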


\begin{proof}
Set \(\gamma=\Phi(\{0\})\) and \(\Psi=\Phi-\gamma v_0\).
Since \(v_0(\{0\})=1\), the valuation \(\Psi\) is continuous,
normalized, and \(SO(d)\)-invariant.  The cone--sphere correspondence
preserves intersections and admissible unions and is a homeomorphism
by Lemma~\ref{lem:link-homeomorphism}.  Hence
\(\mu(K):=\Psi(\widehat K)\) is a continuous \(SO(d)\)-invariant
valuation on \(\K(\Sphere^{d-1})\), including
\(\mu(\emptyset)=0\).  Theorem~\ref{thm:main}, together with
\[
 V_{k-1}(C\cap\Sphere^{d-1})=v_k(C)
 \qquad(C\neq\{0\},\ 1\leq k\leq d),
\]
gives
\[
 \Psi=\sum_{k=1}^d b_kv_k
 \qquad\text{on }\C_d;
\]
the equality also holds at the zero cone because both sides vanish
there.  Evaluating on \(L_k\), for \(k\geq1\), yields
\(b_k=\Psi(L_k)=\Phi(L_k)\), since \(v_0(L_k)=0\).  Therefore
\[
 \Phi=\gamma v_0+\sum_{k=1}^d\Phi(L_k)v_k
      =\sum_{k=0}^d\Phi(L_k)v_k.
\]

The identities \(v_j(L_k)=\delta_{jk}\) give linear independence,
so \(v_0,\ldots,v_d\) form a basis.  At the zero cone, only \(v_0\)
is nonzero, which identifies the normalized subspace.  Finally,
\(O(d)\)-invariance follows from that of the intrinsic volumes.
\end{proof}

\section*{Acknowledgements}

This work is supported by the National Natural Science Foundation of
China (Grant No.~12571350) and the Guangdong Basic and Applied Basic
Research Foundation (Grant No.~2025A1515010457).

\section*{Declaration on the Use of AI}

The overall strategy and structure of the proof of the main theorem were proposed
by the authors. OpenAI Codex contributed substantially to the development of the mathematical
arguments in this manuscript.

Its contributions included suggestions and draft arguments concerning
the extension to non-proper spherical convex
sets in Section~3, the inductive reduction in Section~4, the construction
and analysis of the continuous simplex cocycle and its averaged primitive
in Sections~5--6, the signed coning transform in Section~7, the completion
of the induction in Section~8, and the conic formulation in Section~9.
Codex also assisted with identifying gaps and points requiring clarification,
organizing and typesetting the manuscript, and editing the English.

The authors critically examined, revised, and independently verified
all AI-assisted definitions, statements, arguments, and citations.  They
made all final mathematical and editorial decisions and take full
responsibility for the correctness and integrity of the manuscript.

\end{document}